\numberwithin{equation}{section}
\newtheorem{theorem}{Theorem}[section]
\newtheorem{lemma}[theorem]{Lemma}
\newtheorem{proposition}[theorem]{Proposition}
\newtheorem{corollary}[theorem]{Corollary}
\newtheorem{example}[theorem]{Example}
\newtheorem{remark}[theorem]{Remark}
\newcommand{\E}{\mathbb{E}}
\renewcommand{\P}{\mathbb{P}}
\newcommand{\R}{\mathbb{R}}
\renewcommand{\S}{\mathbb{S}}
\newcommand{\1}{\mathds{1}}
\DeclareMathOperator{\argmax}{argmax}
\DeclareMathOperator{\Corr}{Corr}
\DeclareMathOperator{\Cov}{Cov}
\DeclareMathOperator{\detr}{detr}
\DeclareMathOperator{\diag}{diag}
\DeclareMathOperator{\sgn}{sgn}
\DeclareMathOperator{\spn}{span}
\DeclareMathOperator{\Sym}{Sym}
\DeclareMathOperator{\tr}{tr}
\DeclareMathOperator{\Unif}{Unif}
\DeclareMathOperator{\Var}{Var}
\DeclareMathOperator{\Vol}{Vol}
\newcommand{\cond}{{\,|\,}}
\newcommand{\dd}{\mathrm{d}}
\renewcommand{\vec}{\mathrm{vec}}
\newcommand{\bc}{b_\mathrm{cri}}
\newcommand{\tc}{\theta_\mathrm{cri}}
\newcommand{\tlc}{\theta_\mathrm{cri}'}
\renewcommand{\Mc}{M_\mathrm{cri}}
\newcommand{\Msupp}{M_\mathrm{supp}}
\newcommand{\Ptube}{\mathbb{P}_{\mathrm{tube}}}
\newcommand{\G}{\widetilde G}
\newcommand{\HH}{\widetilde H}
\newcommand{\RR}{\widetilde R}
\newcommand{\g}{\widetilde g}
\newcommand{\h}{\widetilde h}
\newcommand{\uu}{\widetilde u}
\newcommand{\vv}{\widetilde v}
\newcommand{\ww}{\widetilde w}
\newcommand{\X}{\check X}
\newcommand{\sigmaM}{{\sigma}M}
\newcommand{\Hf}{L}
\newcommand{\Xmax}{X_{\mathrm{max}}}
\newcommand{\Ymax}{Y_{\mathrm{max}}}
\newcommand{\hu}{\underline h}
\title[]{The volume-of-tube method for Gaussian random fields with inhomogeneous variance}
\author{Satoshi Kuriki}
\address{The Institute of Statistical Mathematics, 10-3 Midoricho, Tachikawa, Tokyo 190-8562, Japan}
\email{kuriki@ism.ac.jp}
\author{Akimichi Takemura} 
\address{The Center for Data Science Education and Research, 
Shiga University, 1-1-1 Banba, Hikone, Shiga 522-8522, Japan}
\email{a-takemura@biwako.shiga-u.ac.jp}
\author{Jonathan E.~Taylor}
\address{Department of Statistics, Sequoia Hall, 390 Jane Stanford Way, Stanford University, Stanford, CA 94305-4020, USA}
\email{jonathan.taylor@stanford.edu}
\begin{document}

\begin{abstract}
The tube method or the volume-of-tube method approximates the tail probability of the maximum of a smooth Gaussian random field with zero mean and unit variance.
This method evaluates the volume of a spherical tube about the index set,
and then transforms it to the tail probability.
In this study, we generalize the tube method to a case in which the variance is not constant.
We provide the volume formula for a spherical tube with a non-constant radius in terms of curvature tensors, and the tail probability formula of the maximum of a Gaussian random field with inhomogeneous variance, as well as its Laplace approximation.
In particular, the critical radius of the tube is generalized for evaluation of the asymptotic approximation error.
As an example, we discuss the approximation of the largest eigenvalue distribution of the Wishart matrix with a non-identity matrix parameter.
The Bonferroni method is the tube method when the index set is a finite set.
We provide the formula for the asymptotic approximation error for the Bonferroni method when the variance is not constant.
\end{abstract}
\keywords{
Bonferroni method,
Euler characteristic,
Kac-Rice test,
tail probability,
Weyl's tube formula,
Wishart matrix.
}

\maketitle

\section{Introduction: Tube with non-constant radius}

Let $M\subset\S^{n-1}$ be a $d$-dimensional $C^2$-closed submanifold of the ($n-1$)-dimensional unit sphere $\S^{n-1}=S(\R^n)=\{u\in\R^n \mid \Vert u\Vert=1\}$.
We assume that the linear-hull of $M$ is $\R^n$, and $M$ is a proper subset of $\S^{n-1}$ (that is, $d< n-1$).
Define a Gaussian random field on $M$ by
\begin{equation}
\label{Xu}
 X(u) = \sigma(u) \langle u,\xi\rangle, \quad u\in M\subset\S^{n-1},
\end{equation}
where $\xi\in\R^n$ is a random vector distributed as the standard Gaussian distribution $\mathcal{N}_n(0,I_n)$, $\langle \cdot, \cdot \rangle$ denotes the standard inner product of $\R^n$, and $\sigma(\cdot)$ is a positive $C^2$-function on $M$.
Then, $X(\cdot)$ is a mean-zero Gaussian random field on $M$ with variance $\Var(X(u))=\sigma(u)^2$ and the correlation function $\Corr(X(u),X(v))=\langle u,v\rangle$.
(\ref{Xu}) is the canonical representation of a Gaussian random field with a finite Karhunen-Lo\`eve expansion and a smooth sample path.

The purpose of this study is to provide a formula for approximating the tail probability of the maximum
\begin{equation}
\label{tailprob}
 \P\biggl( \max_{u\in M} X(u) > c \biggr) \quad \mbox{when $c$ is large}.
\end{equation}
When the variance is constant, say, $\sigma(u)\equiv 1$,
the recipes to approximate the tail probability (\ref{tailprob}) are well established and known as the volume-of-tube method, or simply, the tube method (\cite{knowles-siegmund:1989},\cite{johansen-johnstone:1990},\cite{sun:1993},\cite{kuriki-takemura:2001,kuriki-takemura:2009},\cite{takemura-kuriki:2002,takemura-kuriki:2003}).
The tail probability of (\ref{tailprob}) corresponds to the $p$-value of a max-type statistical test and appears in various statistical scenarios,
such as construction of simultaneous confidence bands, 
likelihood ratio test in singular models,
change point problems,
order-restricted statistical inference,
projection pursuit/testing Gaussianity, and so on
(e.g., \cite{naiman:1986,naiman:1990},\cite{sun:1990},\cite{kuriki-takemura:2008},\cite{kato-kuriki:2013},\cite{lu-kuriki:2017}).
The tube method provides sufficiently accurate $p$-value formulas for such problems.
 
In this study, we extend the tube formula to Gaussian random fields with inhomogeneous variance.
For later purposes, we define a standardized random field on $M$ by
\[
 Y(u)=\sigma(u)\bigl\langle u,\xi/\Vert\xi\Vert\bigr\rangle,\ \ u\in M,\ \ \xi\sim\mathcal{N}_n(0,I_n).
\]
We sometimes use the abbreviations
\[
 \Xmax = \max_{u\in M} X(u), \quad \Ymax = \max_{u\in M} Y(u).
\]
Note that the support of $\Ymax$ is $|\Ymax|\le \max_{u\in M}\sigma(u)$.
Because $\Vert\xi\Vert$ and $\xi/\Vert \xi\Vert$ are independently distributed,
we have
\begin{equation}
\label{PT}
 \P\bigl(\Xmax > c\bigr) = \E\biggl[\P\biggl(\Ymax > \frac{c}{\Vert \xi\Vert} \mid \Vert \xi\Vert\biggr)\biggr].
\end{equation}
Moreover, because $\xi/\Vert\xi\Vert\sim\Unif(\S^{n-1})$, the uniform distribution on $\S^{n-1}$,
\begin{align}
\label{PU}
 \P\bigl(\Ymax > b\bigr)
&= \P\biggl(\max_{u\in M} \sigma(u) \langle u,\xi/\Vert \xi\Vert\rangle > b\biggr) \\
&= \P\Bigl(\exists u\in M,\,\langle u,\xi/\Vert \xi\Vert\rangle > b/\sigma(u)\Bigr) \nonumber \\
&= \frac{1}{\Omega_n}\Vol_{n-1}\bigl((M)_{b/\sigma(\cdot)}\bigr), \nonumber
\end{align}
where
\begin{align}
\label{tube}
 (M)_{f}
=& \bigl\{ w\in \S^{n-1} \mid \exists u\in M,\,\langle u,w\rangle > f(u) \bigr\},
\end{align}
$\Vol_{n-1}(\cdot)$ is the ($n-1$)-dimensional volume, and
\begin{equation}
\label{Omega} 
 \Omega_n = \Vol_{n-1}(\S^{n-1}) = \frac{2\pi^{n/2}}{\Gamma(n/2)}
\end{equation}
is the volume of ($n-1$)-dimensional unit sphere.
Further, we observe that the distribution of $\Xmax$ is derived from that of $\Ymax$ in (\ref{PU}), which is proportional to the volume of the spherical set $(M)_{b/\sigma(\cdot)}$ in (\ref{tube}).
(It is shown that the relation between $\P\bigl(\Ymax > \cdot\bigr)$ and $\P\bigl(\Xmax > \cdot\bigr)$, i.e., (\ref{PT}), is essentially the Laplace/inverse Laplace transforms, and hence one-to-one.)

\begin{figure}[ht]
\begin{center}
\begin{tabular}{ccccc}
\scalebox{0.45}{\includegraphics{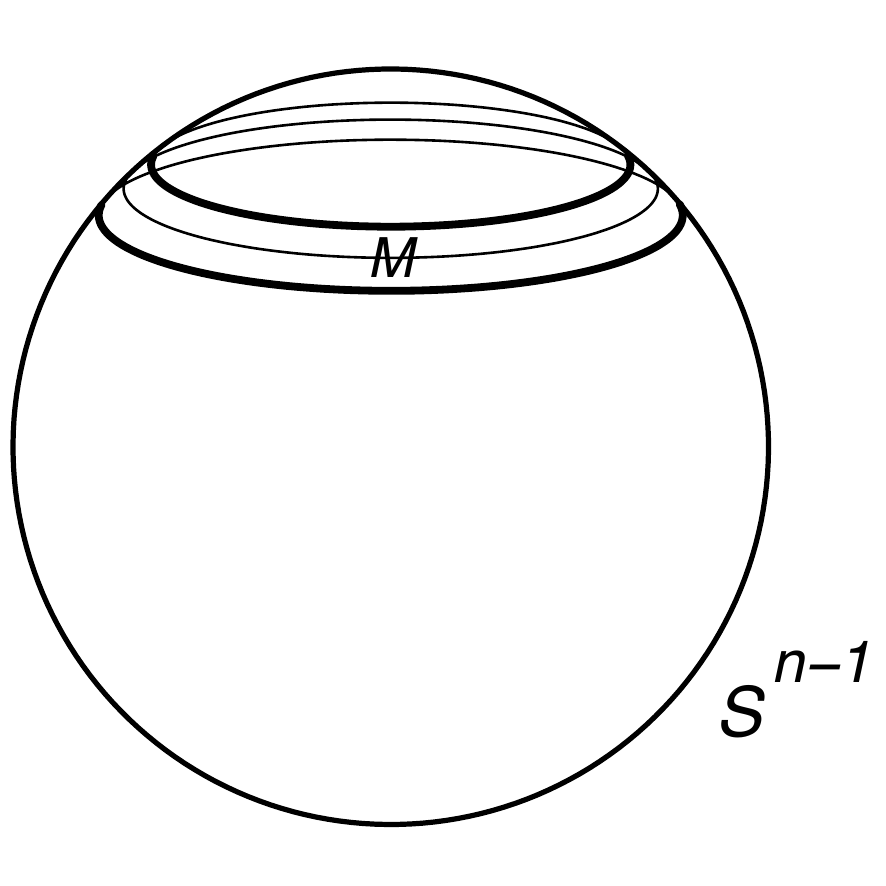}}
&&
\scalebox{0.45}{\includegraphics{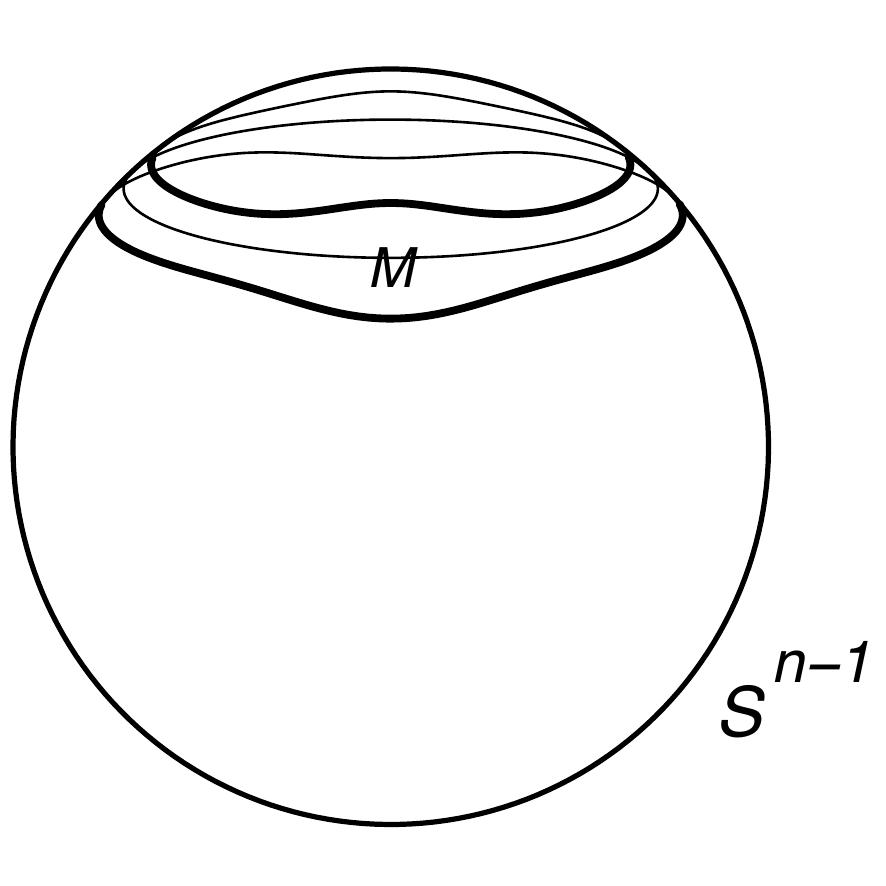}}
&&
\scalebox{0.45}{\includegraphics{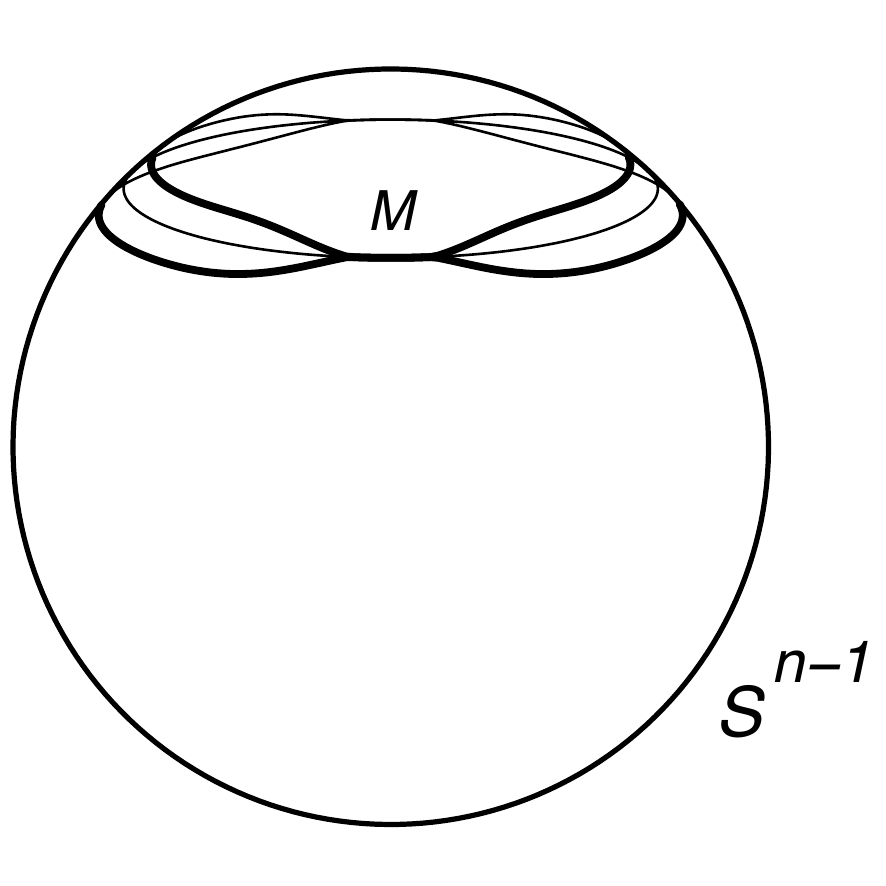}}
\end{tabular}
\caption{Tubes $(M)_{f}$.\newline
(left: homogeneous variance, center and right: inhomogeneous variance)
}
\label{fig:tubes}
\end{center}
\end{figure}

Figure \ref{fig:tubes} depicts examples of $(M)_{f}$ with $f(\cdot)=b/\sigma(\cdot)$.
$(M)_{f}$ is a tube-like area about the set $M$.
When $\sigma(u)$ is constant, the width of $(M)_{f}$ is constant,
and its volume can be evaluated using the tube-volume formula by
H.~Hotelling \cite{hotelling:1939} and H.~Weyl \cite{weyl:1939}.
The original tube method for constant variance is used to evaluate the volume of (\ref{tube}) via the tube-volume formula and thereafter transforms it into the distribution of $\Xmax$ by (\ref{PU}) (see \cite{sun:1993},\cite{kuriki-takemura:2001}).
As in the right panel of Figure \ref{fig:tubes},
the radius of tube may vanish.
We call this phenomenon the degeneracy of tube.
We will see later the condition that such degeneration occurs.
In this study, we will extend Weyl's tube-volume formula and derive the distribution of $\Xmax$ in a similar manner.

The tube treated here is spherical on the unit sphere.
The volume of a tube with non-constant radius when the ambient space is $\R^n$ is discussed under different motivations
 (e.g., \cite{roccaforte:2013},\cite{rosmarin:2015}).

To approximate the tail probabilities of the maxima of random fields,
R.~Adler and K.~Worsley founded a methodology based on the Euler-Poincar\'e characteristic of the excursion set (\cite{adler-hasofer:1976,adler:1981},\cite{worsley:1995}).
Their method is referred to as the expected Euler characteristic heuristic or the Euler characteristic method.
Later, this method has been developed and generalized in theoretical and application aspects (e.g., \cite{taylor-adler:2003,taylor-adler:2009}, \cite{taylor:2006}, \cite{taylor-worsley:2006,taylor-worsley:2013}).
For comprehensive treatments of this method, we refer to \cite{adler-taylor:2007} and \cite{adler-taylor:2011}.
\cite{takemura-kuriki:2002} showed that the tube method and the Euler characteristic method provide the same formula for the tail probability in (\ref{tailprob}) with $\sigma(u)\equiv 1$.

As a related problem, \cite{taylor-etal:2016} formulated a test statistic for a global null hypothesis in a regularized regression
as the maximum of a Gaussian random field with inhomogeneous variance:

\begin{equation}
\label{TLT}
 \max_{\eta\in K}\langle\eta,\xi\rangle, \quad \xi\sim \mathcal{N}_n(0,I_n),
\end{equation}
where $K$ is a closed, convex, stratified submanifold of $\R^n$.
In our notation, (\ref{TLT}) is
$\max_{u\in M}X(u)$ with $X(u)=\sigma(u)\langle u,\xi\rangle$, $M=\{\eta/\Vert\eta\Vert\mid \eta\in K\}$ and $\sigma(u)=\Vert\eta\Vert$ for $u=\eta/\Vert\eta\Vert$.
They derived the conditional distribution of the maximum when  $\eta^*=\argmax_{\eta\in K}\langle\eta,\xi\rangle$ is given,
and constructed a so-called Kac-Rice test based on the conditional distribution.
Differently from \cite{taylor-etal:2016}, we treat the unconditional distribution of the maximum.
Another difference is the assumption of convexity.
In this paper, $M$ is a closed submanifold and not (geodesically) convex.
We will examine the effects of the non-convexity of the set
\begin{equation}
\label{sigmaM}
 \sigmaM = \bigl\{ \sigma(u)u \in\R^n \mid u\in M \bigr\}.
\end{equation}
It will be shown that only the supporting points of $\sigmaM$ make contributions to the volume of tube formula, and the degeneracy of the tube (the right panel of Figure \ref{fig:tubes}) occurs at the non-supporting points of $\sigmaM$.

The outline of the paper is as follows.
The main results are summarized in Section \ref{sec:main}.
We present the approximate formulas for the tail probability of $\Ymax$ and $\Xmax$.
To evaluate their approximation error, 
we generalize the notion of the critical radius of a tube.
Consequently, based on the resulting formula, we provide its Laplace approximation under the assumption that the set
\begin{equation}
\label{M0}
 M_0 = \Bigl\{ u\in M \mid \sigma(u) = \max_{u\in M}\sigma(u) \Bigr\}
\end{equation}
forms a closed $C^2$-manifold.
In addition, we discuss the Bonferroni method, which is the tube method when the index set is finite.
We provide the formula for the asymptotic approximation error for the Bonferroni method when the variance is not constant.
Examples are provided in Section \ref{sec:examples}.
One of them is the tail probability of the largest eigenvalue of the Wishart matrix when the parameter matrix is not an identity matrix, which will be of independent interest.
All proofs for the main results are provided in Section \ref{sec:proofs}.

\section{Main Results}
\label{sec:main}

\subsection{Preliminary and notations}

The index set $M$ is treated as a $C^2$-submanifold endowed with the metric induced by the ambient space $\R^n$.
Suppose that each point $u\in M$ is represented by the local coordinates $t=(t^i)_{1\le i\le d}$ as $u=\varphi(t)$.
We use conventions $\partial/\partial t^i=\partial_i$,
$\varphi_i=\partial_i\varphi$,
$\varphi_{ij}=\partial_i\partial_j\varphi$, \textit{etc}.
The tangent space of $M$ at $u\in M$ is
\[
 T_u(M) = \spn\{\varphi_i \mid 1\le i\le d \}.
\]
The metric tensor is
$g_{ij} = \langle\varphi_i,\varphi_j\rangle$,
and the affine connection is
$\Gamma_{ij,k} = \langle\varphi_{ij},\varphi_k\rangle= (1/2)(\partial_i g_{jk} + \partial_j g_{ik} - \partial_k g_{ij})$.
The volume element of $M$ at $u\in M$ is $\det(G)^{\frac{1}{2}}\prod_i \dd t^i|_u$, where $G=(g_{ij})_{1\le i,j\le d}$ is the metric tensor in $d\times d$ matrix form.
When needed, we write $G=G_u$ to indicate the point $u\in M$ where the metric is defined.
The covariant derivative in the direction of $\partial/\partial t^i$ is denoted by $\nabla_i$.
The inverse of the metric matrix $G$ is denoted by $G^{-1}=(g^{ij})_{1\le i,j\le d}$.
Let
\begin{equation}
\label{R}
 R_{ij;kl} = \partial_i\Gamma_{jk,l} - \partial_j\Gamma_{ik,l}
 + \sum_{\alpha,\beta=1}^d (\Gamma_{ik,\alpha}\Gamma_{jl,\beta} - \Gamma_{il,\alpha}\Gamma_{jk,\beta}) g^{\alpha\beta}
\end{equation}
be the curvature tensor.

Define the normal space of $M$ at $u$ by
\begin{equation}
\label{Nu}
 N_u = T_u(M)^\perp\cap T_u(\S^{n-1}).
\end{equation}
Note that
\[
 \R^n = T_u(\R^n) = \spn\{u\}\oplus T_u(M) \oplus N_u,
\]
where $\oplus$ is the orthogonal direct sum.

Let
\begin{equation}
\label{ell}
 \ell = \ell(u) = \log\sigma(u),
\end{equation}
and let
\begin{align}
\label{C}
& C = C_u = (c_{ij})_{1\le i,j\le d}, \\ 
& c_{ij} = - \nabla_i\nabla_j\ell + \nabla_i\ell \nabla_j\ell
= -\biggl(\partial_i\partial_j\ell -\sum_{k,l=1}^d \Gamma_{ij,k}\partial_l\ell g^{kl}\biggr) + (\partial_i\ell)(\partial_j\ell). \nonumber
\end{align}

The gradient of $\ell(u)$ in (\ref{ell}) in the tangent space $T_u(M)$ is
\[
 \nabla\ell = \nabla\ell(u) = \nabla|_{T_u(M)}\ell(u) = \sum_{i,j=1}^d \nabla_i\ell\varphi_j g^{ij}.
\]
The squared length of the gradient is
$\Vert\nabla\ell\Vert^2 = \Vert\nabla\ell(u)\Vert^2 = \sum_{i,j=1}^d (\nabla_i\ell)(\nabla_j\ell) g^{ij}$.

The $k$th elementary symmetric function of the eigenvalues of a matrix is denoted by $\tr_k(\cdot)$ (\cite{muirhead:1982}) or $\detr_k(\cdot)$ (\cite{worsley:1995}).
Note that $\tr_1(\cdot)=\tr(\cdot)$, and $\tr_d(A)=\det(A)$ if $A$ is $d\times d$, and let $\tr_0(\cdot)=1$.
We use the relation
\[
 \det(I_d+A)=\sum_{k=0}^d \tr_k(A).
\]

For a finite set $I=\{ i_1,\ldots,i_n \}$ ($i_1<\cdots<i_n$), let
\begin{equation}
\label{SI}
 \Sigma(I) = \{ (\pi_1,\ldots,\pi_n) \mid \{\pi_1,\ldots,\pi_n\}=I \}
\end{equation}
be the set of all permutations of elements of $I$.
For $I$ such that $n=|I|$ is even, let
\begin{equation}
\label{PI}
 \Pi(I) = \bigl\{ (\pi_1,\ldots,\pi_n)\in \Sigma(I) \mid \pi_{1}<\pi_{2},\,\pi_{3}<\pi_{4},\,\ldots,\pi_{n-1}<\pi_{n} \bigr\}
\end{equation}
be the set of ordered parings of $I$.
Let $\Sigma_n=\Sigma(\{1,\ldots,n\})$ and $\Pi_n=\Pi(\{1,\ldots,n\})$.

\subsection{Volume of tube formula}

To state main theorems, we introduce the notion of self-overlap of tube.
Figure \ref{fig:tubes} depicts tube $(M)_{b/\sigma(\cdot)}$.
Suppose we let $b$ be small.
Subsequently, the width of the tube becomes large, and therefore causing self-overlap.
More formally, $(M)_{b/\sigma(\cdot)}$ is said to have a self-overlap if there exists a point $w\in (M)_{b/\sigma(\cdot)}$ such that the function $u\mapsto\sigma(u)\langle u,w\rangle$ has two local maxima at the points $u^*,u^{**}\in M$ and
$\sigma(u^*)\langle u^*,w\rangle=\sigma(u^{**})\langle u^{**},w\rangle$.
The threshold $b=\bc$ causing such overlap is defined by
\begin{equation}
\label{critical}
 \bc = \sup\bigl\{ b\ge 0 \mid (M)_{b/\sigma(\cdot)} \mbox{ has a self-overlap} \bigr\}.
\end{equation}

For $x\ge 0$, let
\[
 \bar B_{a,b}(x) =
\begin{cases}
 \displaystyle
 \frac{\Gamma(a+b)}{\Gamma(a)\,\Gamma(b)}\int_x^1 t^{a-1}(1-t)^{b-1}\,\dd t
 & (0\le x\le 1), \\
 0 & (x>1),
\end{cases}
\]
be the upper probability of the beta distribution with parameter $(a,b)$.
Now, we state the following main results.
\begin{theorem}[Volume of tube] 
\label{thm:volume}
Let $\dd u$ be the volume element of $M$ at $u\in M$.
Let $\bc$ be the threshold defined in (\ref{critical}).
For $b \ge \bc$,
\begin{align}
\label{PU1}
\P\bigl(\Ymax > b\bigr)
=& \frac{1}{\Omega_n}\Vol_{n-1}\bigl((M)_{b/\sigma(\cdot)}\bigr) \\
=& \sum_{e=0}^d
 \frac{1}{(2\pi)^{e/2} \Omega_{d-e+1}} \int_M \frac{\det(I_d+C_u G_u^{-1})}
 {(1+\Vert\nabla\ell(u)\Vert^2)^{\frac{1}{2}(d-e+1)}} \nonumber \\
& \qquad\qquad
 \times \bar B_{\frac{1}{2}(d-e+1),\frac{1}{2}(n-d+e-1)}
   \biggl(\frac{1+\Vert\nabla\ell(u)\Vert^2}{\sigma(u)^2} b^2\biggr)\,\zeta_e(u)\,\dd u,
\nonumber
\end{align}
where $\Omega_k$ is defined in (\ref{Omega}),
$G_u$ is the metric matrix at $u$, $C_u$ is defined in (\ref{C}),
\begin{equation}
\label{zeta}
 \zeta_e(u) =
\begin{cases}
\displaystyle
 \sum_{I\subset\{1,\ldots,d\},\,|I|=e} \frac{1}{(e/2)!} \sum_{\pi,\tau\in \Pi(I)}
 \sgn(\pi,\tau)\,
 \RR_{\pi_1 \pi_2}^{\tau_1 \tau_2}\cdots \RR_{\pi_{e-1} \pi_e}^{\tau_{e-1} \tau_e}\,
  & (\mbox{if $e$ is even}), \\
0 & (\mbox{if $e$ is odd}),
\end{cases}
\end{equation}
with
\begin{equation}
\label{tildeR}
 \RR_{ij}^{kl} =
 \RR_{ij}^{kl}(u) =
 \sum_{\alpha,\beta=1}^d \Bigl(R_{ij;\alpha\beta} - \bigl(g_{i\alpha}g_{j\beta}-g_{i\beta}g_{j\alpha}\bigr)\Bigr)\g^{\alpha k}\g^{\beta l},
\end{equation}
$\g^{ij}$ the $(i,j)$th element of the inverse matrix $\G_u^{-1}$ of $\G_u=G_u+C_u$,
\[
 \sgn(\pi,\tau) = \sgn
 \begin{pmatrix}\pi_1 & \cdots & \pi_e \\ \tau_1 & \cdots & \tau_e \end{pmatrix}
\]
the sign of permutation.
In particular, $\zeta_0(u)=1$ and
$\zeta_2(u) = \frac{1}{2}\sum_{i,j=1}^d \RR_{ij}^{ij}(u)$.

Moreover, (\ref{PU1}) holds even if the range $M$ of the integral is replaced with
\begin{equation}
\label{Mcri} 
 \Mc = \biggl\{ u\in M \mid 
   \frac{1+\Vert\nabla\ell(u)\Vert^2}{\sigma(u)^2} \bc^2 < 1 \biggr\}.
\end{equation}
\end{theorem}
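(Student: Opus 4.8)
The plan is to compute $\Vol_{n-1}((M)_{b/\sigma(\cdot)})$ appearing in (\ref{PU}) by parametrizing the tube through the point of $M$ at which the maximum $\max_{u\in M}\sigma(u)\langle u,w\rangle$ is attained, in the spirit of Weyl's computation but adapted to the non-constant radius. First I would rewrite $\max_{u\in M}\sigma(u)\langle u,w\rangle=\max_{y\in\sigmaM}\langle y,w\rangle$, the support function of the set $\sigmaM$ in (\ref{sigmaM}), so that $(M)_{b/\sigma(\cdot)}=\{w\in\S^{n-1}\mid\max_{u\in M}\sigma(u)\langle u,w\rangle>b\}$. By the definition (\ref{critical}) of $\bc$, for $b\ge\bc$ there is no self-overlap, so for almost every $w$ in the tube the maximum is attained at a unique $u^{*}=u^{*}(w)\in M$, a non-degenerate local (hence global) maximum of $u\mapsto\sigma(u)\langle u,w\rangle$. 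I would then introduce fibered coordinates $\Phi(u,r,\bar\nu)$ over $u\in M$, with fiber a ``height'' $r\in(b,r_{\max}(u))$ and a direction $\bar\nu$ in the unit sphere $S(N_u)$ of the normal space (\ref{Nu}), and reduce $\Vol_{n-1}$ to the integral of the Jacobian of $\Phi$ over the region where $u$ is genuinely the maximizer.

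The stationarity condition $\langle\varphi_i+\partial_i\ell\,\varphi,w\rangle=0$ together with $\Vert w\Vert=1$ forces
\[
 w=\Phi(u,r,\bar\nu)=\frac{r}{\sigma(u)}\bigl(u-\nabla\ell(u)\bigr)+\sqrt{1-\tfrac{r^{2}}{\sigma(u)^{2}}\bigl(1+\Vert\nabla\ell(u)\Vert^{2}\bigr)}\;\bar\nu ,
\]
with $\langle u,w\rangle=r/\sigma(u)$; this already explains the appearance of $\nabla\ell$ and of $1+\Vert\nabla\ell(u)\Vert^{2}$. Writing $a=r/\sigma(u)$ and $\nu=\Vert\nu\Vert\bar\nu$, a direct computation of the Hessian of $u\mapsto\sigma(u)\langle u,w\rangle$ at the stationary point, involving $\nabla_i\nabla_j\ell$ and the $N_u$-valued second fundamental form $\mathrm{II}_{ij}$ of $M\subset\S^{n-1}$, shows that this Hessian equals $-\sigma(u)\,a\,(\G_u-\langle\mathrm{II},\nu/a\rangle)$ with $\G_u=G_u+C_u$ and $C_u$ as in (\ref{C}); hence $u$ is a local maximum exactly on the region $\{\G_u\succeq\langle\mathrm{II},\nu/a\rangle\}$. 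The main computational step is then the Jacobian of $\Phi$ in orthonormal frames: differentiating in the $t$-, $r$- and $\bar\nu$-directions produces, after simplification, the factor $\det(I_d+C_uG_u^{-1})$, a power $\Vert\nu\Vert^{\,n-2-d}$ from the normal sphere, and $\det(\G_u-\langle\mathrm{II},\nu/a\rangle)/\det(\G_u)$, with the power $(1+\Vert\nabla\ell(u)\Vert^{2})^{-\frac12(d-e+1)}$ emerging once the fiber integral is organised by the degree $e$ in $\nu$.

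It then remains to integrate out $\bar\nu\in S(N_u)$ and $r$. Expanding $\det(\G_u-\langle\mathrm{II},\nu/a\rangle)$ and collecting the part homogeneous of degree $e$ in $\nu$, one integrates the resulting product of $e$ second fundamental forms against $\Vert\nu\Vert$-powers over the unit sphere $S(N_u)$; antipodal symmetry kills odd $e$, and for even $e$ the pairing of the $\mathrm{II}$'s together with the Gauss equation $R_{ij;kl}=\langle\mathrm{II}_{ik},\mathrm{II}_{jl}\rangle-\langle\mathrm{II}_{il},\mathrm{II}_{jk}\rangle+(g_{ik}g_{jl}-g_{il}g_{jk})$ turns it into the intrinsic tensor $\RR_{ij}^{kl}$ of (\ref{tildeR}), hence into $\zeta_e(u)$ of (\ref{zeta}) (the subtracted $g_{i\alpha}g_{j\beta}-g_{i\beta}g_{j\alpha}$ being the curvature of $\S^{n-1}$); since $M$ is closed there are no boundary contributions. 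The remaining integral in $r$, equivalently in $\Vert\nu\Vert^{2}$, from $r=b$ upward is, after the change of variable, an incomplete beta integral, producing $\bar B_{\frac12(d-e+1),\frac12(n-d+e-1)}$ evaluated at $1-\Vert\nu\Vert^{2}\big|_{r=b}=\frac{1+\Vert\nabla\ell(u)\Vert^{2}}{\sigma(u)^{2}}b^{2}$; assembling these pieces gives (\ref{PU1}).

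Finally, the ``Moreover'' clause is immediate from the formula already proved: for $b\ge\bc$ and $u\in M\setminus\Mc$, the definition (\ref{Mcri}) gives $\frac{1+\Vert\nabla\ell(u)\Vert^{2}}{\sigma(u)^{2}}b^{2}\ge\frac{1+\Vert\nabla\ell(u)\Vert^{2}}{\sigma(u)^{2}}\bc^{2}\ge1$, and since $\bar B_{a,b}(x)=0$ for $x\ge1$ every $\bar B$-factor in (\ref{PU1}) vanishes there, so the integrand is $0$ on $M\setminus\Mc$ and the range of integration may be shrunk to $\Mc$. I expect the main obstacle to be the geometric bookkeeping in the two middle steps — the Hessian/Jacobian identity that isolates $\G_u=G_u+C_u$, and the normal-sphere integration via the Gauss equation producing $\RR_{ij}^{kl}$ and the vanishing of odd and boundary terms — which generalises, but follows the pattern of, the classical Weyl tube computation; by contrast the remaining ingredients (stationarity, the beta integral, and the vanishing off $\Mc$) are routine.
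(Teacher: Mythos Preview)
Your proposal is correct and follows essentially the same route as the paper: parametrize the tube by the maximizer $u$ together with a normal direction, compute the Jacobian (which produces $\det(I_d+C_uG_u^{-1})$ and the factor $\det(\G_u + (s/r)H_u(v))/\det(\G_u)$), integrate the second-fundamental-form terms over the normal sphere via the Gauss equation to obtain $\zeta_e(u)$, and recognize the remaining radial integral as an incomplete beta function, with the $\Mc$ clause following from $\bar B_{a,b}(x)=0$ for $x\ge 1$. The paper's only technical variation is that it lifts from $\S^{n-1}$ to $\R^n$ equipped with the standard Gaussian density, so that the normal-sphere integral becomes a Gaussian moment computation (packaged as a Wick-type determinant lemma) and the $(r,s)$-integral against $e^{-\|z\|^2/2}$ produces the beta function directly; this also makes the non-negativity of the Jacobian (needed to drop the absolute value) transparent via the second-order condition at the maximizer.
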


\begin{remark}[Weyl's tube formula]
When $\sigma(u)\equiv 1$, (\ref{PU1}) becomes
\begin{align*}
\P\bigl(\Ymax > b\bigr)
=& \frac{1}{\Omega_n}\Vol_{n-1}\bigl((M)_b\bigr) \\
=& \sum_{e=0}^d
 \frac{1}{(2\pi)^{e/2} \Omega_{d-e+1}} \int_M  
\,\zeta_e(u)\,\dd u \times \bar B_{\frac{1}{2}(d-e+1),\frac{1}{2}(n-d+e-1)}
   \bigl(b^2\bigr),
\end{align*}
where $\zeta_e(u)$ is defined by (\ref{zeta}) with
\[
 \RR_{ij}^{kl} =
 \sum_{\alpha,\beta=1}^d R_{ij;\alpha\beta} g^{\alpha k}g^{\beta l}- \bigl(\delta_i^k\delta_j^l-\delta_i^l\delta_j^k\bigr),
 \qquad \delta_i^j = \begin{cases} 1 & (i=j), \\ 0 & (i\ne j). \end{cases}
\]
This is original Weyl's tube formula \cite{weyl:1939}.
\end{remark}

The theorem below characterizes the threshold $\bc$.
This is also useful for numerical calculation (Section \ref{sec:examples}).

\begin{theorem}
\label{thm:critical}
The threshold $\bc$ appearing in Theorem \ref{thm:volume} is
\begin{equation}
\label{critical1}
 \bc^2 = \sup_{u\in M} \bc'(u)^2, \qquad \bc'(u)^2 = \sup_{w\in M\setminus\{u\}} \frac{\sigma(u)^2}{1+h(u,w)},
\end{equation}
where
\begin{equation}
\label{h}
h(u,w) =
 \Vert\nabla\ell(u)\Vert^2 + \biggl( \frac{\{ \sigma(u)/\sigma(w)
   - \langle w,u-\nabla\ell(u)\rangle\}_+}{\Vert P_u^\perp w \Vert}\biggr)^2.
\end{equation}
Here $x_+ = \max\{x,0\}$, and
$P_u^\perp$ is the orthogonal projection in $\R^n$ onto the subspace $N_u$ defined in (\ref{Nu}).
That is,
\[
 \Vert P_u^\perp w \Vert^2 = 1 - \langle u,w\rangle^2 -\sum_{i,j=1}^d \langle\varphi_i,w\rangle\langle\varphi_j,w\rangle g^{ij}.
\]
\end{theorem}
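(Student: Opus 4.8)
The plan is to unpack the definition \eqref{critical} of $\bc$ directly in terms of the geometry of the tube $(M)_{b/\sigma(\cdot)}$, translating the abstract self-overlap condition into an explicit condition on pairs of points $u,w\in M$. The starting observation is that a self-overlap at threshold $b$ means there is a point $z\in\S^{n-1}$ for which the function $v\mapsto\sigma(v)\langle v,z\rangle$ attains two distinct local maxima, say at $u$ and at a competing point; equivalently (after stereographic-type bookkeeping on the sphere), the two tube-rays emanating from $u$ and from that competing point meet at $z$. Following the constant-variance tube method (\cite{sun:1993},\cite{kuriki-takemura:2001}), the natural move is to fix the point $u\in M$ that realizes one of the local maxima, ask which $z\in\S^{n-1}$ lie in the ``normal slice'' of the tube at $u$ at radius $b/\sigma(u)$, and then compute the smallest $b$ for which such a $z$ also satisfies $\sigma(w)\langle w,z\rangle\ge\sigma(u)\langle u,z\rangle=b$ for some other $w\in M$. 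This gives an inner supremum over $w$ for each fixed $u$, hence the nested-supremum structure of \eqref{critical1}. The quantity $\bc'(u)$ is exactly the critical radius contribution localized at $u$.

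The core computation is the explicit parametrization of the normal slice. For the inhomogeneous field, the first-order stationarity condition $\nabla_i(\sigma(v)\langle v,z\rangle)|_{v=u}=0$ reads $\langle\varphi_i,z\rangle=-\langle u,z\rangle\,\nabla_i\ell(u)$ after using $\langle\varphi_i,u\rangle=0$, so the tangential components of $z$ are pinned down by $\nabla\ell(u)$; this is the source of the $u-\nabla\ell(u)$ combination in \eqref{h}. Decomposing $z$ along the orthogonal splitting $\R^n=\spn\{u\}\oplus T_u(M)\oplus N_u$, writing $z = \alpha u + (\text{tangential part determined by }\nabla\ell) + \beta\,\hat n$ with $\hat n\in N_u$ a unit normal, and imposing $\|z\|=1$ together with $\sigma(u)\langle u,z\rangle=b$, one solves for $\alpha,\beta$ and finds that $b^2$ relates to the geometry via a factor $1+\|\nabla\ell(u)\|^2$ in the denominator — matching the $\bar B$-argument in Theorem \ref{thm:volume}. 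Then the constraint ``$z$ also competes against $w$'' becomes $\sigma(w)\langle w,z\rangle\ge b$, i.e. $\langle w,z\rangle\ge b/\sigma(w)$; substituting the parametrized $z$, expanding $\langle w,z\rangle$ in terms of $\langle w,u\rangle$, $\langle w,\nabla\ell(u)\rangle$, and $\langle \hat n,w\rangle=\langle P_u^\perp w,\hat n\rangle$, and optimizing over the choice of unit normal $\hat n$ (Cauchy--Schwarz gives $\|P_u^\perp w\|$ in the denominator) produces precisely the expression $h(u,w)$ in \eqref{h}. The positive-part $\{\cdot\}_+$ arises because, if $\sigma(u)/\sigma(w)-\langle w,u-\nabla\ell(u)\rangle\le 0$, the competitor $w$ already beats $u$ at a point on the ``$u$-side'' of the slice, so no extra radius is needed and the second term drops; the supremum over $w$ then yields $\bc'(u)^2=\sup_{w\ne u}\sigma(u)^2/(1+h(u,w))$, and the outer supremum over $u$ gives $\bc^2$.

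The main obstacle I expect is making the ``reduce to a fixed $u$'' step rigorous: one must verify that the supremum in \eqref{critical} — taken over all $b$ for which \emph{some} overlap configuration exists — is genuinely recovered by first fixing one of the two competing local-maximizers and optimizing the competitor, rather than missing configurations where both maximizers move simultaneously, or boundary cases where $w\to u$. Handling $w\to u$ requires a second-order (curvature) analysis showing that near-coincident competitors contribute a bounded amount governed by the principal curvatures of $\sigmaM$, so they do not dominate the supremum unless the extrinsic geometry degenerates; this is the analogue of the ``local'' versus ``global'' critical radius dichotomy in \cite{kuriki-takemura:2001}. A secondary technical point is the correct bookkeeping of the projection formula for $\|P_u^\perp w\|^2$: since $P_u^\perp$ projects onto $N_u=T_u(M)^\perp\cap T_u(\S^{n-1})$, one subtracts from $\|w\|^2=1$ both the $\spn\{u\}$-component $\langle u,w\rangle^2$ and the $T_u(M)$-component $\sum_{i,j}\langle\varphi_i,w\rangle\langle\varphi_j,w\rangle g^{ij}$, which is exactly the displayed identity; this must be threaded consistently through the optimization over $\hat n$. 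Once these points are settled, the formula \eqref{critical1}--\eqref{h} follows by assembling the pieces.
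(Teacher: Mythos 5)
Your proposal is correct and follows essentially the same route as the paper's proof: fix $u$, parametrize the normal slice via the first-order stationarity condition (which produces the vector $u-\nabla\ell(u)$), translate the ``no competitor beats $u$'' condition into a bound on the ratio of normal to radial components, optimize over the unit normal direction by Cauchy--Schwarz to produce $\Vert P_u^\perp w\Vert$, and handle the sign of $\sigma(u)/\sigma(w)-\langle w,u-\nabla\ell(u)\rangle$ via the positive part (the case where it is nonpositive forcing the slice above level $b$ to be empty, so the second term of $h$ drops). The paper makes the ``fix $u$ first'' reduction rigorous exactly as you anticipate, by partitioning $\R^n$ up to a Lebesgue-null set (Sard's theorem) into the regions $V_u$ on which $u$ is the unique maximizer, and it does not in fact need a separate second-order analysis of $w\to u$ for this theorem, since that limit is already absorbed in the infimum over $w\in M\setminus\{u\}$.
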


\begin{remark}[Critical radius]
\label{rem:critical}
When $\sigma(u)\equiv 1$, (\ref{critical1}) becomes
\[
 \bc = \cos\tc = \sup_{u\in M}\cos\tlc(u), \quad \mbox{where}\ \ %
  \tan\tlc(u) = \inf_{w\in M\setminus\{u\}} \frac{1-\langle w,u\rangle}{\Vert P_u^\perp w\Vert}.
\]
$\tc$ and $\tlc(u)$ are referred to as the global and local critical radius of the tube about $M$, respectively.
They are positive when $M$ is a $C^2$-closed manifold (e.g., \cite[Remark 3.1]{kuriki-takemura:2001}).
We generalize the local critical radius $\tlc(u)$ to the inhomogeneous variance case by
\[
 \tan^2\tlc(u) = \inf_{w\in M\setminus\{u\}}h(u,w).
\]
Then,
\[
 \bc^2 = \sup_{u\in M}\bigl[\sigma(u)^2\cos^2\tlc(u)\bigr].
\]
\end{remark}

In the formula (\ref{PU1}), if $b$ is larger than $\sigma_0=\max_{u\in M}\sigma(u)$, then the formula holds trivially as $0=0$.
Therefore, Theorem \ref{thm:volume} is meaningless unless $\bc<\sigma_0$ holds as follows.

\begin{proposition}
\label{prop:validity}
The threshold $\bc$ in Theorem \ref{thm:volume} satisfies
\[
 \bc < \sigma_0 = \max_{u\in M}\sigma(u).
\]
\end{proposition}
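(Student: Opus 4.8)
\textit{Proof proposal.}
The plan is to argue by contradiction using the description of $\bc$ in Theorem~\ref{thm:critical}. Write $\sigma_0=\max_{u\in M}\sigma(u)$. Since $\sigma(u)^2\le\sigma_0^2$ and $h(u,w)\ge 0$, formula \eqref{critical1} gives $\bc^2\le\sigma_0^2$ for free, so the content is strictness. Suppose $\bc^2=\sigma_0^2$ and pick $(u_k,w_k)$ with $w_k\ne u_k$ and $\sigma(u_k)^2/(1+h(u_k,w_k))\to\sigma_0^2$. Because each term is $\le\sigma_0^2$, this forces both $\sigma(u_k)\to\sigma_0$ and $h(u_k,w_k)\to 0$. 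By compactness of $M$, pass to a subsequence with $u_k\to u_*$ and $w_k\to w_*$; then $\sigma(u_*)=\sigma_0$, so $u_*\in M_0$. Consequently $\nabla\ell(u_*)=0$, and since $u_*$ is an interior local maximum of $\ell=\log\sigma$ on the boundaryless compact manifold $M$, the covariant Hessian is negative semidefinite, i.e.\ $C_{u_*}=-\nabla\nabla\ell(u_*)\succeq 0$ by \eqref{C}. I would then split into the cases $w_*\ne u_*$ and $w_*=u_*$.

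In the ``far'' case $w_*\ne u_*$, set $N_k=\sigma(u_k)/\sigma(w_k)-\langle w_k,u_k-\nabla\ell(u_k)\rangle$. Letting $k\to\infty$ (and using continuity of $\sigma$ and of $\nabla\ell$), $N_k\to\sigma_0/\sigma(w_*)-\langle w_*,u_*\rangle\ge 1-\langle w_*,u_*\rangle>0$, since $\sigma_0/\sigma(w_*)\ge 1$ and $\langle w_*,u_*\rangle<1$ for distinct unit vectors. As $P_u^\perp$ is an orthogonal projection, $\Vert P_{u_k}^\perp w_k\Vert\le\Vert w_k\Vert=1$, so $h(u_k,w_k)\ge\bigl(\{N_k\}_+/\Vert P_{u_k}^\perp w_k\Vert\bigr)^2\ge\{N_k\}_+^2$ stays bounded away from $0$, contradicting $h(u_k,w_k)\to 0$.

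The ``near'' case $w_*=u_*$ is where the real work is. I would fix a local chart at $u_*$, write $u_k=\varphi(t_k)$, $w_k=\varphi(t_k+s_k)$ with $s_k\to 0$, $s_k\ne 0$, and Taylor-expand to second order. Using $\langle\varphi_i,\varphi\rangle=0$, $\langle\varphi_{ij},\varphi\rangle=-g_{ij}$ and the identity $\sum_{\alpha\beta}(\partial_\alpha\ell)g^{\alpha\beta}\Gamma_{ij,\beta}=\partial_i\partial_j\ell-\nabla_i\nabla_j\ell$, the constant and linear terms of $N_k$ cancel and the definition \eqref{C} collapses the quadratic part to
\[
 N_k=\tfrac12\,s_k^{\top}\bigl(G_{u_k}+C_{u_k}\bigr)s_k+o(|s_k|^2),
\]
the remainder being uniform in $k$ because $\varphi,\ell\in C^2$ on the compact $M$; similarly $P_{u_k}^\perp w_k=\tfrac12\sum_{i,j}(P_{u_k}^\perp\varphi_{ij})\,s_k^is_k^j+o(|s_k|^2)$, so $\Vert P_{u_k}^\perp w_k\Vert\le C''|s_k|^2$ with $C''$ independent of $k$. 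Since $G$ is continuous and positive definite, $G_u\succeq\lambda_0 I$ near $u_*$ with $\lambda_0>0$, and $C_{u_k}\to C_{u_*}\succeq 0$, hence $G_{u_k}+C_{u_k}\succeq\tfrac12\lambda_0 I$ for all large $k$, giving $N_k\ge c|s_k|^2>0$ for some $c>0$ (in particular $\Vert P_{u_k}^\perp w_k\Vert>0$). Then $h(u_k,w_k)\ge\bigl(N_k/\Vert P_{u_k}^\perp w_k\Vert\bigr)^2\ge(c/C'')^2>0$ for all large $k$, contradicting $h(u_k,w_k)\to 0$ once more. As both cases are impossible, $\bc^2<\sigma_0^2$, hence $\bc<\sigma_0$.

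The main obstacle is exactly the near case: one must show that the numerator of $h$ does not vanish faster than $\Vert P_{u_k}^\perp w_k\Vert$ as $w_k\to u_k$, and this rests on the positive definiteness of $G+C$ at the maximizer $u_*$ — which is precisely where it matters that $M_0$ consists of interior maxima of $\ell$, forcing $C_{u_*}\succeq 0$ — together with the uniformity of the Taylor remainders over the compact index set. (The degenerate configuration $\Vert P_u^\perp w\Vert=0$ with positive numerator makes $h=+\infty$ and contributes $0$ to the supremum in \eqref{critical1}, so it may be disregarded throughout.)
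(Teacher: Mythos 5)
Your proof is correct, and it reaches the conclusion by a route that is organized quite differently from the paper's, even though the decisive computation is shared. The paper argues directly and quantitatively: it covers $M\times M$ off the diagonal by three explicit regions ($u$ near $M_0$ with $w$ close to $u$; $u$ near $M_0$ with $w$ far; $u$ away from $M_0$) and exhibits a uniform constant $c=\max\{\cos^2\tc,\,1/(1+\varepsilon'),\,\sigma_1^2/\sigma_0^2\}<1$ bounding $\sigma(u)^2/(1+h(u,w))\le c\,\sigma_0^2$; in the near-diagonal region it lower-bounds the numerator of $h$ by $1-\langle w,u\rangle$ and then invokes the known positivity of the constant-variance critical radius $\tc$ of $M$ (Remark \ref{rem:critical}). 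You instead argue by contradiction along a maximizing sequence, using compactness; your ``far'' case merges the paper's regions (ii) and (iii), and your ``near'' case compares the second-order expansions of the numerator, $N_k=\tfrac12 s_k^\top(G+C)s_k+o(|s_k|^2)$ — exactly the computation of Lemma \ref{lem:nnd} — with the bound $\Vert P_{u}^\perp w\Vert=O(|s|^2)$ coming from the second fundamental form, exploiting that $G+C\succ 0$ on $M_0$ because $C=-\nabla\nabla\ell\succeq 0$ at a maximizer. What your route buys: it is self-contained (you do not need the external fact $\tc>0$; in effect you re-derive the needed ratio bound from the two expansions), and by working with $N$ itself — whose quadratic part is the genuinely positive definite $G+C$ — it sidesteps a delicate spot in the paper's region (i), where the quantity $\sigma(u)/\sigma(w)-1+\langle w,\nabla\ell(u)\rangle$ actually has quadratic part $\tfrac12\,c_{ij}t^it^j$ only (which is merely positive semidefinite when $\dim M_0>0$), not $\tfrac12(g_{ij}+c_{ij})t^it^j$ as displayed. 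What it costs: the argument is non-constructive and yields no explicit constant $c<1$. The two points that must be (and are) made explicit in your version are the uniformity in the base point $u_k$ of the $o(|s_k|^2)$ remainders, which follows from $C^2$-smoothness and compactness of $M$, and the convention for $h$ when $\Vert P_u^\perp w\Vert=0$.
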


From Proposition \ref{prop:validity}, we immediately have $M_0 \subset \Mc$,
where $M_0$ and  $\Mc$ are defined in (\ref{M0}) and (\ref{Mcri}), respectively.

\subsection{Supporting points of $\sigmaM$}

In this subsection, we focus on the shape of $\sigmaM$ in (\ref{sigmaM}).
Define a half space and its boundary in $\R^n$ by
\begin{align*}
 \Hf_u &= \{ z\in\R^n \mid \langle z,u-\nabla\ell(u)\rangle \le \sigma(u)\}, \\
 \partial \Hf_u &= \{ z\in\R^n \mid \langle z,u-\nabla\ell(u)\rangle = \sigma(u)\}.
\end{align*}
It is easy to see that
\begin{align*}
 \langle\sigma(u)u,u-\nabla\ell(u)\rangle = \sigma(u), \quad
 \biggl\langle\frac{\partial}{\partial t^i}\bigl[\sigma(u)u\bigr],u-\nabla\ell(u)\biggr\rangle = 0,
\end{align*}
that is,
the hyperplane $\partial\Hf_u$ is tangent to $\sigmaM$ at $\sigma(u)u$.
Let
\begin{align*}
 \Msupp = \bigl\{ u\in M \mid \partial\Hf_u \mbox{ is a supporting hyperplane of } \sigmaM \mbox{ with }\Hf_u\supset\sigmaM \bigr\}
\end{align*}
be the set of supporting points.

\begin{proposition}
\label{prop:supporting}
{\rm (i)} $u\in \Msupp \iff \sigma(u)/\sigma(w) - \langle w,u-\nabla\ell(u)\rangle\ge 0,\,\forall w\in M$.

{\rm (ii)} $\Msupp\supset\Mc$. 

{\rm (iii)} When $u\notin\Msupp$, the tube $(M)_{b/\sigma(\cdot)}$, $b\ge\bc'(u)$, is degenerated at $u$.
\end{proposition}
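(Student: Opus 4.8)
The three parts can be handled separately, and none of them should require heavy computation. Part~(i) is a formal unwinding of the definition of $\Msupp$; part~(ii) follows by contraposition from the characterisation of $\bc$ in Theorem~\ref{thm:critical}; and part~(iii) comes from a first-order stationarity computation analogous to the constant-variance critical-radius argument recalled in Remark~\ref{rem:critical}. For (i): by the two identities displayed just before the proposition, $\sigma(u)u\in\partial\Hf_u$, so $\partial\Hf_u$ automatically meets $\sigmaM$; hence ``$\partial\Hf_u$ is a supporting hyperplane of $\sigmaM$ with $\Hf_u\supset\sigmaM$'' is equivalent to the single containment $\sigmaM\subset\Hf_u$, i.e.\ $\langle\sigma(w)w,\,u-\nabla\ell(u)\rangle\le\sigma(u)$ for all $w\in M$. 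Dividing by $\sigma(w)>0$ yields exactly $\sigma(u)/\sigma(w)-\langle w,u-\nabla\ell(u)\rangle\ge0$ for all $w\in M$, which is (i).

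For (ii) I would prove the contrapositive $u\notin\Msupp\Rightarrow u\notin\Mc$. If $u\notin\Msupp$, part~(i) furnishes $w_0\in M$ with $\sigma(u)/\sigma(w_0)-\langle w_0,u-\nabla\ell(u)\rangle<0$; since this quantity vanishes at $w=u$, necessarily $w_0\ne u$. Then the positive part in (\ref{h}) is zero, so $h(u,w_0)=\Vert\nabla\ell(u)\Vert^2$, while $h(u,w)\ge\Vert\nabla\ell(u)\Vert^2$ for every $w$; hence by (\ref{critical1}), $\bc'(u)^2=\sigma(u)^2/(1+\Vert\nabla\ell(u)\Vert^2)$, attained at $w_0$. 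Therefore $\bc^2\ge\bc'(u)^2=\sigma(u)^2/(1+\Vert\nabla\ell(u)\Vert^2)$, i.e.\ $(1+\Vert\nabla\ell(u)\Vert^2)\bc^2/\sigma(u)^2\ge1$, so $u\notin\Mc$ by (\ref{Mcri}). The only point to watch is the convention in (\ref{h}) when $\Vert P_u^\perp w_0\Vert=0$; since the numerator is already $\le0$ there, the natural reading still gives $h(u,w_0)=\Vert\nabla\ell(u)\Vert^2$ (alternatively one perturbs $w_0$ within $M$ to keep the defect of~(i) strictly negative and $\Vert P_u^\perp w_0\Vert>0$).

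For (iii), the computation in (ii) already shows $\bc'(u)^2=\sigma(u)^2/(1+\Vert\nabla\ell(u)\Vert^2)$ whenever $u\notin\Msupp$. Fix such a $u$ and $b\ge\bc'(u)$. I read ``$(M)_{b/\sigma(\cdot)}$ is degenerated at $u$'' (cf.\ the introduction and the right panel of Figure~\ref{fig:tubes}) as: the fibre of the tube over $u$ --- the set of $w\in(M)_{b/\sigma(\cdot)}$ for which $u$ maximizes $u'\mapsto\sigma(u')\langle u',w\rangle$ over $M$ --- is empty, i.e.\ the local tube radius at $u$ vanishes. Suppose such a $w$ existed; then $\sigma(u)\langle u,w\rangle>b$. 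Since $M$ is a closed manifold, $u$ is a stationary point of $u'\mapsto\sigma(u')\langle u',w\rangle$, so $\langle\partial_i[\sigma(u)u],w\rangle=0$, i.e.\ $\langle\varphi_i,w\rangle=-(\partial_i\ell)\langle u,w\rangle$ for all $i$. Writing $w=\langle u,w\rangle u+w_T+w_N$ along $\R^n=\spn\{u\}\oplus T_u(M)\oplus N_u$, this forces $w_T=-\langle u,w\rangle\nabla\ell(u)$, hence $w=\langle u,w\rangle(u-\nabla\ell(u))+w_N$; taking squared norms and using $u\perp\nabla\ell(u)$ gives $1=\langle u,w\rangle^2(1+\Vert\nabla\ell(u)\Vert^2)+\Vert w_N\Vert^2\ge\langle u,w\rangle^2(1+\Vert\nabla\ell(u)\Vert^2)$. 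Thus $(\sigma(u)\langle u,w\rangle)^2\le\sigma(u)^2/(1+\Vert\nabla\ell(u)\Vert^2)=\bc'(u)^2$, and since $\sigma(u)\langle u,w\rangle>b\ge0$ we conclude $\sigma(u)\langle u,w\rangle\le\bc'(u)\le b$, contradicting $\sigma(u)\langle u,w\rangle>b$. So the fibre over $u$ is empty, which is the asserted degeneracy.

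I expect the substance here to be definitional rather than deep: the real care goes into fixing the precise meaning of ``degenerated at $u$'' so that (iii) has exact content --- the fibre-emptiness / vanishing-local-radius reading above is the one matching Figure~\ref{fig:tubes} and the way $\Msupp$ enters the volume formula of Theorem~\ref{thm:volume} --- together with the orthogonal-decomposition bookkeeping in the stationarity step and the edge-case conventions in (\ref{h}); everything else is immediate from the pointwise geometry already set up before the proposition.
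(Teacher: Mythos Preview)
Your proof is correct and follows the same line as the paper's. Parts (i) and (ii) match the paper almost verbatim: (i) is declared ``just a restatement,'' and (ii) is the same contrapositive, observing that $u\notin\Msupp$ forces the truncation in (\ref{h}) to be active at some $w_0\ne u$, whence $\inf_{w\ne u}h(u,w)=\Vert\nabla\ell(u)\Vert^2$ and the inequality defining $\Mc$ fails. For (iii) the paper invokes the sets $A_u(b)$ and $B_u$ from the proof of Theorem~\ref{thm:critical}: when $M_u^-\ne\emptyset$ and $b\ge\bc'(u)$ one has $A_u(b)=\emptyset$, hence $A_u(b)\cap B_u=\emptyset$, i.e., no $z/\Vert z\Vert$ has its maximum at $u$. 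Your argument re-derives the key inequality $\sigma(u)\langle u,w\rangle\le\sigma(u)/\sqrt{1+\Vert\nabla\ell(u)\Vert^2}$ directly from stationarity and the orthogonal decomposition, which is precisely the content of (\ref{note}) with $\Vert z\Vert=1$; so the two arguments are the same computation, yours being self-contained rather than citing the $A_u(b)$ machinery.
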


(i) means that, when $u\in\Msupp$, the argument of the truncation function $\{\cdot\}_+$ in (\ref{h}) is always nonnegative.
(ii) implies that the non-supporting points do not make contribution to the tube-volume formula (\ref{PU1}).
The inclusion in (ii) is strict in general.

\subsection{Tail probability of the maximum}

Let
\[
 \bar G_\nu(x) = \frac{1}{\Gamma(\nu/2)}\int_x^{\infty} t^{\nu/2-1}e^{-t/2}\,\dd t 
\]
be the upper probability of the chi-square distribution $\chi^2_\nu$ with $\nu$ degrees of freedom.
Let
\[
 \bar\Phi(x) = \frac{1}{(2\pi)^{1/2}}\int_x^\infty e^{-t^2/2}\,\dd t
\]
be the upper probability of the standard normal distribution $\mathcal{N}(0,1)$.

\begin{theorem}[Tube method]
\label{thm:tube_method}
Let $G$, $C$, $\dd u$, and $\bc$ be defined as in Theorem \ref{thm:volume}.
Then,
\begin{equation}
\label{PT1}
 \P\bigl(\Xmax > c\bigr) = \Ptube\bigl(\Xmax > c\bigr) + O(\bar G_n(c^2/\bc^2)),
\quad c\to\infty,
\end{equation}
where
\begin{align}
\label{PT2}
\Ptube\bigl(\Xmax > c\bigr)
= \sum_{e=0}^d
 \frac{1}{(2\pi)^{e/2} \Omega_{d-e+1}}
 \int_M \frac{\det(I_d+C_u G_u^{-1})}{(1+\Vert\nabla\ell(u)\Vert^2)^{\frac{1}{2}(d-e+1)}} \\
 \times \bar G_{d-e+1}\biggl( \frac{1+\Vert\nabla\ell(u)\Vert^2}{\sigma(u)^2} c^2 \biggr)\,\zeta_e(u)\,\dd u.
\nonumber
\end{align}
The range $M$ of the integral in (\ref{PT2}) can be replaced with $\Mc$ in (\ref{Mcri}).
\end{theorem}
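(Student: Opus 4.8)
The plan is to derive Theorem~\ref{thm:tube_method} from Theorem~\ref{thm:volume} by integrating the volume-of-tube formula against the law of $\Vert\xi\Vert^2$ through the identity (\ref{PT}). Put $\rho=\Vert\xi\Vert^2\sim\chi^2_n$; since $\rho$ is independent of $\xi/\Vert\xi\Vert$, the identity (\ref{PT}) reads $\P(\Xmax>c)=\E_\rho[\P(\Ymax>c/\sqrt\rho)]$. Write $g(b)$ for the right-hand side of (\ref{PU1}) regarded as a function of $b\ge0$: its integrand is defined for every $b$, and by Theorem~\ref{thm:volume} one has $g(b)=\P(\Ymax>b)$ whenever $b\ge\bc$. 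I will use two facts about $g$. \emph{(A) Uniform boundedness:} apart from the factor $\bar B_{\cdot,\cdot}(\cdot)\in[0,1]$, each summand of $g$ integrates over the compact $M$ the product of the continuous functions $\det(I_d+C_uG_u^{-1})$, $(1+\Vert\nabla\ell(u)\Vert^2)^{-(d-e+1)/2}$ and $\zeta_e(u)$, so $|g(b)|\le K$ for all $b\ge0$ with $K<\infty$ fixed. \emph{(B) Integration against $\chi^2_n$:} $\E_\rho[g(c/\sqrt\rho)]$ equals $\Ptube(\Xmax>c)$ of (\ref{PT2}).

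Granting (A) and (B), the conclusion is immediate. Since $c/\sqrt\rho\ge\bc$ is equivalent to $\rho\le c^2/\bc^2$, the integrand $\P(\Ymax>c/\sqrt\rho)-g(c/\sqrt\rho)$ vanishes when $\rho\le c^2/\bc^2$ and is bounded by $1+K$ in absolute value otherwise, hence
\[
\bigl|\P(\Xmax>c)-\Ptube(\Xmax>c)\bigr|=\bigl|\E_\rho\bigl[\P(\Ymax>c/\sqrt\rho)-g(c/\sqrt\rho)\bigr]\bigr|\le(1+K)\,\P\bigl(\rho>c^2/\bc^2\bigr)=O\bigl(\bar G_n(c^2/\bc^2)\bigr),
\]
which is (\ref{PT1}). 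To prove (B) I would interchange $\int_M$ and $\E_\rho$ term by term (legitimate by (A) and Tonelli) and evaluate the remaining scalar integral by the classical beta--gamma relation: if $U\sim\chi^2_p$ and $V\sim\chi^2_q$ are independent with $p+q=n$, then $B:=U/(U+V)\sim\mathrm{Beta}(p/2,q/2)$ is independent of $\rho=U+V$, so using this independence,
\[
\E_{\rho\sim\chi^2_n}\bigl[\bar B_{p/2,q/2}(\lambda/\rho)\bigr]=\E\bigl[\P(B>\lambda/\rho\mid\rho)\bigr]=\P(B\rho>\lambda)=\P(U>\lambda)=\bar G_p(\lambda)
\]
for every $\lambda>0$. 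Taking $p=d-e+1$, $q=n-d+e-1$ (both positive since $0\le e\le d\le n-2$) and $\lambda=\frac{1+\Vert\nabla\ell(u)\Vert^2}{\sigma(u)^2}\,c^2$ converts each $\bar B$ appearing in $g$ into the matching $\bar G_{d-e+1}$, yielding precisely (\ref{PT2}).

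The claim that $M$ may be replaced by $\Mc$ in (\ref{PT2}) follows by the same argument applied verbatim with $\Mc$ in place of $M$: Theorem~\ref{thm:volume} also asserts that (\ref{PU1}) with $M$ replaced by the fixed (i.e.\ $b$-independent) set $\Mc$ equals $\P(\Ymax>b)$ for $b\ge\bc$, so $g$ and its $\Mc$-analogue agree on $[\bc,\infty)$ and are both uniformly bounded, whence their $\E_\rho$-integrals differ by $O(\bar G_n(c^2/\bc^2))$. I do not expect a serious obstacle here: all the geometry has already been done in Theorems~\ref{thm:volume} and \ref{thm:critical}, and what remains is the beta--gamma computation and an application of Fubini. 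The one point requiring care is the accounting of the tail event $\{\rho>c^2/\bc^2\}$, on which the exact volume formula of Theorem~\ref{thm:volume} is unavailable; this region is exactly the source of the error term, and one must keep track that the relevant degrees of freedom are $n$ (coming from $\Vert\xi\Vert^2\sim\chi^2_n$) and that the curvature-type factors $\zeta_e$ and $\det(I_d+C_uG_u^{-1})$ are genuinely continuous — hence bounded — on the compact $M$.
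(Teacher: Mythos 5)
Your argument is correct and follows essentially the same route as the paper: both condition on $\Vert\xi\Vert^2\sim\chi^2_n$ via (\ref{PT}), use that the volume formula $g(b)$ agrees with $\P(\Ymax>b)$ for $b\ge\bc$ and is uniformly bounded elsewhere so that the discrepancy is controlled by $\P(\Vert\xi\Vert^2>c^2/\bc^2)=\bar G_n(c^2/\bc^2)$, and convert each $\bar B_{\frac12(d-e+1),\frac12(n-d+e-1)}$ into $\bar G_{d-e+1}$ by the beta--gamma (chi-square decomposition) identity. Your write-up is if anything slightly more explicit than the paper's, which merely asserts the identity $\E[\bar B_{\frac{1}{2}k,\frac{1}{2}(n-k)}(c^2/\Vert\xi\Vert^2)]=\bar G_k(c^2)$ that you prove.
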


Each term of the tube formula (\ref{PT2}) is of the order of
$\bar G_{\nu}(c^2/\sigma_0^2)\asymp c^{\nu-2}e^{-c^2/2\sigma_0^2}$,
$\sigma_0=\max_{u\in M}\sigma(u)$,
whereas the order of the remainder term is
$\bar G_n(c^2/\bc^2)\asymp c^{n-2}e^{-c^2/2\bc^2}$.
Here $f(c)\asymp g(c)$ denotes that both $|f(c)/g(c)|$ and $|f(c)/g(c)|$ are bounded when $c\to\infty$.
We use the notation $f(c)\sim g(c)$ if $f(c)/g(c)\to 1$ as $c\to\infty$.

Proposition \ref{prop:validity} affirms that the remainder term of the approximation (\ref{PT1}) is exponentially smaller than that of the main part.
That is,
\begin{equation}
\label{validity}
 \lim_{c\to\infty}\frac{1}{c^2}\log
 \biggl|\frac{\Ptube\bigl(\Xmax > c\bigr)}{\P\bigl(\Xmax > c\bigr)}-1\biggr|
 = -\frac{1}{2}\biggl(\frac{1}{\bc^2}-\frac{1}{\sigma_0^2}\biggr)<0.
\end{equation}

\begin{remark}
\cite[Theorem 4.3]{taylor-takemura-adler:2005} proved that (\ref{validity}) holds when $\sigma(u)\equiv 1$ and $X(u)$ does not necessarily have a finite Karhunen-Lo\`eve expansion, i.e., $n$ may be infinite.
In other words, the remainder term $O(\bar G_n(c^2/\bc^2))$ in (\ref{PT1}) does not depend on $n$.
For this purpose, they described the threshold $\bc$ in terms of $X(\cdot)$ irrespective of the dimension $n$ of the ambient space.

In the inhomegeneous variance case, by introducing a unit-variance Gaussian random field
\[
 \X(u) = \frac{1}{\sigma(u)}X(u), \quad u\in M,
\]
$h(u,w)$ in (\ref{h}) and hence $\bc$ in (\ref{critical1}) can be written in terms of $X(\cdot)$ by substituting
\begin{align*}
\langle w,u\rangle =& \Cov\bigl(\X(w),\X(u)\bigr),
\\
\langle w,\nabla\ell(u)\rangle =& \sum_{i,j=1}^d \Cov\bigl(\X(w),\partial_i \X(u)\bigr)\partial_j \ell(u) g^{ij}(u), \quad \partial_i=\partial/\partial t_i,
\\
\Vert P_u^\perp w \Vert^2 =& \Var\Bigl(\X(w)\cond \X(u),\bigl(\partial_i\X(u)\bigr)_{1\le i\le d}\Bigr)
\end{align*}
into (\ref{h}), where $(t^i)_{1\le i\le d}$ is the local coordinates of $M$ around $u$.
We conjecture that (\ref{validity}) holds even when $X(u)$ does not necessarily have a finite Karhunen-Lo\`eve expansion.
\end{remark}

In (\ref{PT2}), the term of $e=0$ is the leading term when $c\to\infty$.
In the case where $\sigma(u)$ is not constant, this leading term can
again be approximated using Laplace's method (\cite{erdelyi:1956}).

\begin{theorem}[Laplace approximation]
\label{thm:laplace}
Assume that $\sigma(u)$ takes its maximum $\sigma_0$ on the $d_0$-dimensional closed manifold in (\ref{M0}).
Let $\dd u_0(u)$ be the volume element of $M_0$ at $u$.
Let $G_u$ and $C_u$ be defined in Theorem \ref{thm:volume}.
Then,
\begin{align}
\label{laplace}
 \P\bigl(\Xmax > c\bigr) \sim
 \frac{1}{\Omega_{d_0+1}}
 \int_{M_0} \frac{\det(I_d+C_u G_u^{-1})^{\frac{1}{2}}}{\tr_{d-d_0}(C_u G_u^{-1})^{\frac{1}{2}}} \dd u_0(u)
 \times \bar G_{d_0+1}(c^2/\sigma_0^2), \quad c\to\infty.
\end{align}
\end{theorem}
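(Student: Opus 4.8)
The plan is to start from the exact formula \eqref{PT2} for $\Ptube(\Xmax>c)$, which by Theorem \ref{thm:tube_method} agrees with $\P(\Xmax>c)$ up to an exponentially smaller error $O(\bar G_n(c^2/\bc^2))$; by Proposition \ref{prop:validity} this error is negligible relative to the $e=0$ term, whose decay rate is governed by $1/\sigma_0^2 > 1/\bc^2$ is false --- rather $\bc<\sigma_0$ gives $1/\bc^2>1/\sigma_0^2$, so the remainder is indeed exponentially smaller. Hence it suffices to find the leading asymptotics of the $e=0$ summand
\[
 I(c) = \frac{1}{\Omega_{d+1}} \int_M \frac{\det(I_d+C_u G_u^{-1})}{(1+\Vert\nabla\ell(u)\Vert^2)^{\frac{1}{2}(d+1)}}\,
 \bar G_{d+1}\!\biggl(\frac{1+\Vert\nabla\ell(u)\Vert^2}{\sigma(u)^2}c^2\biggr)\,\dd u,
\]
and to check that the contributions of the terms $e\ge 2$ are of the same exponential order but subdominant polynomially, or can be folded into the same Laplace expansion (in fact each $e\ge 2$ term carries an extra factor $c^{-e}$ relative to $e=0$ after the chi-square tail is expanded, so they do not affect the leading term).

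Next I would insert the asymptotic $\bar G_\nu(x) \sim x^{\nu/2-1}e^{-x/2}/(2^{\nu/2-1}\Gamma(\nu/2))$ as $x\to\infty$, with $\nu=d+1$ and $x=x(u)=\frac{1+\Vert\nabla\ell(u)\Vert^2}{\sigma(u)^2}c^2$. The exponent is $-\tfrac12 x(u) = -\tfrac{c^2}{2}\,\frac{1+\Vert\nabla\ell(u)\Vert^2}{\sigma(u)^2}$, so the phase function to be minimized over $M$ is $\psi(u) = \frac{1+\Vert\nabla\ell(u)\Vert^2}{\sigma(u)^2}$. On $M_0$ we have $\sigma=\sigma_0$ and, because $\ell=\log\sigma$ attains its max there, the tangential gradient $\nabla\ell$ vanishes on $M_0$; thus $\psi\equiv 1/\sigma_0^2$ on $M_0$ and $M_0$ is exactly the minimizing set. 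The key computation is the Hessian of $\psi$ in the directions normal to $M_0$ inside $M$: writing $\psi = e^{-2\ell}(1+\Vert\nabla\ell\Vert^2)$ and using that at a point of $M_0$ both $\ell$ is maximal and $\nabla\ell=0$, a second-order Taylor expansion in normal coordinates $s=(s^a)$ (the $d-d_0$ directions of $M$ transverse to $M_0$) gives $\psi = \sigma_0^{-2}\bigl(1 + s^\top (2A) s + o(\|s\|^2)\bigr)$ where $A$ is (the normal block of) the Hessian of $\ell$ at that point; one then identifies $2A$ with the normal block of $2C_u$ --- indeed from \eqref{C}, $c_{ij} = -\nabla_i\nabla_j\ell + \nabla_i\ell\,\nabla_j\ell$, and on $M_0$ the second term vanishes, so $C_u$ restricted to the normal directions equals $-(\nabla_i\nabla_j\ell)$, i.e. minus the normal Hessian of $\ell$, which is positive semidefinite since $\ell$ is maximized on $M_0$; nondegeneracy of this block (which I would need to assume, or derive from $M_0$ being precisely the argmax manifold together with a Morse--Bott hypothesis) is what makes the Laplace method applicable.

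With this in hand I would apply the Laplace method for an integral over $M$ concentrating on the submanifold $M_0$: the $d_0$ tangential directions along $M_0$ contribute the integral $\int_{M_0}(\cdots)\,\dd u_0(u)$ of the amplitude restricted to $M_0$, and the $d-d_0$ normal Gaussian integral $\int e^{-\frac{c^2}{\sigma_0^2} s^\top C^{\mathrm{nor}} s}\,\dd s$ contributes $(\pi \sigma_0^2/c^2)^{(d-d_0)/2}\det(2C^{\mathrm{nor}})^{-1/2}$, where $C^{\mathrm{nor}}$ is the $(d-d_0)\times(d-d_0)$ normal block of $C_u$ with respect to the metric, so that $\det(2C^{\mathrm{nor}})$ should come out as $2^{d-d_0}\tr_{d-d_0}(C_uG_u^{-1})$ evaluated on $M_0$ --- here I use that when $\nabla\ell=0$ the matrix $C_uG_u^{-1}$ has rank $\le d-d_0$ with the nonzero part being exactly the normal block, so its $(d-d_0)$th elementary symmetric function is the determinant of that block; meanwhile $\det(I_d+C_uG_u^{-1})$ in the numerator stays as is since the tangential-to-$M_0$ directions carry no extra factor. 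Collecting the amplitude factor $\det(I_d+C_uG_u^{-1})/(1+0)^{(d+1)/2}=\det(I_d+C_uG_u^{-1})$, the chi-square normalization $c^{d-1}/(2^{(d-1)/2}\Gamma((d+1)/2))\cdot e^{-c^2/2\sigma_0^2}\cdot\sigma_0^{-(d-1)}\cdot$(extra powers of $1+\Vert\nabla\ell\Vert^2$, which on $M_0$ are $1$), and the normal Gaussian factor $(\pi\sigma_0^2)^{(d-d_0)/2}c^{-(d-d_0)}\cdot 2^{-(d-d_0)/2}\tr_{d-d_0}(C_uG_u^{-1})^{-1/2}$, together with $\Omega_{d+1}^{-1}$ and $\Omega_{d_0+1}=2\pi^{(d_0+1)/2}/\Gamma((d_0+1)/2)$, the powers of $c$ and $\sigma_0$ and $\pi$ and $2$ and the Gamma functions reorganize into precisely $\Omega_{d_0+1}^{-1}\int_{M_0}\det(I_d+C_uG_u^{-1})^{1/2}\tr_{d-d_0}(C_uG_u^{-1})^{-1/2}\,\dd u_0(u)\times\bar G_{d_0+1}(c^2/\sigma_0^2)$, using the matching asymptotic $\bar G_{d_0+1}(c^2/\sigma_0^2)\sim (c^2/\sigma_0^2)^{(d_0-1)/2}e^{-c^2/2\sigma_0^2}/(2^{(d_0-1)/2}\Gamma((d_0+1)/2))$ to package the result. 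The main obstacle is the bookkeeping in this last step --- keeping the $(d-d_0)$-dimensional normal block, the rank structure of $C_uG_u^{-1}$ on $M_0$, and the identification $\det(2C^{\mathrm{nor}})=2^{d-d_0}\tr_{d-d_0}(C_uG_u^{-1})$ straight, and verifying the nondegeneracy hypothesis under which the Laplace method is valid; the constant-chasing with $\Omega$'s and $\Gamma$'s is routine once the geometry is pinned down.
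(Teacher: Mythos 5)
Your overall strategy is the paper's: reduce to the $e=0$ term of (\ref{PT2}) (the remainder and the $e\ge 2$ terms being exponentially, resp.\ polynomially, subdominant), identify the phase $q(u)=(1+\Vert\nabla\ell(u)\Vert^2)/\sigma(u)^2$ as minimized exactly on $M_0$, and apply Laplace's method in the $d-d_0$ directions transverse to $M_0$. However, there is a genuine error in your expansion of the phase, and it sits exactly where the nontrivial part of the answer comes from. Expanding in normal coordinates $s$ about $u_0\in M_0$ (where $\nabla\ell=0$), one has $e^{-2\ell}=\sigma_0^{-2}\bigl(1-s^\top H s+o(\Vert s\Vert^2)\bigr)$ with $H=(\nabla_a\nabla_b\ell)$ the normal Hessian, and, since $\nabla\ell=O(\Vert s\Vert)$, the factor $1+\Vert\nabla\ell\Vert^2$ contributes a second-order term $s^\top HG^{-1}Hs$. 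Hence
\begin{equation*}
 q=\frac{1}{\sigma_0^2}\Bigl(1+s^\top\bigl(C^{\mathrm{nor}}+C^{\mathrm{nor}}(G_{22\cdot 1})^{-1}C^{\mathrm{nor}}\bigr)s\Bigr)+o(\Vert s\Vert^2),\qquad C^{\mathrm{nor}}=-H,
\end{equation*}
not $\sigma_0^{-2}(1+2\,s^\top C^{\mathrm{nor}}s)$ as you claim. The identity $\det(C+CG^{-1}C)=\det(CG^{-1})\det(G+C)$ is then what converts the Gaussian normal integral $\det\bigl(C^{\mathrm{nor}}+C^{\mathrm{nor}}G_{22\cdot1}^{-1}C^{\mathrm{nor}}\bigr)^{-1/2}\det(G_{22\cdot1})^{1/2}$ times the amplitude $\det(I_d+C_uG_u^{-1})$ into $\det(I_d+C_uG_u^{-1})^{1/2}\,\tr_{d-d_0}(C_uG_u^{-1})^{-1/2}$; this is the only mechanism by which the \emph{square root} of $\det(I_d+C_uG_u^{-1})$ appears. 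With your Hessian $2C^{\mathrm{nor}}$ the bookkeeping you defer to the end cannot close: you would be left with $\det(I_d+C_uG_u^{-1})$ to the first power and a spurious factor $2^{-(d-d_0)/2}$, contradicting the stated formula. The rest of your outline (rank structure of $C_uG_u^{-1}$ on $M_0$, $\tr_{d-d_0}(C_uG_u^{-1})=\det(C_{22}G_{22\cdot1}^{-1})$, nondegeneracy/Morse--Bott hypothesis) is consistent with the paper's proof, but the Hessian computation must be corrected for the argument to go through.
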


\begin{corollary}
\label{cor:laplace}
Assume that $\sigma(u)$ takes the unique maximum $\sigma_0$ at $u_0$.
Let $G_0=G_{u_0}$, $C_0=C_{u_0}$.
Then,
\[
\P\bigl(\Xmax > c\bigr) \sim 
 \frac{\det(I_d + C_0 G_0^{-1})^{\frac{1}{2}}}{\det(C_0 G_0^{-1})^{\frac{1}{2}}}
 \,\bar\Phi(c/\sigma_0),
\quad c\to\infty.
\]
\end{corollary}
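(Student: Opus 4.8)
The plan is to obtain Corollary~\ref{cor:laplace} as the special case $d_0=0$ of Theorem~\ref{thm:laplace}. When $\sigma(\cdot)$ attains its maximum $\sigma_0$ only at the single point $u_0$, the set $M_0$ in (\ref{M0}) is the singleton $\{u_0\}$, which is a $0$-dimensional closed $C^2$-manifold whose volume element $\dd u_0$ is the counting measure with unit mass at $u_0$; hence $\int_{M_0}\psi(u)\,\dd u_0(u)=\psi(u_0)$ for any continuous $\psi$. (As in Theorem~\ref{thm:laplace}, nondegeneracy of the maximum is implicit, i.e.\ $C_0=C_{u_0}$ is positive definite so that the quantities below are positive; also, Proposition~\ref{prop:validity} gives $u_0\in M_0\subset\Mc$, so Theorem~\ref{thm:laplace} indeed applies at $u_0$ and the statement is non-vacuous.)

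First I would substitute $d_0=0$ into the right-hand side of (\ref{laplace}). The factor $1/\Omega_{d_0+1}$ becomes $1/\Omega_1=\Gamma(1/2)/(2\pi^{1/2})=1/2$; the integral over $M_0$ collapses to the value of the integrand at $u_0$; and $\tr_{d-d_0}(C_uG_u^{-1})$ becomes $\tr_d(C_0G_0^{-1})$, which equals $\det(C_0G_0^{-1})$ since $C_0G_0^{-1}$ is a $d\times d$ matrix. Thus (\ref{laplace}) reads
\[
 \P\bigl(\Xmax>c\bigr)\;\sim\;\tfrac12\,\frac{\det(I_d+C_0G_0^{-1})^{\frac12}}{\det(C_0G_0^{-1})^{\frac12}}\,\bar G_1(c^2/\sigma_0^2),\qquad c\to\infty.
\]
Next I would convert the $\chi^2_1$ tail to a normal tail: the substitution $t=s^2$ in the integral defining $\bar G_1$, together with $\Gamma(1/2)=\pi^{1/2}$, gives the elementary identity $\bar G_1(y)=2\,\bar\Phi(\sqrt y)$ for $y\ge 0$, so $\bar G_1(c^2/\sigma_0^2)=2\,\bar\Phi(c/\sigma_0)$. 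Substituting this cancels the $\tfrac12$ against the $2$ and yields
\[
 \P\bigl(\Xmax>c\bigr)\;\sim\;\frac{\det(I_d+C_0G_0^{-1})^{\frac12}}{\det(C_0G_0^{-1})^{\frac12}}\,\bar\Phi(c/\sigma_0),\qquad c\to\infty,
\]
which is the claimed asymptotic.

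There is no serious obstacle here: the corollary is a direct reading of Theorem~\ref{thm:laplace} at $d_0=0$. The only points that need care are (i) the convention that a $0$-dimensional manifold's volume element is the counting measure, so that the "integral" over $M_0$ is a point evaluation, and (ii) the bookkeeping that $1/\Omega_1=1/2$ together with the $\bar G_1$--$\bar\Phi$ conversion leaves exactly the stated prefactor (and no spurious numerical constant). If one preferred an argument bypassing Theorem~\ref{thm:laplace}, the same conclusion follows by applying Laplace's method directly to the $e=0$ term of (\ref{PT2})---the terms with $e\ge 1$ being of strictly lower order in $c$, since $\sigma_0$ is attained only at $u_0$---by expanding $\log\sigma$ to second order at $u_0$, where its gradient vanishes and its covariant Hessian is $-C_0$, and evaluating the resulting Gaussian integral over $T_{u_0}(M)$; that integral is precisely what produces the factor $\det(C_0G_0^{-1})^{-1/2}$ and reduces the exponent of $\det(I_d+C_0G_0^{-1})$ from $1$ to $\tfrac12$.
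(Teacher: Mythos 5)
Your proposal is correct. The only substantive difference from the paper is one of presentation: the paper does not obtain the corollary by formally setting $d_0=0$ in (\ref{laplace}); instead it proves Theorem \ref{thm:laplace} and Corollary \ref{cor:laplace} in a single argument, treating the point case $M_0=\{u_0\}$ as case (i) and applying Laplace's method directly to the $e=0$ term of (\ref{PT2}) --- expanding $q=(1+\Vert\nabla\ell\Vert^2)/\sigma^2$ to second order at $u_0$, where the Hessian in coordinates is $\frac{2}{\sigma_0^2}(C_0+C_0G_0^{-1}C_0)$, and evaluating the Gaussian integral to produce $\det(C_0+C_0G_0^{-1}C_0)^{-\frac12}\det(G_0)^{\frac12}$, which combines with the prefactor $\det(I_d+C_0G_0^{-1})$ to give the stated constant. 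That is precisely the alternative route you sketch in your last sentence. Your primary route (the counting-measure convention for $\dd u_0$ at $d_0=0$, $1/\Omega_1=1/2$, $\tr_d=\det$, and $\bar G_1(y)=2\bar\Phi(\sqrt{y})$) is a clean and correct specialization whose bookkeeping you carry out accurately, and it buys a one-line derivation once the theorem is granted; the paper's direct computation buys an actual verification that the $d_0=0$ case of the Laplace argument goes through, rather than relying on the theorem's statement extending to dimension zero. You are also right to flag the implicit nondegeneracy assumption $\det(C_0)\neq 0$, which the paper likewise leaves tacit.
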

Corollary \ref{cor:laplace} is a special case of \cite[Theorem 8.2]{piterbarg:1996}.

\subsection{Bonferroni method as the tube method}

So far we assumed that the index set $M$ is a manifold.
In this subsection, we consider the case where $M$ is a finite set.

Let $(X_1,\ldots,X_K)$ be a Gaussian random vector with moment structure
$\E[X_i]=0$, $\Var(X_i)=\sigma_i^2$, $\Corr(X_i,X_j)=\rho_{ij}$, $|\rho_{ij}|<1$.
We consider the distribution of the maximum
\[
 \Xmax = \max\{X_1,\ldots,X_K\}.
\]
Such $X_i$'s are realized by
\[
 X_i = \sigma_i \langle \varphi^{(i)},\xi\rangle, \quad \xi\sim\mathcal{N}_n(0,I_n),
\]
where $\varphi^{(i)}$'s are unit vectors such that
$\rho_{ij}=\langle\varphi^{(i)},\varphi^{(j)}\rangle$.
$n$ is at most $K$.
$\Xmax$ is the maximum of the random field $X(u)$, $u\in M$, in (\ref{Xu}) with
\[
 M = \bigl\{\varphi^{(i)}\in\S^{n-1} \mid 1\le i\le K \bigr\}.
\]
Define
\[
 Y_i=\sigma_i\langle\varphi^{(i)},\xi/\Vert\xi\Vert\rangle, \quad
 \Ymax =\max\{Y_1,\ldots,Y_K\}.
\]

The tube formula as well as its remainder order are given as a byproduct of the previous sections.

\begin{theorem}
\label{thm:bonferroni}
\begin{equation}
\label{PU4}
 \P\bigl(\Ymax > b\bigr) = \sum_{i=1}^K \frac{1}{2}
 \bar B_{\frac{1}{2},\frac{1}{2}(n-1)}(b^2/\sigma_i^2), \quad b\ge\bc,
\end{equation}
and
\begin{equation}
\label{bonferroni}
 \P\bigl(\Xmax > c\bigr) = \sum_{i=1}^K \bar\Phi(c/\sigma_i) + \bar G_n(c^2/\bc^2),
 \quad c\to\infty,
\end{equation}
where
\begin{equation}
\label{hij}
 \bc^2 = \max_{i\ne j}\frac{\sigma_i^2}{1+h(i,j)}, \quad h(i,j) =
 \frac{(\sigma_i/\sigma_j - \rho_{ij})_+^2}{1 - \rho_{ij}^2}.
\end{equation}
(\ref{PU4}) and (\ref{bonferroni}) hold even if the summation $\sum_{i=1}^K$ is replaced with $\sum_{i:\sigma_i>\bc}$.
\end{theorem}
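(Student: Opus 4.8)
The plan is to derive Theorem~\ref{thm:bonferroni} as the specialization of Theorems~\ref{thm:volume}, \ref{thm:critical}, and~\ref{thm:tube_method} to the degenerate case $d=0$, where $M=\{\varphi^{(1)},\ldots,\varphi^{(K)}\}$ is a finite set of isolated points, each viewed as a $0$-dimensional $C^2$-submanifold of $\S^{n-1}$. First I would check that all the differential-geometric quantities appearing in the general formulas collapse appropriately: with $d=0$ there are no local coordinates, the metric matrix $G_u$ is the empty matrix, $\det(I_d+C_uG_u^{-1})=1$, $\nabla\ell(u)=0$ so $\Vert\nabla\ell(u)\Vert^2=0$, and the only surviving term in the sum over $e$ is $e=0$ with $\zeta_0(u)=1$. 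Thus the integral $\int_M(\cdots)\,\dd u$ becomes the finite sum $\sum_{i=1}^K$, and the general tube-volume formula~(\ref{PU1}) reduces to $\P(\Ymax>b)=\sum_{i=1}^K\frac{1}{\Omega_1}\bar B_{\frac12,\frac12(n-1)}(b^2/\sigma_i^2)$; since $\Omega_1=\Vol_0(\S^0)=2$, this is exactly~(\ref{PU4}). Likewise~(\ref{PT2}) with $d=0$ gives $\Ptube(\Xmax>c)=\sum_{i=1}^K\frac{1}{\Omega_1}\bar G_1(c^2/\sigma_i^2)=\sum_{i=1}^K\bar\Phi(c/\sigma_i)$, using $\bar G_1(x^2)=2\bar\Phi(x)$, which is the main term of~(\ref{bonferroni}); the remainder order $O(\bar G_n(c^2/\bc^2))$ is inherited verbatim from Theorem~\ref{thm:tube_method}.

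Next I would specialize the critical-radius formula of Theorem~\ref{thm:critical}. With $d=0$ we have $\nabla\ell(u)\equiv 0$, and for two distinct points $u=\varphi^{(i)}$, $w=\varphi^{(j)}$ the quantity $\Vert P_u^\perp w\Vert^2$ becomes $1-\langle u,w\rangle^2=1-\rho_{ij}^2$ (the tangential correction $\sum_{i,j}\langle\varphi_i,w\rangle\langle\varphi_j,w\rangle g^{ij}$ is empty). Then $h(u,w)$ in~(\ref{h}) reduces to $\bigl(\{\sigma_i/\sigma_j-\rho_{ij}\}_+\bigr)^2/(1-\rho_{ij}^2)$, which is $h(i,j)$ in~(\ref{hij}), and~(\ref{critical1}) becomes $\bc^2=\sup_{i}\sup_{j\ne i}\sigma_i^2/(1+h(i,j))=\max_{i\ne j}\sigma_i^2/(1+h(i,j))$. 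One should double-check the case $\sigma_i<\sigma_j$: then the local critical radius $\bc'(\varphi^{(i)})$ may be smaller, but taking the supremum over $u$ picks out the correct global value; in particular the point achieving the maximum of $\sigma_i$ contributes, consistent with Proposition~\ref{prop:validity} giving $\bc<\sigma_0=\max_i\sigma_i$.

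For the final assertion — that $\sum_{i=1}^K$ may be replaced by $\sum_{i:\sigma_i>\bc}$ — I would invoke the last sentence of Theorem~\ref{thm:volume} (and of Theorem~\ref{thm:tube_method}), which says the range of integration can be restricted to $\Mc$. With $d=0$, the set $\Mc$ in~(\ref{Mcri}) is $\{u\in M\mid \bc^2/\sigma(u)^2<1\}=\{\varphi^{(i)}\mid\sigma_i>\bc\}$, since $\Vert\nabla\ell\Vert^2=0$. Hence the indices with $\sigma_i\le\bc$ drop out of both~(\ref{PU4}) and~(\ref{bonferroni}); one only needs to note that for such indices $\bar B_{\frac12,\frac12(n-1)}(b^2/\sigma_i^2)$ and $\bar\Phi(c/\sigma_i)$ are of order at most $\bar G_n(c^2/\bc^2)$ and thus are absorbed into the remainder, or else that the equality $\P(\Ymax>b)=\sum_{i:\sigma_i>\bc}(\cdots)$ holds exactly for $b\ge\bc$ by the restricted form of~(\ref{PU1}).

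The argument is essentially a bookkeeping exercise, so there is no deep obstacle; the one point requiring care is verifying that the $d=0$ case is genuinely covered by the proofs of Theorems~\ref{thm:volume} and~\ref{thm:critical} rather than being an excluded edge case — the paper's standing assumption is $d<n-1$, which permits $d=0$, but the self-overlap machinery and the tube-volume computation in Section~\ref{sec:proofs} should be checked to go through (or to have been stated) for isolated points, where "tube about a point'' is a spherical cap $\{w\in\S^{n-1}\mid\langle\varphi^{(i)},w\rangle>b/\sigma_i\}$ and the curvature tensor is vacuous. Once that is confirmed, formulas~(\ref{PU4}) and~(\ref{bonferroni}) follow by direct substitution, and no further estimation is needed.
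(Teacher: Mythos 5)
Your proposal is correct and follows essentially the same route as the paper: the authors likewise present Theorem \ref{thm:bonferroni} as ``a byproduct of the previous sections,'' obtaining (\ref{PU4}) from the mutual exclusivity of the events $\{Y_i>b\}$ for $b\ge\bc$ and identifying $\bc$ by rerunning the $A_u(b)\subset B_u$ argument of Theorem \ref{thm:critical} with $N_i=T_{\varphi^{(i)}}(\S^{n-1})$, which yields exactly your $h(i,j)$ and the restriction to $\{i:\sigma_i>\bc\}$. The only presentational difference is that, rather than formally substituting $d=0$ into Theorems \ref{thm:volume}--\ref{thm:tube_method} (the edge case you rightly flag as needing verification), the paper sidesteps it by re-deriving the discrete analogue directly, which is immediate since the tube about an isolated point is just a spherical cap.
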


The tube formula when the index set $M$ is finite is the Bonferroni method.
The next proposition corresponds to Proposition \ref{prop:validity}.
Because
$\bar\Phi(c/\sigma_i)\asymp c^{-1}e^{-c^2/2\sigma_i^2}$ and
$\bar G_n(c^2/\bc^2)\asymp c^{n-2}e^{-c^2/2\bc^2}$,
Proposition \ref{prop:bonferroni-validity} affirms that the approximation error in (\ref{bonferroni}) is asymptotically smaller than the leading term.

\begin{proposition}
\label{prop:bonferroni-validity}
The threshold $\bc$ in Theorem \ref{thm:bonferroni} satisfies
\begin{equation}
\label{bcri-bonferroni}
 \bc \le \max_{1\le i<j\le K}\sqrt{\frac{1+\rho_{ij}}{2}}\,\sigma_0<\sigma_0, \ \ \sigma_0=\max_{1\le i\le K}\sigma_i.
\end{equation}
\end{proposition}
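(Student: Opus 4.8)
The plan is to argue straight from the closed form (\ref{hij}), namely $\bc^2=\max_{i\ne j}\sigma_i^2/(1+h(i,j))$ with $h(i,j)=(\sigma_i/\sigma_j-\rho_{ij})_+^2/(1-\rho_{ij}^2)$, by proving the pointwise bound $\sigma_i^2/(1+h(i,j))\le\tfrac12(1+\rho_{ij})\,\sigma_0^2$ for every ordered pair $i\ne j$. Granting this, taking the maximum over the finitely many ordered pairs and using $\rho_{ij}=\rho_{ji}$ gives $\bc^2\le\bigl(\max_{1\le i<j\le K}\tfrac12(1+\rho_{ij})\bigr)\sigma_0^2$, which is (\ref{bcri-bonferroni}) after taking square roots (monotonicity of $\sqrt{\cdot}$ commuting with the finite max). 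Since $|\rho_{ij}|<1$, each factor $\tfrac12(1+\rho_{ij})$ lies strictly below $1$, and a maximum of finitely many numbers that are each $<1$ is itself $<1$; hence $\bc<\sigma_0$.

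To prove the pointwise bound, fix $i\ne j$ and split according to whether the truncation in $h(i,j)$ is active. If $\sigma_i/\sigma_j<\rho_{ij}$ — which forces $\rho_{ij}>0$ — then $h(i,j)=0$ and $\sigma_i<\rho_{ij}\sigma_j$, so $\sigma_i^2/(1+h(i,j))=\sigma_i^2<\rho_{ij}^2\sigma_j^2\le\rho_{ij}^2\sigma_0^2\le\rho_{ij}\sigma_0^2\le\tfrac12(1+\rho_{ij})\sigma_0^2$, where the intermediate steps use $\sigma_j\le\sigma_0$ and $0<\rho_{ij}<1$. If instead $\sigma_i/\sigma_j\ge\rho_{ij}$, the truncation is inactive and a direct computation gives $1+h(i,j)=\bigl(\sigma_i^2+\sigma_j^2-2\sigma_i\sigma_j\rho_{ij}\bigr)/\bigl(\sigma_j^2(1-\rho_{ij}^2)\bigr)$. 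The elementary inequality $\sigma_i^2+\sigma_j^2\ge2\sigma_i\sigma_j$ then gives $\sigma_i^2+\sigma_j^2-2\sigma_i\sigma_j\rho_{ij}\ge2\sigma_i\sigma_j(1-\rho_{ij})>0$, so $1+h(i,j)\ge 2\sigma_i/\bigl(\sigma_j(1+\rho_{ij})\bigr)$, and therefore $\sigma_i^2/(1+h(i,j))\le\tfrac12\sigma_i\sigma_j(1+\rho_{ij})\le\tfrac12\sigma_0^2(1+\rho_{ij})$ because $\sigma_i,\sigma_j\le\sigma_0$.

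I do not expect a genuine obstacle, only careful bookkeeping. The one step that needs attention is the degenerate branch $\sigma_i/\sigma_j<\rho_{ij}$: there $h(i,j)$ collapses to $0$, the crude estimate $\sigma_i^2/(1+h(i,j))\le\sigma_i^2\le\sigma_0^2$ is too weak to beat $\sigma_0^2$, and one has to exploit the extra information $\sigma_i<\rho_{ij}\sigma_j$ that the defining inequality of that branch supplies. The strictness of the conclusion $\bc<\sigma_0$ (rather than merely $\le$) rests, as in Proposition \ref{prop:validity}, on the finiteness of $M$, which ensures that the supremum over pairs of quantities each lying in $[0,1)$ is attained and remains in $[0,1)$.
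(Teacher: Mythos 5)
Your proof is correct and takes essentially the same route as the paper's: a direct, elementary case analysis on the closed form (\ref{hij}), establishing the pairwise bound $\sigma_i^2/(1+h(i,j))\le\tfrac12(1+\rho_{ij})\sigma_0^2$ and then maximizing over the finitely many pairs. The only cosmetic difference is that your AM--GM step on the symmetric numerator $\sigma_i^2+\sigma_j^2-2\rho_{ij}\sigma_i\sigma_j$ handles in one stroke what the paper splits into two separate cases ($\sigma_i/\sigma_j\ge 1$, and $\rho_{ij}<\sigma_i/\sigma_j<1$ via an $i\leftrightarrow j$ symmetry swap), while your treatment of the truncated branch matches the paper's case (iii).
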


The factor $\max_{i<j}\sqrt{(1+\rho_{ij})/2}$ in (\ref{bcri-bonferroni}) is $\bc=\cos\tc$ when $\sigma_i\equiv 1$ (see Remark \ref{rem:critical}), and is strictly less than 1.

\begin{example}
Suppose that $K=3$, $(\sigma_1,\sigma_2,\sigma_3)=(2,1,3)$, $\rho_{12}=\rho_{23}=1/\sqrt{2}$, $\rho_{13}=1/2$.
We have $\bc=3\sqrt{3/7}=1.964$, 
which is less than
\[
\max_{i<j}\sqrt{\frac{1+\rho_{ij}}{2}} \max_{i}\sigma_i=\sqrt{\frac{1+1/\sqrt{2}}{2}}\times 3 = 2.772 
\]
as proved in Proposition \ref{prop:bonferroni-validity}.
When $\sigma_i\le\bc$, $\P(Y_i>b)\equiv 0$ for $b\ge \bc$, which makes no contribution to $\P\bigl(\Ymax > b\bigr)$ in (\ref{PU4}).
In this example, $\sigma_2\le\bc$.
\end{example}

\section{Examples}
\label{sec:examples}

\subsection{Maximum of a Gaussian random process on a circle}

We identify $\Sym(2)$, the set of $2\times 2$ real-symmetric matrices, with $\R^3$ by the correspondence
\[
 z=(z_1,z_2,z_3)\ \leftrightarrow\ %
 Z = \begin{pmatrix}z_1 & z_2/\sqrt{2} \\ z_2/\sqrt{2} & z_3\end{pmatrix}, \qquad
\Vert z\Vert_{\R^3}^2 = \Vert Z\Vert_{\Sym(2)}^2 = \tr\bigl(Z^2\bigr).
\]
Let
\[
 M = \{ \varphi(t) = h(t) h(t)^\top \mid t \in [0,\pi) \}
 \subset \Sym(2), \quad h(t)=(\cos t,\sin t)^\top.
\]
Because $\Vert\varphi(t)\Vert_{\Sym(2)}^2=\tr\bigl((h(t)h(t)^\top)^2\bigr)=\Vert h(t)\Vert^4=1$ and $\varphi(0)=\varphi(\pi-)$,
$M$ is a closed curve in the unit sphere of $\Sym(2)$.
Indeed, $M$ is a circle with radius $1/\sqrt{2}$.
We define a Gaussian random process on $[0,\pi)$ by
\[
 X(t)
 = \sigma(t) \tr\bigl[h(t)h(t)^\top \Xi\bigr]
 = \sigma(t) h(t)^\top \Xi h(t), \quad
 \Xi=\begin{pmatrix}\xi_1 &\xi_2/\sqrt{2} \\ \xi_2/\sqrt{2} & \xi_3\end{pmatrix},
\]
where $\xi_1,\xi_2,\xi_3$ are i.i.d.\ random variables from $\mathcal{N}(0,1)$, and 
\[
 \sigma(t)=e^{\ell(t)}=e^{-m\sin^2 t}, \quad m\ge 0.
\]
Because
$\dot\varphi = \dd\varphi/\dd t = \dot h h^\top + h \dot h^\top$, $\dot h = \dd h/\dd t = (-\sin t,\cos t)^\top$,
the metric is $g=\Vert \dot\varphi\Vert^2=2$, and the volume element of $M$ is $\sqrt{2}\,\dd t$.
The first and second derivatives of $\ell=-m\sin^2 t$ are $\dot\ell=\dd\ell/\dd t=-m\sin 2 t$ and $\ddot\ell=\dd^2\ell/\dd t^2=-2m\cos 2 t$, respectively.
The maximum of the variance is $1$ attained at $t=0$.

The tail probability of the maximum $\Xmax=\max_{t\in[0,\pi)}X(t)$ is approximated by the tube method (\ref{PT2}) as
\[
 \P\bigl(\Xmax > c\bigr) =
 \int_0^\pi \frac{1+(-\ddot\ell+(\dot\ell)^2)/2}{2\pi\,(1+(\dot\ell)^2/2)}
 \,\bar G_2\biggl(\frac{1+(\dot\ell)^2/2}{\sigma^2}c^2\biggr) \,\sqrt{2}\,\dd t
 + O(\bar G_3\bigl(c^2/\bc^2)\bigr), \ \ c\to\infty,
\]
and its Laplace approximation is
\[
 \P\bigl(\Xmax > c\bigr) \sim \sqrt{\frac{1-\ddot\ell(0)/2}{-\ddot\ell(0)/2}}\,\bar\Phi(c)
 = \sqrt{\frac{1+m}{m}}\,\bar\Phi(c), \quad c\to\infty.
\]

The threshold $\bc$ is calculated as follows.
The bases of the tangent space and the normal space are given by
$\dot\varphi = \dot h h^\top + h \dot h^\top$ and $v = \dot h\dot h^\top$, respectively.
A simple calculation yields
\[
 \psi(t) = \varphi(t) - \frac{1}{2}\dot\ell(t)\dot\varphi(t),
\] 
\begin{align*}
\frac{\sigma(t)}{\sigma(s)} - \langle s,\psi(t)\rangle 
=& e^{\ell(t)-\ell(s)} - (h(t)^\top h(s))^2
 + \dot \ell(t) \dot h(t)^\top h(s) h(t)^\top h(s) \\
=& e^{-m(\sin^2 t - \sin^2 s) } - \cos^2(s-t)
 - m \sin(2t) \sin (s-t) \cos (s-t),
\end{align*}
\[
 \Vert P_t^\perp s\Vert = |v(t)^\top\varphi(s)| = (\dot h(t)^\top h(s))^2 = \sin^2(s-t),
\]
\begin{align*}
h(t,s)
=&
 \frac{1}{2} m^2\sin^2 (2t) \\
&
 + \biggl(
 \frac{\{e^{-m(\sin^2 t - \sin^2 s)}
 - \cos^2(s-t) - m \sin (2t) \sin(s-t) \cos(s-t)\}_+}{\sin^2(s-t)} \biggr)^2
 \biggr\}.
\end{align*}
Using these quantities, the threshold $\bc$ is obtained as
\begin{equation}
\label{bc}
 \bc^2 = \sup_{t\ne s}\frac{\sigma(t)^2}{1+h(t,s)}.
\end{equation}

The objective function to be maximized in (\ref{bc}) contains the truncation function $\{\cdot\}_+$.
The set of $t$ where the truncation is active is
\[
 \{ t \in [0,\pi) \mid \sigma(t)/\sigma(s) - \langle s,\psi(t)\rangle < 0,\ \exists s\in [0,\pi) \},
\]
which is $\emptyset$ ($0\le m<1$), $\{0.5\pi\}$ ($m=1$), $[0.431\pi,0.568\pi]$ ($m=1.1$), $[0.366\pi,0.633\pi]$ ($m=1.5$), $[0.266\pi,0.716\pi]$ ($m=10$).
Therefore, we should include the truncation when $m\ge 1$.
Taking this into observation, we conducted the maximization in (\ref{bc}).
Numerical calculation suggests that the supremum in (\ref{bc}) is attained when $t,s\to 0$.
That is, it is conjectured that
\begin{equation}
\label{conjecture}
 \bc^2
 = \limsup_{t,s\to 0,\,t\ne s} \frac{\sigma(t)^2}{1+h(t,s)}
 = \frac{1}{1 + (1+m)^2}.
\end{equation}
This is strictly less than $\max_{t\in M} \sigma(t)^2 = \sigma(0)^2 = 1$.

Figure \ref{fig:tailprob} shows approximations to the tail probability of $\Xmax$
using Monte Carlo simulations (10,000 replications), the tube method, and the Laplace approximation.
The tube method provides accurate approximations in all configurations,
but looks more accurate when $m$ is large.
This observation is consistent with (\ref{conjecture}).
The Laplace approximation works well when $m$ is large.

\begin{figure}
\begin{center}
\begin{tabular}{cc}
\scalebox{0.775}{\includegraphics{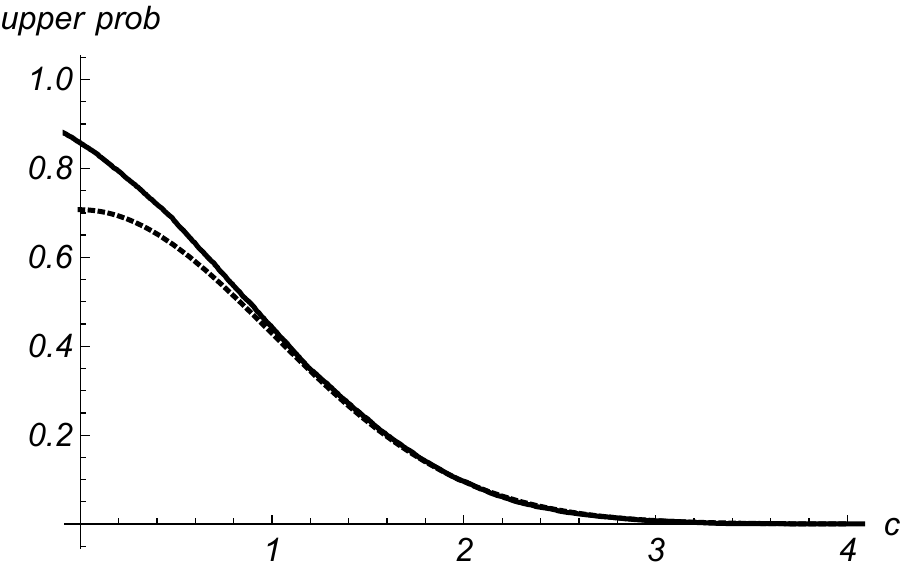}}
&
\scalebox{0.775}{\includegraphics{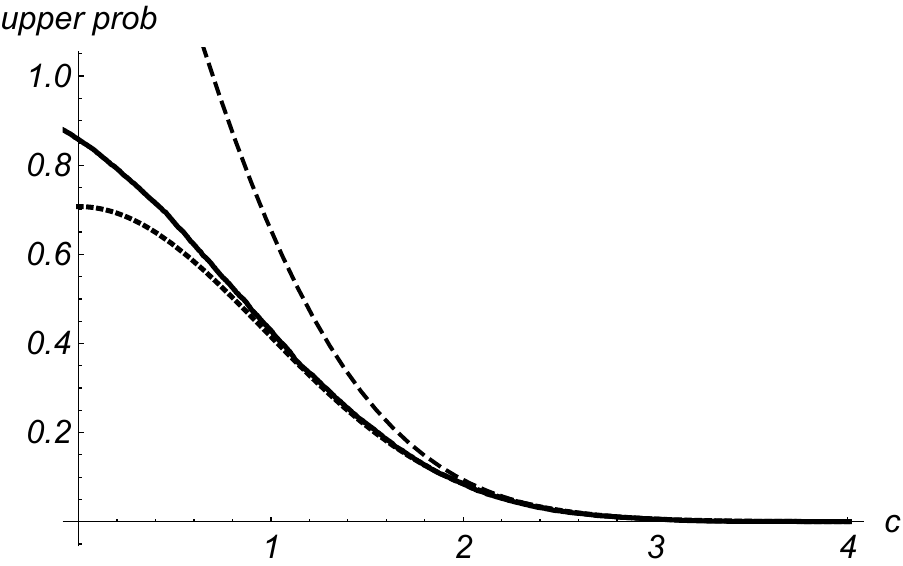}}
\\
{\small $m=0$} & {\small $m=1/16$} \\[3mm]
\scalebox{0.775}{\includegraphics{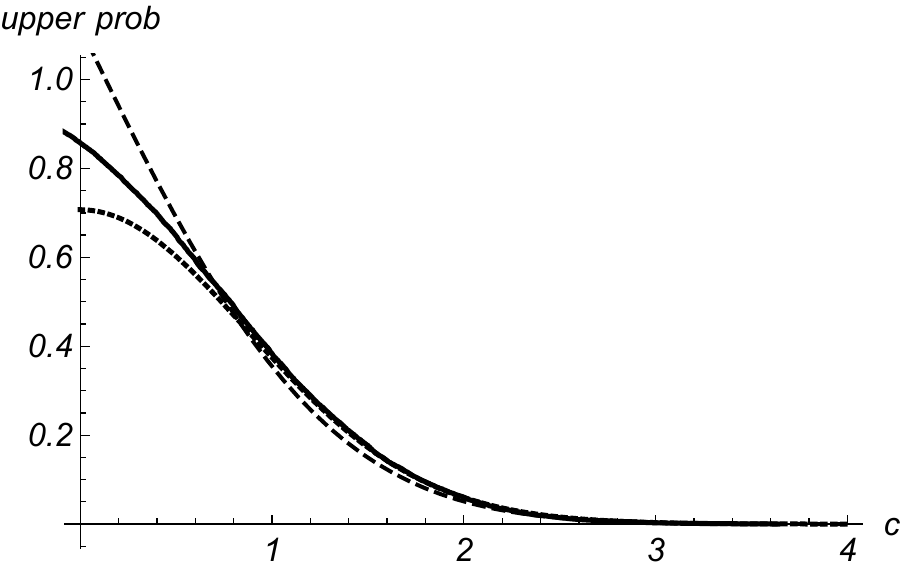}}
&
\scalebox{0.775}{\includegraphics{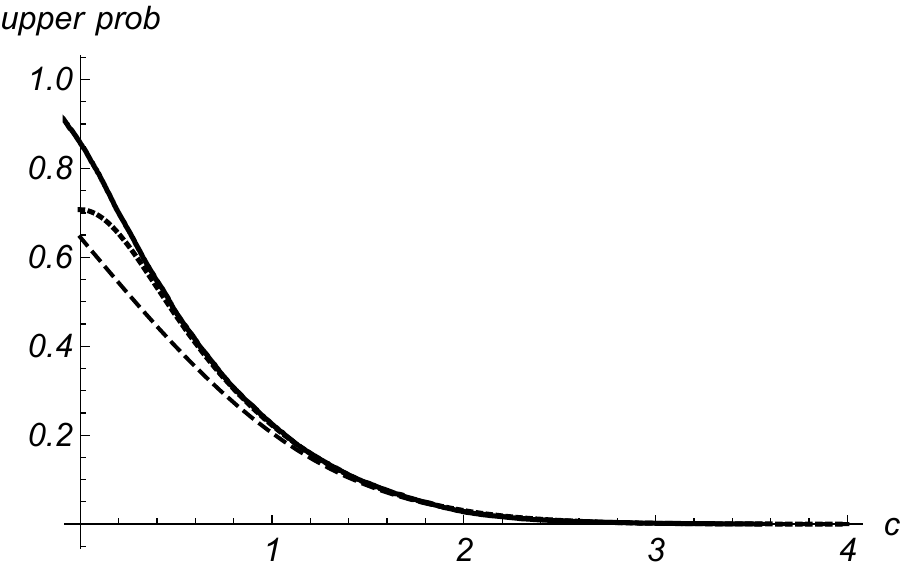}}
\\
{\small $m=1/4$} & {\small $m=3/2$}
\end{tabular}
\caption{Upper-tail probability of $\max_t X(t)$.\newline
\small
\textbf{---}: Simulation, $\bm{\cdots}$: Tube method, $\bm{- -}$: Laplace approximation.
}
\label{fig:tailprob}
\end{center}
\end{figure}

\subsection{Largest eigenvalue of a Wishart matrix: Laplace approximation}

The distribution of a $p\times p$ Wishart matrix with $\nu$ degrees of freedom and matrix parameter $\Lambda$ is denoted by $\mathcal{W}_p(\nu,\Lambda)$.
The largest eigenvalue of the Wishart matrix $\mathcal{W}_p(\nu,\Lambda)$ can be formulated as the maximum of a Gaussian random field.
Hence, its tail probability can be approximated by the tube method.
When the matrix parameter $\Lambda$ is the identity matrix, the tube formula and its approximation error are studied in \cite{kuriki-takemura:2001,kuriki-takemura:2008a}.
Here we derive the Laplace approximation formula for the tail probability when $\Lambda$ is not proportional to the identity matrix.

Without loss of generality, we assume
\[
 \Lambda=\diag(\lambda_1,\ldots,\lambda_p), \quad
 \lambda_1=\cdots=\lambda_q>\lambda_{q+1}\ge\cdots\ge\lambda_p> 0.
\]

The largest eigenvalue $\ell_1$ of the Wishart matrix $\mathcal{W}_p(\nu,\Lambda)$ is the square of the largest singular value $s_1$ of $\Lambda^{\frac{1}{2}}\Xi$, where $\Xi=(\xi_{ij})$ is a $p\times \nu$ random matrix consisting of i.i.d.\ standard Gaussian random variables.
From the definition of the largest singular value, we obtain
\begin{equation}
\label{s1}
 s_1 = \max_{\Vert v\Vert=\Vert w\Vert=1} v^\top \Lambda^{\frac{1}{2}}\Xi w
= \max_{u\in M} \sigma(u) \langle u,\vec(\Xi)\rangle,
\end{equation}
where
\begin{align}
\label{sigma-varphi-M}
& \sigma(u) = \sqrt{v^\top\Lambda v}, \qquad
  u=\varphi(v,w)=\frac{(\Lambda^{\frac{1}{2}}\otimes I_{\nu})(v\otimes w)}{\sqrt{v^\top \Lambda v}}, \\
& M = \{ \varphi(v,w)\in\R^{p\nu} \mid \Vert v\Vert=\Vert w\Vert=1 \}, \nonumber \\
& \vec(\Xi) = (\xi_{11},\xi_{21},\ldots,\xi_{p,1},\xi_{12},\ldots,\xi_{p,\nu})^\top. \nonumber
\end{align}
The maximizer of $\sigma(u)$ is
\[
 M_0 =\biggl\{ \varphi\biggl(\begin{pmatrix} v_1 \\ 0 \end{pmatrix},w\biggr) \in\R^{p\nu} \mid v_1\in\R^q,\,w\in\R^\nu,\,\Vert v_1\Vert=\Vert w\Vert=1 \biggr\}.
\]
The dimensions of $M$ and $M_0$ are
\begin{equation}
\label{dd0}
 d=\dim M=p+\nu-2, \quad d_0=\dim M_0=q+\nu-2.
\end{equation}

Fix a point $u_0=\varphi(v_0,w_0)\in M_0$,
$v_0=(1,0,\ldots,0)^\top$, $w_0=(1,0,\ldots,0)^\top$,
and take local coordinates of $M$ by
\[
\textstyle
 v=\Bigl(\sqrt{1-\sum_{i=1}^{p-1} (t^i)^2},t^1,\ldots,t^{p-1}\Bigr)^\top, \quad
 w=\Bigl(\sqrt{1-\sum_{i=p}^{p+\nu-2} (t^i)^2},t^p,\ldots,t^{p+\nu-2}\Bigr)^\top.
\]
Then, $u_0=\varphi(v,w)|_{t=0}$.

A simple calculation yields the metric tensor $G$ and the Hesse matrix $C$ of $\ell=\log\sigma$ evaluated at $u_0$ as
\begin{equation}
\label{GC}
 G = \begin{pmatrix}
 I_{q-1} & 0 & 0 \\
 0 & \frac{1}{\lambda_1}\Lambda_2 & 0 \\
 0 & 0 & I_{\nu-1} \end{pmatrix}, \quad
 C = \begin{pmatrix}
 0 & 0 & 0 \\
 0 & I_{p-q}-\frac{1}{\lambda_1}\Lambda_2 & 0 \\
 0 & 0 & 0 \end{pmatrix}, \quad
 \Lambda_2=\diag(\lambda_{q+1},\ldots,\lambda_p).
\end{equation}

We substitute (\ref{dd0}) and (\ref{GC}) into (\ref{laplace}).
Because the integrand of $\int_{M_0} \dd u_0$ is constant owing to symmetry, the integration is replaced by multiplication with the constant
\[
 \Vol_{d_0}(M_0)
= \frac{1}{2}\Vol_{d_0}\bigl(\S^{q-1}\times\S^{\nu-1}\bigr)
= \frac{\Omega_q\Omega_\nu}{\Omega_1}.
\]
Hence,
\[
 \P(s_1 > c) \sim
 \frac{1}{\Omega_{q+\nu-1}}
 \frac{\Omega_q\Omega_\nu}{\Omega_1}
 \frac{\det(I_{p-q})^{\frac{1}{2}}}{\det\bigl(I_{p-q}-\frac{1}{\lambda_1}\Lambda_2\bigr)^{\frac{1}{2}}}
 \,\bar G_{(q+\nu-2)+1}(c^2/\lambda_1).
\]
\begin{theorem}
\label{thm:wishart}
The tail probability of the largest eigenvalue $\ell_1$ of the Wishart matrix
$\mathcal{W}_p(\nu,\Lambda)$, where $\Lambda$ has the eigenvalues $\lambda_1=\cdots=\lambda_q>\lambda_{q+1}\ge\cdots\ge\lambda_p>0$, is
\[
 \P(\ell_1 > c) \sim
  \frac{1}{\prod_{i=q+1}^p(1-\lambda_i/\lambda_1)^{1/2}}
 \frac{\Omega_q\Omega_\nu}{\Omega_{q+\nu-1}\Omega_1}
 \,\bar G_{q+\nu-1}(c/\lambda_1), \quad c\to\infty.
\]
\end{theorem}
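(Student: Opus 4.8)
The plan is to recognize $\ell_1=s_1^2$, apply the Laplace approximation of Theorem~\ref{thm:laplace} to the largest singular value $s_1$ of $\Lambda^{\frac12}\Xi$, and then pass from $s_1$ to $\ell_1$ by a change of variable $c\mapsto\sqrt c$. Most of the geometric ingredients are assembled in the discussion preceding the statement: \eqref{s1}--\eqref{sigma-varphi-M} exhibit $s_1=\max_{u\in M}\sigma(u)\langle u,\vec(\Xi)\rangle$ with $\vec(\Xi)\sim\mathcal N_{p\nu}(0,I_{p\nu})$, so $X(u)=\sigma(u)\langle u,\vec(\Xi)\rangle$ is a field of the canonical form \eqref{Xu}. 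First I would dispatch the routine checks that this field meets the standing hypotheses: $M=\{\varphi(v,w):\Vert v\Vert=\Vert w\Vert=1\}$ is the image of $\S^{p-1}\times\S^{\nu-1}$ under a smooth immersion (after radial reprojection by the invertible $\Lambda^{\frac12}$, $\varphi$ is $(\tilde v,w)\mapsto\tilde v\otimes w$, which identifies only the antipodal pair $(\tilde v,w)\sim(-\tilde v,-w)$), hence a $C^2$-closed submanifold of $\S^{p\nu-1}$; its linear hull is all of $\R^{p\nu}$ since rank-one tensors span; $d=\dim M=p+\nu-2<p\nu-1$ for $p,\nu\ge2$; and $\sigma(u)=\sqrt{v^\top\Lambda v}\ge\sqrt{\lambda_p}>0$ is $C^2$.

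Next I would pin down the maximizer set. Since $v^\top\Lambda v=\sum_i\lambda_i v_i^2\le\lambda_1\sum_i v_i^2=\lambda_1$ with equality iff $v$ is supported on its first $q$ coordinates, $\sigma$ attains its maximum $\sigma_0=\sqrt{\lambda_1}$ exactly on $M_0$, which is again (a $\pm$-quotient of) $\S^{q-1}\times\S^{\nu-1}$, hence a closed $C^2$-manifold of dimension $d_0=q+\nu-2$; this is precisely the hypothesis required by Theorem~\ref{thm:laplace}. The heart of the argument is the local computation at a point $u_0\in M_0$: in the coordinates $t=(t^1,\dots,t^{p+\nu-2})$ introduced in the text one evaluates the metric $g_{ij}=\langle\varphi_i,\varphi_j\rangle$ and, with $\ell=\tfrac12\log(v^\top\Lambda v)$, the Hesse matrix $c_{ij}=-\nabla_i\nabla_j\ell+\nabla_i\ell\,\nabla_j\ell$ at $u_0$. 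Because $u_0$ is an interior maximum of $\sigma$, $\nabla\ell$ vanishes there, so $c_{ij}=-\partial_i\partial_j\ell$, and the Kronecker-product structure $(\Lambda^{\frac12}\otimes I_\nu)(v\otimes w)$ together with the normalization $\sqrt{v^\top\Lambda v}$ forces the block-diagonal forms of $G_{u_0}$ and $C_{u_0}$ recorded in \eqref{GC}. I expect this bookkeeping to be the main obstacle: it is conceptually straightforward but the normalization factor and the Kronecker derivatives make it the most error-prone step; everything downstream is essentially algebra.

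With \eqref{GC} in hand, from the middle blocks $C_{u_0}G_{u_0}^{-1}$ is a $d\times d$ matrix of rank $p-q$ whose nonzero eigenvalues are $\lambda_1/\lambda_i-1$, $i=q+1,\dots,p$; hence $\det(I_d+C_{u_0}G_{u_0}^{-1})=\prod_{i=q+1}^p\lambda_1/\lambda_i$ while $\tr_{d-d_0}(C_{u_0}G_{u_0}^{-1})=\tr_{p-q}(C_{u_0}G_{u_0}^{-1})=\prod_{i=q+1}^p(\lambda_1/\lambda_i-1)$, so the ratio appearing in \eqref{laplace} equals $\prod_{i=q+1}^p(1-\lambda_i/\lambda_1)^{-1/2}$. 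The orthogonal group acting on the first $q$ coordinates of $v$ and on $w$ preserves $M$ and $\sigma$, hence preserves the invariants $\det$ and $\tr_k$ of $C_uG_u^{-1}$, so this ratio is constant on $M_0$ and the integral in \eqref{laplace} is that constant times $\Vol_{d_0}(M_0)=\tfrac12\Vol_{d_0}(\S^{q-1}\times\S^{\nu-1})=\Omega_q\Omega_\nu/\Omega_1$. Applying Theorem~\ref{thm:laplace} with $d_0+1=q+\nu-1$ gives
\[
 \P(s_1>c)\sim\frac{1}{\prod_{i=q+1}^p(1-\lambda_i/\lambda_1)^{1/2}}\,\frac{\Omega_q\Omega_\nu}{\Omega_{q+\nu-1}\Omega_1}\,\bar G_{q+\nu-1}(c^2/\lambda_1),\qquad c\to\infty.
\]
Finally, since $\ell_1=s_1^2$ we have $\P(\ell_1>c)=\P(s_1>\sqrt c)$, so replacing $c$ by $\sqrt c$ turns $\bar G_{q+\nu-1}(c^2/\lambda_1)$ into $\bar G_{q+\nu-1}(c/\lambda_1)$ and yields the asserted formula.
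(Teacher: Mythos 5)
Your proposal is correct and follows essentially the same route as the paper: identify $s_1$ as the maximum of the canonical field \eqref{Xu} via \eqref{s1}--\eqref{sigma-varphi-M}, compute $G_{u_0}$ and $C_{u_0}$ as in \eqref{GC}, evaluate $\det(I_d+C_uG_u^{-1})$ and $\tr_{d-d_0}(C_uG_u^{-1})$ from the middle blocks, replace the integral over $M_0$ by $\Vol_{d_0}(M_0)=\Omega_q\Omega_\nu/\Omega_1$ by symmetry, apply Theorem~\ref{thm:laplace}, and pass from $s_1$ to $\ell_1=s_1^2$. Your extra checks (that $M$ is a closed $C^2$-submanifold with full linear hull, the antipodal identification explaining the factor $1/2$, and $d<p\nu-1$ for $p,\nu\ge2$) are consistent with, and slightly more explicit than, the paper's treatment.
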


\begin{remark}
When $\nu=1$, Theorem \ref{thm:wishart} reduces to the formula for the weighted sum of the chi-square random variables:
\[
 \P\Biggl(\sum_{i=1}^p \lambda_i \xi_i^2 > c\Biggr) \sim 
 \frac{1}{\prod_{i=q+1}^p(1-\lambda_i/\lambda_1)^{1/2}}
 \,\bar G_{q}(c/\lambda_1), \quad c\to\infty,
\]
where $\xi_i^2\sim\chi^2_1$ i.i.d.\ (\cite[Theorem 2]{beran:1975}).
For exact calculation, see \cite{koyama-takemura:2016} and references therein.
\end{remark}

\subsection{Largest eigenvalue of a $2\times 2$ Wishart matrix}

When the size of Wishart matrix is small, we can derive the tube method approximation without using Laplace's method.
We consider the Wishart distribution $\mathcal{W}_2(\nu,\Lambda)$, $\Lambda=\diag(\lambda_1,\lambda_2)$ ($\lambda_1\ge\lambda_2$).
Define $\sigma(u)$, $\varphi(v,w)$ and $M$ as in (\ref{sigma-varphi-M}) with $p=2$, and consider the distribution $s_1$ in (\ref{s1}).

Fix a point $u_0=\varphi(v_0,w_0)\in M$,
$v_0=\bigl(\cos\theta,\sin\theta\bigr)^\top$,
$w_0=(1,0,\ldots,0)^\top$,
and take local coordinates of $M$ about $u$ by
\[
\textstyle
 v=\bigl(\cos t_1,\sin t_1\bigr)^\top, \quad
 w=\Bigl(\sqrt{1-\sum_{i=2}^{\nu} t_i^2},t_2,\ldots,t_{\nu}\Bigr)^\top.
\]
Then, $u_0=\varphi(v,w)|_{t_1=\theta,t_2=\cdots=t_\nu=0}$.

A simple calculation yields
\begin{equation}
\label{q1}
\sigma^2 = \lambda_1\cos^2\theta + \lambda_2\sin^2\theta, \qquad
q = \frac{1+\Vert\nabla\ell\Vert^2}{\sigma^2} = \lambda_1^{-1}\cos^2\theta+\lambda_2^{-1}\sin^2\theta,
\end{equation}
and
\[
 G = (g_{ij}) = \diag\bigl(g_{11},\underbrace{1,\ldots,1}_{\nu-1}\bigr), \qquad 
  g_{11} = \frac{\lambda_1\lambda_2}{\sigma^4}, \qquad
 \G = (\g_{ij}) = G + C = I_{\nu}.
\]
The volume element of $M$ is
\begin{equation}
\label{vol_element}
 \sqrt{g_{11}}\dd\theta\,\dd w =
 \frac{\sqrt{\lambda_1\lambda_2}}{\sigma^2}\dd\theta\,\dd w,
\end{equation}
where $\dd w$ is the volume element of $\S^{\nu-1}$, and
\begin{equation}
\label{CG}
 \det(I_{\nu}+CG^{-1}) = \det(G+C)\det(G)^{-1} = g_{11}^{-1} =
 \frac{\sigma^4}{\lambda_1\lambda_2}.
\end{equation}

According to (\ref{R}), the curvature tensor is
\[
 R_{ij;kl} = \begin{cases}
 \delta_{ik}\delta_{jl}-\delta_{il}\delta_{jk} & (\mbox{if $i,j,k,l\ge 2$}), \\
 0 & (\mbox{otherwise}), \end{cases}
\]
where $\delta_{ij}$ is the Kronecker delta.
Suppose that $i<j$ and $k<l$.
If $i,k\ge 2$, then both $R_{ij;kl}$ and $g_{ik}g_{jl}-g_{il}g_{jk}$ are equal to $\delta_{ik}\delta_{jl}-\delta_{il}\delta_{jk}$, and hence $R_{ij;kl} - (g_{ik}g_{jl}-g_{il}g_{jk})$ vanishes.
If $i=1<k$ or $k=1<i$, then $R_{ij;kl}=0$ and $g_{ik}g_{jl}-g_{il}g_{jk}=0$.
Remaining case is
\[
 R_{ij;kl} - (g_{ik}g_{jl}-g_{il}g_{jk}) =
 -g_{11} \quad \mbox{when } i=k=1<j=l.
\]
Therefore, the function $\zeta_e$ in Theorem \ref{thm:volume} is
\begin{equation}
\label{zeta1}
 \zeta_e =
\begin{cases}
 1 & (e=0), \\
 -(\nu-1) g_{11} & (e=2), \\
 0 & (\mbox{otherwise}).
\end{cases}
\end{equation}
Substituting (\ref{q1}), (\ref{vol_element}), (\ref{CG}), and (\ref{zeta1}) into (\ref{PT2}), we have the approximation formula for the largest eigenvalue $\ell_1$ as
\begin{align}
\label{Ptube}
\Ptube(\ell_1 > c)
=& \Ptube(s_1 > \sqrt{c}) \\
=& \frac{1}{2}\int_{0}^{2\pi}\int_{\S^{\nu-1}}
 \frac{\sqrt{\lambda_1\lambda_2}}{\sigma^2}\dd\theta\,\dd w
 \frac{1}{\Omega_{\nu+1}}
 \frac{\sigma^4}{\lambda_1\lambda_2}
 (\sigma^2 q)^{-\frac{1}{2}(\nu+1)} \bar G_{\nu+1}(q c) \nonumber \\
& + \frac{1}{2}\int_{0}^{2\pi}\int_{\S^{\nu-1}}
 \frac{\sqrt{\lambda_1\lambda_2}}{\sigma^2}\dd\theta\,\dd w
 \frac{1}{2\pi\Omega_{\nu-1}}
  \frac{\sigma^4}{\lambda_1\lambda_2}
 (\sigma^2 q)^{-\frac{1}{2}(\nu-1)} \bar G_{\nu-1}(q c) \nonumber \\
& \qquad\qquad
 \times\biggl(-(\nu-1)\frac{\lambda_1\lambda_2}{\sigma^4}\biggr) \nonumber \\
=& \frac{\Omega_{\nu}}{2\Omega_{\nu+1}}
 \int_{0}^{2\pi}
   (\sigma^2 q)^{-\frac{1}{2}(\nu+1)} \biggl[ \frac{\sigma^2}{\sqrt{\lambda_1\lambda_2}}\bar G_{\nu+1}(q c)
 - \sqrt{\lambda_1\lambda_2}q \bar G_{\nu-1}(q c) \biggr] \dd\theta.
\nonumber
\end{align}

Next, we obtain the threshold $\bc$.
We prepare a lemma for this purpose.
\begin{lemma}
\label{lem:wishart-critical}
For $a>0$,
\[
 \inf_{|y|<1}\frac{(a-b y)_+^2}{1-y^2} = (a^2-b^2)_+.
\]
\end{lemma}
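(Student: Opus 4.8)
The plan is to prove Lemma~\ref{lem:wishart-critical} by elementary calculus, splitting into cases according to whether the truncation $(a-by)_+$ is active at the optimal $y$. First I would substitute $s = a - by$ and reduce to minimizing $f(y) = (a-by)_+^2/(1-y^2)$ over $y \in (-1,1)$ for fixed $a > 0$ and arbitrary real $b$. Since $a > 0$, the numerator $(a-by)^2$ is continuous, positive except possibly at the single point $y_0 = a/b$ (when $|y_0| < 1$), and the denominator $1 - y^2 \to 0^+$ as $|y| \to 1$, so $f$ tends to $+\infty$ at the endpoints; hence the infimum is attained at an interior critical point or at a point where $a - by \le 0$ (where $f$ vanishes).

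Next I would handle the two cases. \textbf{Case 1: $a^2 \le b^2$, i.e.\ $|a/b| \le 1$.} Then $y_0 = a/b \in [-1,1]$, and at $y = y_0$ we have $a - by_0 = 0$, so $f(y_0) = 0$; since $f \ge 0$ always, the infimum is $0 = (a^2-b^2)_+$. (If $y_0 = \pm 1$ one takes $y \to y_0$ from inside, still giving infimum $0$.) \textbf{Case 2: $a^2 > b^2$, i.e.\ $|b| < a$.} Then for all $|y| < 1$ we have $a - by \ge a - |b| > 0$, so the truncation is never active and $f(y) = (a-by)^2/(1-y^2)$ is smooth on $(-1,1)$. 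Setting the derivative to zero: the condition $\frac{d}{dy}\log f = 0$ gives $\frac{-2b}{a-by} + \frac{2y}{1-y^2} = 0$, i.e.\ $-b(1-y^2) + y(a-by) = 0$, which simplifies to $ay - b = 0$, so $y^* = b/a \in (-1,1)$. Evaluating, $a - by^* = a - b^2/a = (a^2-b^2)/a$ and $1 - (y^*)^2 = (a^2-b^2)/a^2$, hence $f(y^*) = \frac{((a^2-b^2)/a)^2}{(a^2-b^2)/a^2} = a^2 - b^2 = (a^2-b^2)_+$. A second-derivative or boundary-behaviour check confirms this critical point is the minimum since $f \to +\infty$ at $y = \pm 1$.

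Combining the two cases gives $\inf_{|y|<1} f(y) = (a^2-b^2)_+$ in all situations, proving the lemma. The only mild subtlety is to verify that in Case~2 the unique interior critical point is genuinely a minimum rather than a maximum or saddle; this follows immediately because $f$ is continuous on $(-1,1)$, blows up at both endpoints, and has exactly one critical point in the open interval, so that point must be the global minimum. No step here is a genuine obstacle — the lemma is a short calculus exercise — and the case split driven by the $\{\cdot\}_+$ truncation is the only thing requiring care.
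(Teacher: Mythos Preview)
Your proof is correct and follows essentially the same approach as the paper: split according to whether $|b|\le a$ or $|b|>a$, in the former case remove the truncation and locate the interior minimizer at $y=b/a$, and in the latter observe that $y=a/b\in(-1,1)$ makes the numerator vanish. You supply more detail (the derivative computation and the endpoint blow-up argument to confirm the critical point is a minimum), but the structure is identical to the paper's proof.
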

\begin{proof}
When $|b|\le a$, $a-b y\ge 0$ for all $|y|<1$, and the left-hand side is
\[
 \inf_{|y|<1}\frac{(a-b y)^2}{1-y^2} = \frac{(a-b y)^2}{1-y^2}\Big|_{y=b/a} = a^2-b^2.
\]
When $|b|>a$, $a-b y=0$ at $-1<y=a/b<1$, and the left-hand side is 0.
\end{proof}

As in (\ref{sigma-varphi-M}), let $u=\varphi(v,w)$ and $\uu=\varphi(\vv,\ww)$, where
\[
  \varphi(v,w)=\frac{(\Lambda^{\frac{1}{2}}\otimes I_{\nu})(v\otimes w)}{\sigma(u)}, \quad \sigma(u)=\sqrt{v^\top \Lambda v}.
\]
We will evaluate
\[
 1/\bc^2 = \inf_{u\ne\uu}\frac{1}{\sigma(u)^2}\biggl\{
 1+\Vert\nabla\ell(u)\Vert^2 + \biggl( \frac{\{ \sigma(u)/\sigma(\uu)
   - \langle \uu,u-\nabla\ell(u)\rangle\}_+}{\Vert P_u^\perp \uu \Vert}\biggr)^2 \biggr\}.
\]
Let $v = (\cos t,\sin t)^\top$, $\vv = (\cos s,\sin s)^\top$.
Then, simple calculations yield
\[
 \sigma(u)=\sqrt{\lambda_1\cos^2 t+\lambda_2\sin^2 t}, \quad
 \sigma(\uu)=\sqrt{\lambda_1\cos^2 s+\lambda_2\sin^2 s},
\]
\begin{align*}
 & \langle \uu,u\rangle = \frac{\lambda_1\cos t\cos s+\lambda_2\sin t\sin s}{\sigma(u)\sigma(\uu)}(\ww^\top w), \\
 & \langle \uu,\nabla\ell(u)\rangle = \frac{(\lambda_1-\lambda_2)\sin(2 t)\sin(t-s)}{2\sigma(u)\sigma(\uu)}(\ww^\top w),
\end{align*}
\begin{align*}
& \Vert P^\perp_{u}\uu\Vert = \frac{\sqrt{\lambda_1\lambda_2}|\sin(t-s)|}{\sigma(u)\sigma(\uu)}\sqrt{1-(\ww^\top w)^2}, \qquad
\Vert\nabla\ell(u)\Vert^2 = \frac{(\lambda_1-\lambda_2)^2\sin^2(2 t)}{4\lambda_1\lambda_2},
\end{align*}
and hence,
\begin{align*}
& \frac{\{ \sigma(u)/\sigma(\uu)
   - \langle \uu,\psi(u)\rangle\}_+^2}{\Vert P_u^\perp \uu \Vert^2} \\
& = \frac{\{\sigma(u)^2 -[(\lambda_1\cos t\cos s+\lambda_2\sin t\sin s)-
  \frac{1}{2}(\lambda_1-\lambda_2)\sin(2 t)\sin(t-s)](\ww^\top w)\}_+^2}{\lambda_1\lambda_2\sin^2(t-s)(1-(\ww^\top w)^2)}.
\end{align*}
By means of Lemma \ref{lem:wishart-critical},
\begin{align*}
& \inf_{w\ne\ww}\frac{\{ \sigma(u)/\sigma(\uu)
   - \langle \uu,\psi(u)\rangle\}_+^2}{\Vert P_u^\perp \uu \Vert^2} \\
& = \frac{\{\sigma(u)^4 -[(\lambda_1\cos t\cos s+\lambda_2\sin t\sin s)-
  \frac{1}{2}(\lambda_1-\lambda_2)\sin(2 t)\sin(t-s)]^2\}_+}{\lambda_1\lambda_2\sin^2(t-s)} \\
& = \frac{\{\sigma(u)^4\sin^2(t-s)\}_+}{\lambda_1\lambda_2\sin^2(t-s)} = \frac{\sigma(u)^4}{\lambda_1\lambda_2},
\end{align*}
which is independent of $\vv$.
Therefore, we have the threshold $\bc$ as follows.
\[
 1/\bc^2 = \inf_{u}\frac{1}{\sigma(u)^2}\biggl\{
 1+\frac{(\lambda_1-\lambda_2)^2\sin^2(2 t)}{4\lambda_1\lambda_2}+\frac{\sigma(u)^4}{\lambda_1\lambda_2}\biggr\}
 = \inf_{u}\biggl(\frac{1}{\lambda_1}+\frac{1}{\lambda_2}\biggr)
 = \frac{1}{\lambda_1}+\frac{1}{\lambda_2}.
\]
The results of this subsection are summarized as follows.
\begin{theorem}
The upper probability of the largest eigenvalue $\ell_1$ of the $2\times 2$ Wishart matrix $\mathcal{W}_2(\nu,\Lambda)$, $\Lambda=\diag(\lambda_1,\lambda_2)$, is
\[
 \P(\ell_1>c) = \Ptube(\ell_1>c) + O(\bar G_{2\nu}(c/\bc^2)), \ \ c\to\infty,
\]
where
$\Ptube(\ell_1>c)$ is given in (\ref{Ptube}), and
\[
 \bc = \sqrt{\frac{\lambda_1\lambda_2}{\lambda_1+\lambda_2}}.
\]
\end{theorem}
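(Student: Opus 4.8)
The plan is to specialize Theorems \ref{thm:tube_method} and \ref{thm:critical} to the Gaussian field representation of the largest singular value $s_1$ recorded in (\ref{s1})--(\ref{sigma-varphi-M}) with $p=2$, and then simply collect the local computations already carried out in this subsection. Recall that $\P(\ell_1>c)=\P(s_1>\sqrt c)$, that $s_1=\max_{u\in M}\sigma(u)\langle u,\vec(\Xi)\rangle$ with $\vec(\Xi)\sim\mathcal N_{2\nu}(0,I_{2\nu})$, and that for $p=2$ the index set $M$ is a closed $C^2$-manifold of dimension $d=p+\nu-2=\nu$ sitting in $\S^{2\nu-1}$ (the properness requirement $d<n-1$ forces $\nu\ge2$; the degenerate case $\nu=1$ is the weighted chi-square already discussed). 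Its linear hull is $\R^{2\nu}$ because $(\Lambda^{\frac12}\otimes I_\nu)$ is invertible and the rank-one vectors $v\otimes w$ span $\R^2\otimes\R^\nu$. Hence the ambient dimension is $n=2\nu$, Theorem \ref{thm:tube_method} applies verbatim to give $\P(s_1>a)=\Ptube(s_1>a)+O(\bar G_{2\nu}(a^2/\bc^2))$, and putting $a=\sqrt c$ yields the stated error term $O(\bar G_{2\nu}(c/\bc^2))$.

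For the main term I would substitute into (\ref{PT2}) the explicit local data of this subsection: the ratio $q=(1+\Vert\nabla\ell\Vert^2)/\sigma^2=\lambda_1^{-1}\cos^2\theta+\lambda_2^{-1}\sin^2\theta$ from (\ref{q1}); the metric $G=\diag(g_{11},1,\dots,1)$ with $g_{11}=\lambda_1\lambda_2/\sigma^4$ and $\G=G+C=I_\nu$; the volume element $\sqrt{g_{11}}\,\dd\theta\,\dd w$ from (\ref{vol_element}); the Jacobian factor $\det(I_\nu+C G^{-1})=g_{11}^{-1}=\sigma^4/(\lambda_1\lambda_2)$ from (\ref{CG}); and the curvature contributions $\zeta_0=1$, $\zeta_2=-(\nu-1)g_{11}$, $\zeta_e=0$ otherwise, from (\ref{zeta1}). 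Since none of the surviving integrand factors depends on $w\in\S^{\nu-1}$, the $w$-integral contributes $\Vol_{\nu-1}(\S^{\nu-1})=\Omega_\nu$, while the leading $\tfrac12$ accounts for the two-to-one parametrization $(v,w)\mapsto\varphi(v,w)$; collecting the $e=0$ term (with $\Omega_{\nu+1}$, $\bar G_{\nu+1}$) and the $e=2$ term (with $(2\pi)\Omega_{\nu-1}$, $\bar G_{\nu-1}$, and the factor $\zeta_2$) and simplifying gives exactly (\ref{Ptube}). One can check afterwards, using the value of $\bc$ below, that $q(u)\bc^2=(\lambda_2\cos^2\theta+\lambda_1\sin^2\theta)/(\lambda_1+\lambda_2)<1$ for all $\theta$, so $\Mc=M$ and no truncation of the integration domain is needed.

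It remains to identify $\bc$ via Theorem \ref{thm:critical}. Writing $v=(\cos t,\sin t)^\top$, $\vv=(\cos s,\sin s)^\top$, I would expand each ingredient of $h(u,\uu)$ in (\ref{h}) in terms of $t,s$ and the single scalar $y=\ww^\top w\in[-1,1)$: $\sigma(u)^2=\lambda_1\cos^2 t+\lambda_2\sin^2 t$, the inner products $\langle\uu,u\rangle$ and $\langle\uu,\nabla\ell(u)\rangle$, the projection length $\Vert P_u^\perp\uu\Vert$, and $\Vert\nabla\ell(u)\Vert^2=(\lambda_1-\lambda_2)^2\sin^2(2t)/(4\lambda_1\lambda_2)$, as in the displayed formulas of the subsection. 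For fixed $t,s$ the truncated term of $h$ is $(a-by)_+^2/(1-y^2)$ up to a positive prefactor, so Lemma \ref{lem:wishart-critical} evaluates $\inf_{w\ne\ww}$ of it; a product-to-sum trigonometric identity then collapses the numerator so that this infimum equals $\sigma(u)^4/(\lambda_1\lambda_2)$, independently of $\vv$. Minimizing the remaining one-variable expression $\sigma(u)^{-2}\{1+\Vert\nabla\ell(u)\Vert^2+\sigma(u)^4/(\lambda_1\lambda_2)\}$ over $u$, substitution of the formulas above shows it is in fact constant in $t$ and equals $1/\lambda_1+1/\lambda_2$, so $\bc^2=\lambda_1\lambda_2/(\lambda_1+\lambda_2)$; note $\bc<\sqrt{\lambda_1}=\sigma_0$, consistent with Proposition \ref{prop:validity}. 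The \emph{main obstacle} is precisely these two cancellations — that $\inf_{w\ne\ww}$ loses its $s$-dependence, and that the final minimization is flat in $t$ — each of which I expect to rest on trigonometric identities special to the $2\times2$ geometry rather than on a structural reason, so I would verify them by a direct but careful computation.
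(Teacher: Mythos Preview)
Your proposal is correct and follows essentially the same route as the paper: specialize Theorem~\ref{thm:tube_method} with $n=2\nu$, $d=\nu$ to obtain the error term, plug the local data (\ref{q1})--(\ref{zeta1}) into (\ref{PT2}) to recover (\ref{Ptube}), and compute $\bc$ via Theorem~\ref{thm:critical} by first minimizing over $w,\ww$ using Lemma~\ref{lem:wishart-critical} and then over $t$. Your additional check that $q(u)\bc^2<1$ for all $u$ (hence $\Mc=M$) is a nice touch not made explicit in the paper.
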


Figure \ref{fig:wishart} shows the tail probability of the largest eigenvalue of the $2\times 2$ Wishart matrix $\mathcal{W}_2(4,\Lambda)$ for various $\Lambda$'s.
The simulation is based on 10,000 replications.
The tube formula shows a good performance for estimating the tail probability of the regions used in testing hypotheses.
The Laplace approximation also works well when largest eigenvalue is dominating.
This is consistent with the fact that when $\lambda_1=1$, $\bc = \sqrt{1+1/\lambda_2}$ is a decreasing function in $\lambda_2$.

\begin{figure}
\begin{center}
\begin{tabular}{cc}
\scalebox{0.775}{\includegraphics{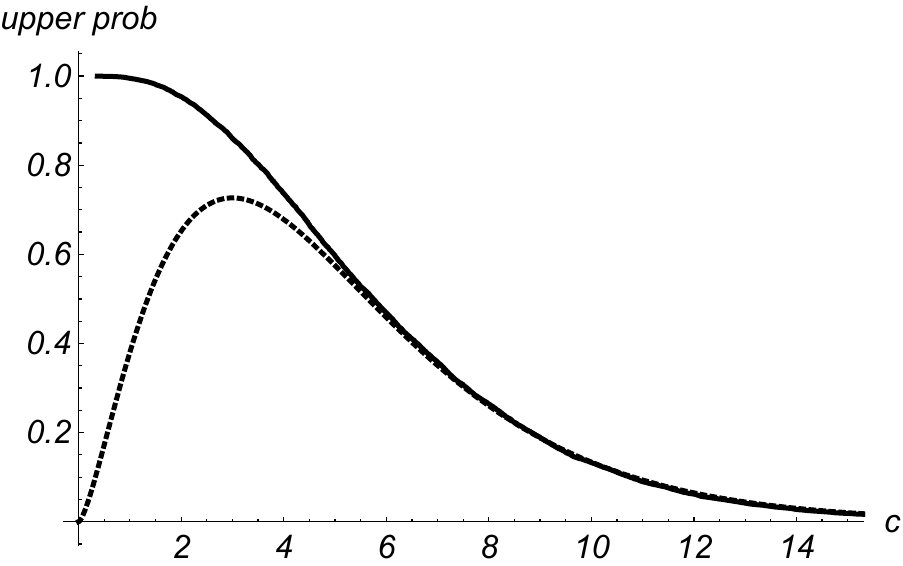}}
&
\scalebox{0.775}{\includegraphics{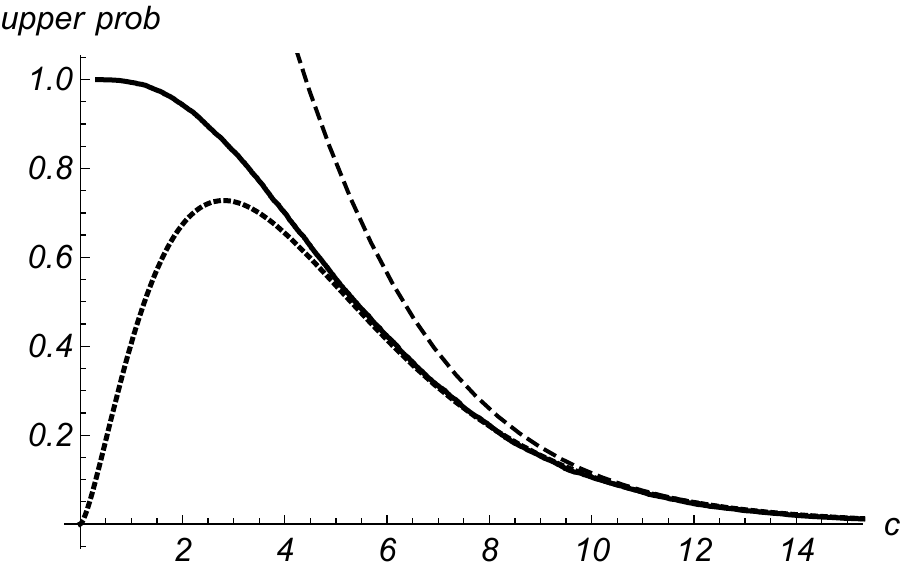}}
\\
{\small $\Lambda=\diag(1,1)$} & {\small $\Lambda=\diag(1,7/8)$}
\\[3mm]
\scalebox{0.775}{\includegraphics{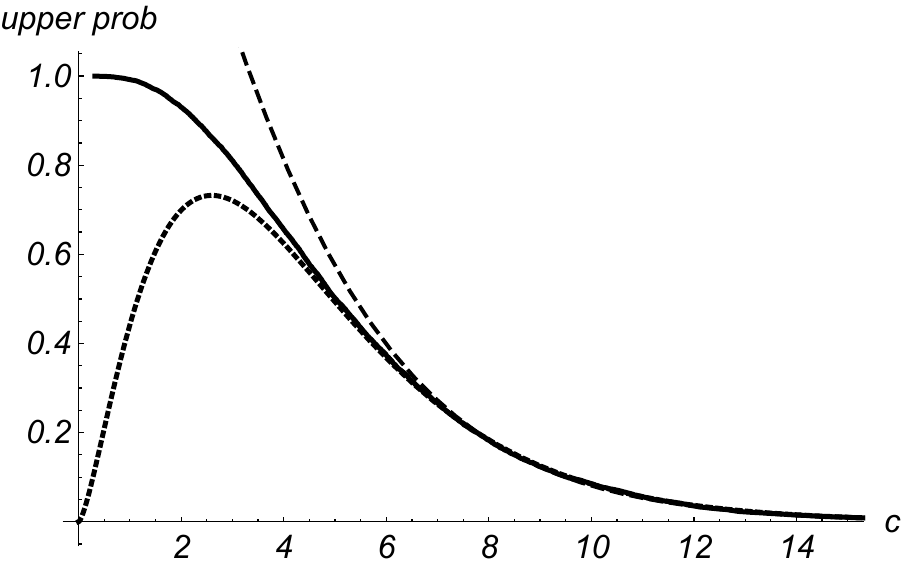}}
&
\scalebox{0.775}{\includegraphics{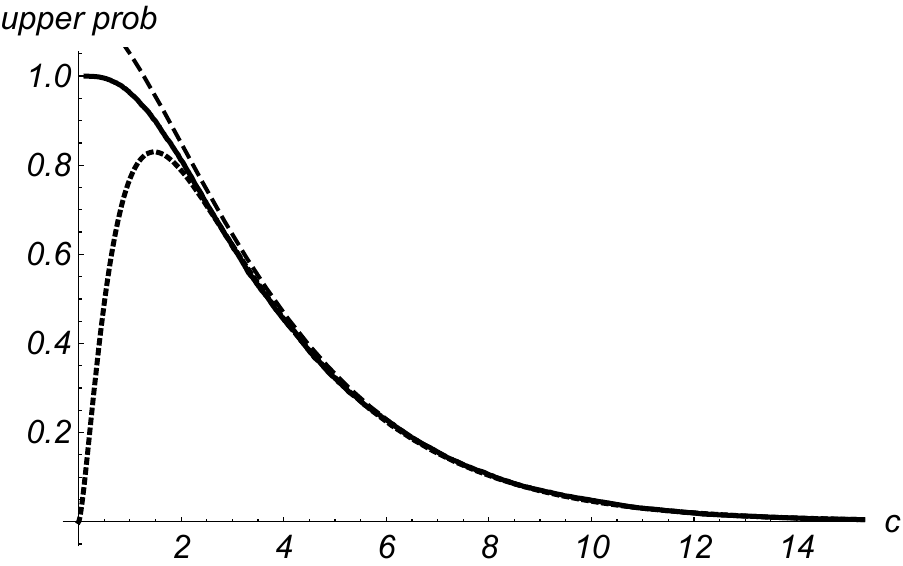}}
\\
{\small $\Lambda=\diag(1,3/4)$} & {\small $\Lambda=\diag(1,1/4)$}
\end{tabular}
\caption{Upper-tail probability of the largest eigenvalue of $\mathcal{W}_2(4,\Lambda)$.\newline
\small
\textbf{---}: Simulation, $\bm{\cdots}$: Tube method, $\bm{- -}$: Laplace approximation.
}
\label{fig:wishart}
\end{center}
\end{figure}

\section{Proofs}
\label{sec:proofs}

In this section we prove theorems and propositions of Section \ref{sec:main}.
We represent elements of $\R^n$ as column vectors.
The inner product of $u, w\in\R^n$ is denoted by $\langle u,w\rangle$ or $u^\top w$ interchangeably.

\subsection{Notation}

Recall that the point of $M$ is written as $u=\varphi(t)$, $t=(t^i)_{1\le i\le d}$.
Write $\partial/\partial t^i=\partial_i$, and
$\varphi_i=\partial_i\varphi$,
$\varphi_{ij}=\partial_i\partial_j\varphi$, $\sigma_i=\partial_i\sigma$, \textit{etc}.
The metric tensor and the connection coefficient are
$g_{ij} = \langle \varphi_i,\varphi_j\rangle$ and
\[
 \Gamma_{ij,k}
 = \langle\varphi_{ij},\varphi_k\rangle
 = \frac{1}{2}(\partial_i g_{jk} + \partial_j g_{ik} - \partial_k g_{ij}),
\]
respectively.
Note that $\partial_k g_{ij}=\Gamma_{ik,j}+\Gamma_{jk,i}$.

The tangent space of $\S^{n-1}=S(\R^n)$ and $M$ at $u=\varphi$ are written as
\[
 T_u(\S^{n-1}) = \{w \in\R^n \mid u^\top w=0 \}, \qquad
 T_u(M) = \spn\{\varphi_i \mid 1\le i\le d\},
\]
respectively.

An orthonormal basis of the normal space $N_u$ in (\ref{Nu}) is denoted by
$n_a=n_a(u)$, $1\le a\le n-d-1$.
That is,
\[
 N_u = T_u(M)^\perp\cap T_u(\S^{n-1}) = \spn\{n_a \mid 1\le a\le n-d-1\}.
\]
Note that
\begin{equation}
\label{osum}
 \R^n = T_u(\R^n) = \spn\{u\}\oplus T_u(M) \oplus N_u.
\end{equation}
Throughout this section, we use Einstein's convention that $a_i b^i$ means $\sum_i a_i b^i$ unless it causes confusion.

\subsection{Jacobian}

Let $z\in\R^n$ be fixed.
The maximal point $u\in M$ of the function $u \mapsto \sigma(u)\langle u,z\rangle$ satisfies the ``normal equation'':
\begin{align}
\label{normal}
0
= \partial_i (\sigma \langle \varphi,z\rangle)
= \langle \sigma_i \varphi + \sigma \varphi_i,z\rangle, \quad 1\le i\le d.
\end{align}
Because of (\ref{osum}), any $z\in\R^n$ is represented by $z = r \varphi + r^i \varphi_i + s^a n_a$.
Then, (\ref{normal}) becomes $0=\sigma_i r + \sigma r^j g_{ij}$, and hence
$r^j = - \sigma^{-1} \sigma_i r g^{ij} = - \ell_i r g^{ij}$,
where $\ell_i=\partial_i\ell=\partial_i\log\sigma$ and $g^{ij}$ is the $(i,j)$th element of $(g_{ij})^{-1}$.
Therefore, at point $u$ such that $\max_u X(u) = X(u)$,
\begin{equation}
\label{one-to-one}
 z = r \varphi - \ell_i r g^{ij} \varphi_j +  s^a n_a = r \psi +  s^a n_a,
\end{equation}
where
\[
 \psi = \varphi - \ell_i g^{ij} \varphi_j = u - \nabla\ell, \quad \nabla\ell=\ell_i g^{ij} \varphi_j.
\]
$\nabla\ell$ is the gradient of $\ell$ in the tangent space $T_u(M)$.
This defines a local one-to-one transformation between
$z\leftrightarrow (r,t^i,s^a)$.
We derive the Jacobian of this transformation as follows.

Taking the derivatives of (\ref{one-to-one}), we obtain
\[
 \dd z = \dd r \psi + r \dd\psi + \dd s^a n_a + s^a \dd n_a,
\]
where
\begin{align*}
\dd\psi
=& \dd\varphi - \dd[\ell_j g^{ji}\varphi_i]
= \varphi_k \dd t^k - \partial_k [\ell_j g^{ji}\varphi_i] \dd t^k \\
=& [\varphi_k - \ell_{kj} g^{ji}\varphi_i
 -\ell_j (\partial_k g^{ji})\varphi_i
 -\ell_j g^{ji} \varphi_{ki}] \dd t^k, \\
\dd n_a =& (\partial_k n_a) \dd t^k.
\end{align*}
We will evaluate
$\varphi^\top \dd z$,
$\varphi_l^\top \dd z$,
$n_b^\top \dd z$, in turn.

(i) To obtain $\varphi^\top \dd z$, we have
\begin{align*}
\varphi^\top\psi =& 1, \\
\varphi^\top \dd\psi =&
  -\ell_j g^{ji}\varphi^\top \varphi_{ki} \dd t^k
 = \ell_j g^{ji}g_{ki} \dd t^k
 = \ell_k \dd t^k, \\
\varphi^\top \dd n_a =& (\varphi^\top\partial_k n_a) \dd t^k = -(\varphi_k^\top n_a) \dd t^k = 0.
\end{align*}
Summarizing the above,
\begin{equation}
 \varphi^\top \dd z = \dd r + r\ell_k \dd t^k.
\label{varphi-dz}
\end{equation}

(ii) To obtain $\varphi_l^\top \dd z$, we have
\begin{align*}
\varphi_l^\top\psi
=& - \ell_j g_{li} g^{ji} = -\ell_l, \\
\varphi_l^\top \dd\psi
=& [g_{lk} - \ell_{kj} g^{ji}g_{il}
  - \ell_j (\partial_k g^{ji})g_{li}
  - \ell_j g^{ji}\varphi_l^\top \varphi_{ik}] \dd t^k \\
=& [g_{lk} - \ell_{kl}
  + \ell_j g^{ji}(\partial_k g_{li})
  - \ell_j g^{ji}\varphi_l^\top \varphi_{ik}] \dd t^k \\
=& [g_{lk} - \ell_{kl}
  + \ell_j g^{ji}(\Gamma_{lk,i}+\Gamma_{ik,l})
  - \ell_j g^{ji}\Gamma_{ik,l}] \dd t^k \\
=& [g_{lk} - (\ell_{kl} - \ell_j g^{ji}\Gamma_{lk,i})] \dd t^k
= [g_{lk} - \nabla_k\nabla_l\ell] \dd t^k,
\end{align*}
where
\[
 \nabla_l\ell = \partial_l\ell, \quad
 \nabla_k\nabla_l\ell = \partial_k\partial_l\ell -\Gamma_{kl,i}g^{ij}\ell_j
\]
are the covariant differentials.
Moreover,
\begin{align*}
\varphi_l^\top \dd n_a =& \varphi_l^\top\partial_k n_a \dd t^k
= -\varphi_{lk}^\top n_a \dd t^k = h_{kl,a} \dd t^k,
\end{align*}
where we let $h_{kl,a}=-\varphi_{lk}^\top n_a$.
Summarizing the above, 
\begin{equation}
\varphi_l^\top \dd z = - \ell_l \dd r
 + r[g_{lk}-\nabla_l\nabla_k\ell] \dd t^k
 + s^a h_{kl,a} \dd t^k.
\label{varphi_l-dz}
\end{equation}

(iii) To obtain $n_b^\top dz$, we have
$n_b^\top\psi = 0$, and
$n_b^\top \dd\psi$ and $n_b^\top \dd n_a$ consist of terms containing $\dd t^k$.
Therefore,
\begin{equation}
 n_b^\top \dd z = (\mbox{terms containing }\dd t^k) + \delta_{ab} \dd s^a,
\label{n_b-dz}
\end{equation}
where $\delta_{ab}$ is the Kronecker delta.

Combining (\ref{varphi-dz}), (\ref{varphi_l-dz}), and (\ref{n_b-dz}), we obtain
\begin{equation}
 \begin{pmatrix}
  \varphi^\top \\ \varphi_l^\top \\ n_b^\top
 \end{pmatrix} \dd z =
 \begin{pmatrix}
  1 & r\nabla_k\ell & 0 \\
   -\nabla_l\ell & r[g_{lk}-\nabla_k\nabla_l\ell] +s^a h_{kl,a} & 0 \\
  0 & * & \delta_{ab}
 \end{pmatrix}
 \begin{pmatrix} \dd r \\ \dd t^k \\ \dd s^a \end{pmatrix},
\label{det}
\end{equation}
where entries marked by $*$ do not affect the following calculations.
The determinant of the matrix on the right-hand side of (\ref{det}) is
\[
 \det(r[g_{lk}-\nabla_k\nabla_l\ell
  +\nabla_k\ell\nabla_l\ell]
  +s^a h_{kl,a})
 = \det(r(G+C) + H(s)),
\]
where
$c_{kl} = - \nabla_k \nabla_l\ell + \nabla_k\ell \nabla_l\ell$,
$C =(c_{kl})$, $G =(g_{kl})$, and $H(s) =(s^a h_{kl,a})$ as before.
$H(s)$ is the second fundamental form in the normal direction $s^a n_a\in N_u$ in (\ref{Nu}).
When needed, we write $C=C_u$, $G=G_u$, and $H(s)=H_u(s)$ to indicate the point $u$ where these quantities are defined.

On the other hand,
\[
 \det(\varphi,\varphi_l,n_b) = \det(G)^{\frac{1}{2}}.
\]
Taking the wedge product of all rows in (\ref{det}),
\[
 \det(G)^{\frac{1}{2}} \bigwedge_{i=1}^n \dd z_i = r^d \dd r\,\det(G+C + r^{-1} H(s))\,\bigwedge_{i=1}^d \dd t^i\,\bigwedge_{a=1}^{n-d-1}\dd s^a.
\]

Moreover, let
\[
\textstyle
 s = \sqrt{\sum_{a=1}^{n-d-1}(s^a)^2}, \qquad
 v = \Bigl(\sum_{a=1}^{n-d-1} s^a n_a\Bigr)\big/s \in S(N_u),
\]
where $S(N_u)$ is the unit sphere of the linear space $N_u$.
We conclude that the local one-to-one correspondence
\[
 z = r \psi + s v, \quad \psi = u - \nabla \ell
\]
has the Jacobian
\[
 \dd z = \bigl|\det(I_d+ C G^{-1}) \det(I_d+ (s/r) H(v)(G+C)^{-1})\bigr|\, r^d \dd r \, \dd u \,s^{n-d-2} \dd s \, \dd v,
\]
where $\dd z$ is the $n$-dimensional Lebesgue measure at $z$,
$\dd u = \det(G)^{\frac{1}{2}} \prod_{i=1}^d \dd t^i$ is the volume element of $M$ at $u$, and $\dd v$ is the volume element of $S(N_u)$.
Note that
\begin{equation}
\label{note}
 \Vert z\Vert^2 = r^2(1+\Vert\nabla\ell\Vert^2) + s^2, \qquad
 \sigma(u)\langle u,z/\Vert z\Vert\rangle
 = \frac{\sigma(u)}{\sqrt{1+\Vert\nabla\ell\Vert^2+(s/r)^2}}.
\end{equation}

\subsection{$\bc$ and critical radius $\tc$}
\label{subsec:proof-critical}

In this subsection, we prove Theorem \ref{thm:critical} and Propositions \ref{prop:validity} and \ref{prop:supporting}.

In the previous subsection, we proved that
\begin{eqnarray*}
&& \mbox{$u\in M$ annihilates the gradient of map $u\mapsto\sigma(u)\langle u,z\rangle$} \\
&\iff& z\in\{r\psi(u) + s v \mid (r,s,v)\in Q_u\},
\end{eqnarray*}
where
\[
 Q_u = \{(r,s,v) \mid r>0,\ s\ge 0,\ v\in S(N_u)\}.
\]
Here, we will find additional conditions on $z$ (the set $V_u$) such that
\begin{eqnarray*}
&& \mbox{$u\in M$ is the unique maximal point of the map $u\mapsto\sigma(u)\langle u,z\rangle$} \\
&\iff& z\in\{r\psi(u) + s v \mid (r,s,v)\in Q_u \}\cap V_u.
\end{eqnarray*}
Once the set $V_u$ is identified, and the event that multiple $u$'s have the same maximal values has a probability of zero, then the upper probability of $\Ymax$ is written as
\begin{equation}
\label{PU2}
 \P\bigl(\Ymax > b\bigr)
= \int \1_{\bigl\{\sigma(u)\langle u,z/\Vert z\Vert\rangle > b,\,z\in V_u\bigr\}} |p(u,r,s,v)| \,\dd u \,\dd r \,\dd s \,\dd v,
\end{equation}
where
\begin{align}
\label{p}
p(u,r,s,v)
= \frac{1}{(2\pi)^{n/2}}
 e^{-\frac{1}{2}\{r^2(1+\Vert\nabla\ell(u)\Vert^2) + s^2\}}
 \det(I_d+ C_u G_u^{-1}) \\
 \times r^d s^{n-d-2} \det(I_d+ (s/r) H_u(v)(G_u+C_u)^{-1}).
\nonumber
\end{align}

\begin{proof}[Proof of Theorem \ref{thm:critical}]
From the definition, we write
\[
 V_u = \bigl\{ z \in \R^n \mid \sigma(w)\langle w,z\rangle
 < \sigma(u)\langle u,z\rangle,\ z = r \psi(u) + s v,
 \ \forall w\in M\setminus\{u\} \bigr\}.
\]
Then, $V_u\cap V_{u'}=\emptyset$, $u\ne u'$, and 
the complement $\R^n\setminus\bigcup_{u\in M} V_u$ has the Lebesgue measure zero
by Sard's theorem.

The restriction $z\in V_u$ is to prevent the overlap of the integration (\ref{PU2}).
If $b$ is sufficiently large, we do not need to impose this restriction.
For fixed $u$, we find a threshold of $b$, say $\bc'(u)$, as the infimum of $b$ such that
\begin{align*}
& \{(r,s,v)\in Q_u \mid \sigma(u)\langle u,z/\Vert z\Vert\rangle > b,\,z=r\psi(u)+sv\in V_u\} \\
& = \{(r,s,v)\in Q_u \mid \sigma(u)\langle u,z/\Vert z\Vert\rangle > b,\,z=r\psi(u)+sv\}\ \ \mbox{a.s.},
\end{align*}
or equivalently
\begin{equation}
\label{rsv}
 A_u(b) \subset B_u \ \ \mbox{a.s.},
\end{equation}
where
\begin{align*}
A_u(b) =&\, \{(r,s,v)\in Q_u \mid \sigma(u)\langle u,z/\Vert z\Vert\rangle > b,\,z=r\psi(u)+sv\}, \nonumber \\ 
B_u    =&\, \{(r,s,v)\in Q_u \mid z=r\psi(u)+sv\in V_u\}.
\end{align*}
The threshold $\bc$ in Theorem \ref{thm:critical} is obtained by $\bc=\sup_{u\in M}\bc'(u)$.

Because of (\ref{note}), the set $A_u(b)$ is
\begin{align}
\label{iii}
 A_u(b)
=&\,\biggl\{ (r,s,v)\in Q_u \mid 
 \frac{\sigma(u)}{\sqrt{1+\Vert\nabla\ell(u)\Vert^2+(s/r)^2}} > b \biggr\} \\
=&\,\bigl\{ (r,s,v)\in Q_u \mid (s/r)^2 < \sigma(u)^2/b^2-(1+\Vert\nabla\ell(u)\Vert^2) \bigr\}.
\nonumber
\end{align}

Next, we characterize the set $B_u$.
Note first that
\begin{eqnarray}
\label{Vu}
&&
r\psi(u)+sv\in V_u \\
&\iff& \sigma(w)\langle w,r\psi(u)+sv\rangle <
 \sigma(u)\langle u,r\psi(u)+sv\rangle,
 \ \ \forall w\in M\setminus\{u\} \nonumber \\
&\iff&
 \frac{s}{r} \langle w,v\rangle <
 \sigma(u)/\sigma(w)
   - \langle w,\psi(u)\rangle,\ \ \forall w\in M\setminus\{u\}.
  \nonumber
\end{eqnarray}
$M\setminus\{u\}$ is partitioned as $M_u^+\sqcup M_u^-$, where
\begin{align*}
 M_u^+ =& \{ w\in M \mid \sigma(u)/\sigma(w)
   - \langle w,\psi(u)\rangle > 0 \}, \\
 M_u^- =& \{ w\in M \mid \sigma(u)/\sigma(w)
   - \langle w,\psi(u)\rangle \le 0 \} \setminus\{u\}.
\end{align*}
We consider two cases $M_u^-=\emptyset$ and $M_u^-\ne\emptyset$, separately.

(i) When $M_u^-=\emptyset$, 
\begin{align}
\label{Bu1}
B_u
=&\,
\biggl\{ (r,s,v)\in Q_u \mid \mbox{``if }\langle w,v\rangle > 0, \mbox{ then }\ %
 \frac{s}{r} < \frac{\sigma(u)/\sigma(w) - \langle w,\psi(u)\rangle}
            {\langle w,v\rangle}\mbox{''},\ \ \forall w\in M\setminus\{u\} \biggr\} \\
=&\,
\biggl\{ (r,s,v)\in Q_u \mid 
 \frac{s}{r} <
 \inf_{w\in M\setminus\{u\}:\langle w,v\rangle > 0}\frac{\sigma(u)/\sigma(w) - \langle w,\psi(u)\rangle}{\langle w,v\rangle} \biggr\}.
\nonumber
\end{align}
Therefore, $A_u(b)\subset B_u$ iff
\begin{align*}
 \sqrt{\frac{\sigma(u)^2}{b^2}-(1+\Vert\nabla\ell(u)\Vert^2)}
<&
 \inf_{v\in S(N_u)}\inf_{w\in M\setminus\{u\}:\langle w,v\rangle > 0}\frac{\sigma(u)/\sigma(w) - \langle w,\psi(u)\rangle}{\langle w,v\rangle} \\
=&
 \inf_{w\in M\setminus\{u\}}\inf_{v\in S(N_u):\langle w,v\rangle > 0}\frac{\sigma(u)/\sigma(w) - \langle w,\psi(u)\rangle}{\langle w,v\rangle} \\
=&
 \inf_{w\in M\setminus\{u\}}\frac{\sigma(u)/\sigma(w) - \langle w,\psi(u)\rangle}{\Vert P^\perp_u w\Vert},
\end{align*}
where $P^\perp_u$ is defined in the statement of Theorem \ref{thm:critical}.
Here we used
\[
 \sup_{v\in S(N_u):\langle w,v\rangle>0}\langle w,v\rangle = \Vert P^\perp_u w\Vert.
\]

(ii) When $M_u^-\ne\emptyset$,
\begin{align}
\label{Bu2}
B_u=&\,
\biggl\{(r,s,v)\in Q_u \mid
 \mbox{``}\langle w,v\rangle < 0\ \mbox{and}\ %
 \frac{s}{r} > \frac{- \sigma(u)/\sigma(w) + \langle w,\psi(u)\rangle}
            {-\langle w,v\rangle}\mbox{''},\ \ \forall w\in M\setminus\{u\}
\biggr\} \\
=&\,
\biggl\{(r,s,v)\in Q_u \mid
 \sup_{w\in M\setminus\{u\}}\langle w,v\rangle < 0,\ %
 \frac{s}{r} > \sup_{w\in M\setminus\{u\}}\frac{- \sigma(u)/\sigma(w) + \langle w,\psi(u)\rangle}
            {-\langle w,v\rangle}
\biggr\}. \nonumber
\end{align}
If $A_u(b)$ is not empty, then there exist $(r,s,v),(r,s,-v)\in A_u(b)$, but this contradicts the definition of $B_u$.
Therefore, $A_u(b)\subset B_u$ holds iff $A_u(b)=\emptyset$.
That is,
\[
 \frac{\sigma(u)^2}{b^2}-(1+\Vert\nabla\ell(u)\Vert^2) \le 0.
\]

Combining (i) and (ii), $\bc'(u)$ is given by
\[
 \sqrt{\frac{\sigma(u)^2}{\bc'(u)^2}-(1+\Vert\nabla\ell(u)\Vert^2)} = \inf_{w\in M\setminus\{u\}}\frac{\{\sigma(u)/\sigma(w) - \langle w,\psi(u)\rangle\}_+}{\Vert P^\perp_u w\Vert},
\]
or equivalently,
\[
 \bc'(u)^2 = \frac{\sigma(u)^2}{1+\inf_{w\in M\setminus\{u\}}h(u,w)}, 
\]
where $h(u,w)$ is defined in (\ref{h}).
The threshold $\bc=\sup_{u\in M}\bc'(u)$ is derived as (\ref{critical1}).
\end{proof}

\begin{proof}[Proof of Proposition \ref{prop:supporting}]
(i) is just a restatement.

(ii).
Let $\hu(u)=\inf_{w:w\ne u}h(u,w)$. Suppose that $u\notin\Msupp$.
Then, $h(u,w)=\Vert\nabla\ell(u)\Vert^2$ for some $\exists w\,(\ne u$),
 $\hu(u)=\Vert\nabla\ell(u)\Vert^2$,
\[
 \frac{1+\Vert\nabla\ell(u)\Vert^2}{\sigma(u)^2}\bc^2
 = \frac{1+\hu(u)}{\sigma(u)^2}\bc^2
 = \frac{1+\hu(u)}{\sigma(u)^2}\sup_{u'} \frac{\sigma(u')^2}{1+\hu(u')}\ge 1,
\]
which means that $u\notin\Mc$.
Therefore, $\Msupp\supset\Mc$.

(iii).
We follow the proof of Theorem \ref{thm:critical}.
Recall that $u\notin\Msupp \iff M_u^-\ne\emptyset$.
When $M_u^-\ne\emptyset$,
\[
\{(r,s,v)\in Q_u \mid \sigma(u)\langle u,z/\Vert z\Vert\rangle > b,\,z=r\psi(u)+sv\in V_u\} = A_u(b)\cap B_u = \emptyset
\]
for $b\ge\bc'(u)$, meaning that there is no point $z/\Vert z\Vert$ such that the maximum of $\sigma(u)\langle z/\Vert z\Vert,u\rangle$ is attained at $u$.
In other words, the radius of tube $(M)_{b/\sigma(\cdot)}$ vanishes at $u$ for any $b\ge\bc'(u)$.
\end{proof}

\begin{lemma}
\label{lem:nnd}
For $z\in V_u$, the matrix $G_u+C_u+(s/r)H_u(v)$ is non-negative definite.
\end{lemma}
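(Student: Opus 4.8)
The plan is to read off the claimed definiteness from the second-order optimality condition at the maximizing point. Fix $z\in V_u$. By the definition of $V_u$, the point $u$ is the unique global, hence local, maximizer of the map $w\mapsto\sigma(w)\langle w,z\rangle$ on $M$; in local coordinates $t$ around $u$, write this map as $f(t)=\sigma(\varphi(t))\,\varphi(t)^\top z$, which is $C^2$ by the smoothness hypotheses on $M$ and $\sigma$. Its first-order condition $\partial_i f=0$ at $u$ is precisely the normal equation (\ref{normal}) and, using $\sigma_i=\sigma\ell_i$, amounts to $\varphi_i^\top z=-\ell_i\,(\varphi^\top z)$. Its second-order condition is that the Hessian matrix $(\partial_i\partial_j f)$ at $u$ be non-positive definite; since $\partial_k f=0$ there, this Hessian coincides with the covariant Hessian $(\nabla_i\nabla_j f)$, and so defines an intrinsic bilinear form on $T_u(M)$.

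The next step is to evaluate $\nabla_i\nabla_j f$ at $u$. Differentiating $f$ twice and substituting the first-order relation $\varphi_i^\top z=-\ell_i(\varphi^\top z)$ yields
\[
 \partial_i\partial_j f\big|_u=(\varphi^\top z)\,\sigma\,(\ell_{ij}-\ell_i\ell_j)+\sigma\,\varphi_{ij}^\top z .
\]
Now insert $z=r\psi(u)+sv$ with $\psi(u)=u-\nabla\ell$, $r>0$, $s\ge 0$, $v\in S(N_u)$. Using that $v$ is orthogonal to $\varphi$ and to each $\varphi_k$, together with $\varphi^\top\varphi=1$, $\varphi^\top\varphi_k=0$, $\varphi^\top\varphi_{ij}=-g_{ij}$ (from differentiating $\langle\varphi,\varphi_i\rangle=0$), $\varphi_k^\top\varphi_{ij}=\Gamma_{ij,k}$, and $n_a^\top\varphi_{ij}=-h_{ij,a}$, one obtains $\varphi^\top z=r$ and $\varphi_{ij}^\top z=-r\,g_{ij}-r\sum_{k,l}g^{kl}\Gamma_{ij,k}\ell_l-s\sum_a h_{ij,a}v^a$. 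Comparing with the definition (\ref{C}) of $c_{ij}$ and with $H_u(v)=(h_{ij,a}v^a)_{ij}$, the terms recombine into
\[
 \nabla_i\nabla_j f\big|_u=-\,r\,\sigma(u)\bigl(g_{ij}+c_{ij}+(s/r)\,H_u(v)_{ij}\bigr)=-\,r\,\sigma(u)\bigl(G_u+C_u+(s/r)H_u(v)\bigr)_{ij}.
\]

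Since $r>0$ and $\sigma(u)>0$, the non-positive definiteness of $(\nabla_i\nabla_j f)$ at the maximizer $u$ is equivalent to the non-negative definiteness of $G_u+C_u+(s/r)H_u(v)$, which is the assertion. I do not expect a genuine obstacle here: the only labour is the bookkeeping in the second display, and that computation is essentially the one already carried out for the Jacobian earlier in this section (compare the matrix $r(G+C)+H(s)$ appearing in (\ref{det})); the content of the lemma is merely to reinterpret that algebra as the second variation of $w\mapsto\sigma(w)\langle w,z\rangle$ and to apply the second-order necessary condition at a maximizer of a $C^2$ function on a manifold.
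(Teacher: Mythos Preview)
Your argument is correct and is essentially the same idea as the paper's: both extract the second-order necessary condition at the maximizer $u$ of $w\mapsto\sigma(w)\langle w,z\rangle$. The paper organizes the calculation slightly differently, writing the optimality inequality as $\sigma(u)/\sigma(w)-\langle w,\psi(u)\rangle-(s/r)\langle w,v\rangle>0$, dividing by $\Vert w-u\Vert^2$, and Taylor expanding each piece as $w\to u$; you instead differentiate $f$ twice at $u$ directly. The payoff of the paper's bookkeeping is that its second-order expansion of $\sigma(u)/\sigma(w)-\langle w,\psi(u)\rangle$ is reused verbatim in the proof of Proposition~\ref{prop:validity}, whereas your direct Hessian computation is the cleaner route if one only wants this lemma.
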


\begin{proof}
From (\ref{Vu}),
\begin{equation}
\label{liminf}
 \liminf_{w\to u} \frac{ \sigma(u)/\sigma(w)
   - \langle w,\psi(u)\rangle
 - (s/r) \langle w,v\rangle }{\Vert w-u\Vert^2} \ge 0.
\end{equation}
To evaluate the left-hand side, we introduce a local coordinate system
$t=(t^i)$ so that $u=u(0)$, $w=u(t)$.
Because
\begin{align*}
 w =&\; u + \partial_i u t^i 
 + \frac{1}{2}\partial_i \partial_j u t^i t^j + o(\Vert t\Vert^2), \\
 \ell(w) =&\; \ell(u) + \partial_i \ell(u) t^i 
 + \frac{1}{2}\partial_i \partial_j \ell(u) t^i t^j + o(\Vert t\Vert^2),
\end{align*}
we obtain
\[
 \Vert w-u\Vert^2 = 2(1-\langle w,u\rangle) = g_{ij}(u)t^i t^j
 + o(\Vert t\Vert^2), \quad
 \langle w,v\rangle = - \frac{1}{2} h_{ij}(u;v) t^i t^j
 + o(\Vert t\Vert^2),
\]
where $g_{ij}(u)$ and $h_{ij}(u;v)$ are the $(i,j)$th elements of $G_u$ and $H_u(v)$, respectively, and
\begin{align*}
\frac{\sigma(u)}{\sigma(w)} - \langle w,\psi(u)\rangle
=& e^{-\ell(w)+\ell(u)} - \langle w,u-\nabla\ell(u)\rangle \\
=& 1 - \partial_i\ell(u) t^i
 - \frac{1}{2} \partial_i \partial_j \ell(u) t^i t^j
 + \frac{1}{2} \partial_i \ell(u) \partial_j \ell(u) t^i t^j \\
& -\langle u + \partial_i u t^i + \frac{1}{2}\partial_i \partial_j u t^i t^j,
 u - g^{kl}(u) \partial_k\ell(u) \partial_l u \rangle + o(\Vert t\Vert^2) \\
=& 1 - \partial_i\ell(u) t^i
 - \frac{1}{2} \partial_i \partial_j \ell(u) t^i t^j
 + \frac{1}{2} \partial_i \ell(u) \partial_j \ell(u) t^i t^j \\
& - 1 + \partial_i\ell(u) t^i
 - \frac{1}{2} \langle \partial_i \partial_j u,u\rangle t^i t^j
 + \frac{1}{2}\langle \partial_i \partial_j u,\partial_l u \rangle
 g^{kl}(u) \partial_k\ell(u) t^i t^j \\
& + o(\Vert t\Vert^2) \\
=& \frac{1}{2} \bigl( g_{ij}(u) - \nabla_i\nabla_j\ell(u)
 + \nabla_i\ell(u)\nabla_j\ell(u) \bigr) t^i t^j + o(\Vert t\Vert^2).
\end{align*}
(\ref{liminf}), by letting $\Vert t\Vert\to 0$, means that
the matrix $(G_u + C_u + (s/r) H_u(v)) G_u^{-1}$
with $C_u=(c_{ij}(u))$,
$c_{ij}(u) = - \nabla_i\nabla_j\ell(u) + \nabla_i\ell(u)\nabla_j\ell(u)$,
is non-negative definite.
\end{proof}

\begin{proof}[Proof of Proposition \ref{prop:validity}]
We prove the existence of a constant $c<1$ such that
\begin{equation}
\label{c}
 \frac{\sigma(u)^2}{1+h(u,w)} \le c\,\sigma_0^2 \quad \forall (u,w)\in M\times M,\,u\ne w.
\end{equation}
We divide $M\times M\setminus\{(u,u)\mid u\in M\}$ into three parts.

(i)
Recall that $M_0=\{u\in M \mid \sigma(u)=\sigma_0\}$ is the maximizer of $\sigma(u)$.
Suppose that $u\in M$ is in the neighborhood of $M_0$, and $w$ is sufficiently close to $u$.
By introducing the same local coordinates $u=u(0)$, $w=u(t)$ as in the proof of Lemma \ref{lem:nnd}, we see
\begin{equation}
\label{positive}
\frac{\sigma(u)}{\sigma(w)} -1 + \langle w,\nabla\ell(u)\rangle
= \frac{1}{2} \bigl( g_{ij}(u) - \nabla_i\nabla_j\ell(u)
 + \nabla_i\ell(u)\nabla_j\ell(u) \bigr) t^i t^j + o(\Vert t\Vert^2),
\end{equation}
which is positive because at $u\in M_0$, the gradient is $\nabla_i\ell(u)=0$, and the Hesse matrix $(\nabla_i\nabla_j\ell(u))$ is non-positive definite.
Let $M_1\supset M_0$ be an open subset of $M$ and $\varepsilon >0$ such that (\ref{positive}) is positive for all $u\in M_1$ and $\langle w,u\rangle>1-\varepsilon$.

Because
\[
 \frac{\sigma(u)}{\sigma(w)} - \langle w,\psi(u)\rangle
- (1 -  \langle w,u\rangle)
= \frac{\sigma(u)}{\sigma(w)} - 1 + \langle w,\nabla\ell(u)\rangle > 0,
\]
we have
\[
 h(u,w) \ge
 \frac{\{\sigma(u)/\sigma(w)
 - \langle w,\psi(u)\rangle\}_+}{\Vert P_u^\perp w\Vert}
 > \frac{1-\langle w,u\rangle}{\Vert P_{u}^\perp w\Vert}.
\]
Moreover,
\[
 \frac{1-\langle w,u\rangle}{\Vert P_{u}^\perp w\Vert}
\ge
 \inf_{u,w\in M,u\ne w} \frac{1-\langle w,u\rangle}{\Vert P_{u}^\perp w\Vert}
 = \tan^2\tc,
\]
where $\tc$ is the global critical radius of $M$ when $\sigma(u)\equiv 1$,
and is positive (Remark \ref{rem:critical}).

In summary, on the region
\begin{equation}
\label{set1}
 \{ (u,w)\in M_1\times M \mid \langle w,u\rangle>1-\varepsilon,\,u\ne w\},
\end{equation}
\[
 \frac{\sigma(u)^2}{1+h(u,w)}\le \sigma_0^2\cos^2\tc.
\]

(ii)
Let $\varepsilon>0$ be the number chosen in (i).
We can choose an open subset $M_1'$ of $M$, $M_0\subset M_1'\subset M_1$ such that $\sigma(u)> \sigma_0(1-\varepsilon^2)$ and $\Vert\nabla\ell(u)\Vert<\varepsilon^2$ for $u\in M_1'$.
Then, for $u\in M_1'$ and $w$ such that $\langle w,u\rangle\le 1-\varepsilon$,
\[
 h(u,w)\ge
 \frac{\sigma(u)}{\sigma(w)} - \langle w,u-\nabla\ell(u)\rangle
 \ge \frac{\sigma(u)}{\sigma(w)} - \langle w,u\rangle - \Vert\nabla\ell(u)\Vert \ge \varepsilon-2\varepsilon^2=\varepsilon',
\]
which means that on the region
\begin{equation}
\label{set2}
 \{(u,w)\in M_1'\times M\mid \langle w,u\rangle\le 1-\varepsilon\},
\end{equation}
\[
 \frac{\sigma(u)^2}{1+h(u,w)} \le \sigma_0^2/(1+\varepsilon').
\] 

(iii)
$M_1'$ contains $M_0$, and outside $M_1'$ the $\sigma(u)$ is strictly less that $\sigma_0=\max_{u\in M}\sigma(u)$.
Let $\sigma_1=\max_{u\in M\setminus M_1'}\sigma(u)<\sigma_0$.
On the region
\begin{equation}
\label{set3}
 \{(u,w)\in (M\setminus M_1')\times M \mid u\ne w\},
\end{equation}
$h(u,w)\ge 0$ and
\[
 \frac{\sigma(u)^2}{1+h(u,w)} \le \sigma_1^2.
\] 

The union of (\ref{set1}), (\ref{set2}) and (\ref{set3}) covers $M\times M\setminus\{(u,u)\mid u\in M\}$.
Therefore, we have (\ref{c}) with the constant
\[
 c=\max\bigr\{\cos^2\tc,1/(1+\varepsilon'),\sigma_1^2/\sigma_0^2\bigl\} < 1.
\]
\end{proof}

Lemma \ref{lem:nnd} affirms that $p(u,r,s,v)$ in (\ref{p}) is non-negative.
We have the refinement of (\ref{PU2}) as follows.

\begin{proposition}
Let $\dd u$ and $\dd v$ be the volume elements of $M$ and $S(N_u)$, respectively.
Let $\bc$ be the threshold defined in (\ref{critical}).
Then, for all $b$,
\[
\P\bigl(\Ymax > b\bigr)
= \int_M \dd u \int_0^\infty \dd r \int_0^\infty \dd s
\,\1_{\Bigl\{\frac{\sigma(u)r}{\sqrt{r^2(1+\Vert\nabla\ell\Vert^2)+s^2}} > b\Bigr\}\cap B_u}
 \int_{S(N_u)} \dd v \, p(u,r,s,v),
\]
where
$B_u=(\ref{Bu1})$ if $M_u^-=\emptyset$,
$(\ref{Bu2})$ if $M_u^-\ne\emptyset$.
For $b\ge\bc$,
\begin{equation}
\label{PU3}
\P\bigl(\Ymax > b\bigr)
= \int_M \dd u \int_0^\infty \dd r \int_0^\infty \dd s
\,\1_{\Bigl\{\frac{\sigma(u)r}{\sqrt{r^2(1+\Vert\nabla\ell\Vert^2)+s^2}} > b\Bigr\}}
 \int_{S(N_u)} \dd v \, p(u,r,s,v).
\end{equation}
\end{proposition}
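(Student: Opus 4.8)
The plan is to obtain both identities from the representation (\ref{PU2}) by two elementary refinements: first replacing $\vert p\vert$ by $p$, and then, for $b\ge\bc$, discarding the constraint $z\in V_u$. All of the analytic input---the change of variables $z=r\psi(u)+sv$ with the Jacobian computed above, Sard's theorem (used to justify (\ref{PU2})), Lemma \ref{lem:nnd}, and the characterization of $\bc$ from the proof of Theorem \ref{thm:critical}---is already in hand, so the remaining work is bookkeeping.

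First I would recast the integrand of (\ref{PU2}) in the coordinates $(u,r,s,v)$. By the definition $B_u=\{(r,s,v)\in Q_u\mid r\psi(u)+sv\in V_u\}$, the event $\{z\in V_u\}$ is exactly $\{(r,s,v)\in B_u\}$; and by the second identity in (\ref{note}), using $r>0$, the event $\{\sigma(u)\langle u,z/\Vert z\Vert\rangle>b\}$ is $\bigl\{\tfrac{\sigma(u)r}{\sqrt{r^2(1+\Vert\nabla\ell(u)\Vert^2)+s^2}}>b\bigr\}$. It then remains to verify that $p(u,r,s,v)\ge 0$ on $B_u$: in (\ref{p}) the factors $(2\pi)^{-n/2}$, the exponential, $r^d$, and $s^{n-d-2}$ are nonnegative on $Q_u$, while the remaining factor is
\[
 \det(I_d+C_uG_u^{-1})\,\det\bigl(I_d+(s/r)H_u(v)(G_u+C_u)^{-1}\bigr)
 =\det\bigl(G_u+C_u+(s/r)H_u(v)\bigr)\,\det(G_u)^{-1},
\]
which is nonnegative because $G_u+C_u+(s/r)H_u(v)$ is non-negative definite for $z\in V_u$ by Lemma \ref{lem:nnd} and $G_u$ is positive definite. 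Hence $\vert p\vert=p$ on $B_u$, and since $p\ge 0$, Tonelli's theorem allows (\ref{PU2}) to be rewritten as the iterated integral displayed in the proposition (with the $\dd v$-integration innermost). This gives the first identity.

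For the second identity, fix $b\ge\bc$. By (\ref{iii}) the subset of $Q_u$ cut out by $\sigma(u)\langle u,z/\Vert z\Vert\rangle>b$ is exactly $A_u(b)$, so the indicator in the first identity is $\1_{A_u(b)\cap B_u}$. In the proof of Theorem \ref{thm:critical} it was shown, case by case, that $A_u(b)\subset B_u$ (up to a set of $(r,s,v)$ of measure zero) whenever $b\ge\bc'(u)$: if $M_u^-=\emptyset$ this is the content of comparing (\ref{iii}) with (\ref{Bu1}), and if $M_u^-\ne\emptyset$ it holds because then $A_u(b)$ is empty. Since $\bc=\sup_{u\in M}\bc'(u)$, the hypothesis $b\ge\bc$ forces $b\ge\bc'(u)$ for every $u\in M$, so $\1_{A_u(b)\cap B_u}=\1_{A_u(b)}$ for a.e.\ $(r,s,v)$ and all $u$; substituting this into the first identity produces (\ref{PU3}).

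The step I expect to require the most care is the sign analysis of $p$ together with the handling of null sets---checking that the product of the two determinants in (\ref{p}) is controlled by Lemma \ref{lem:nnd} (immediate once combined into $\det(G_u+C_u+(s/r)H_u(v))\det(G_u)^{-1}$), and confirming that the exceptional events, where $u\mapsto\sigma(u)\langle u,z\rangle$ has several maximizers or the change of variables degenerates, are $\P$-null, which is precisely where Sard's theorem entered in deriving (\ref{PU2}). Neither is a genuine obstruction; the proposition is essentially a cleaned-up restatement of (\ref{PU2}).
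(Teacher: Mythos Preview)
Your proposal is correct and matches the paper's approach exactly: the paper presents this proposition as an immediate refinement of (\ref{PU2}), invoking Lemma \ref{lem:nnd} to remove the absolute value on $p$ and the case analysis from the proof of Theorem \ref{thm:critical} to drop the $B_u$ constraint when $b\ge\bc$. Your explicit verification that the product of determinants in (\ref{p}) equals $\det(G_u+C_u+(s/r)H_u(v))\det(G_u)^{-1}$, together with the Tonelli justification, fills in details the paper leaves implicit but adds nothing substantively new.
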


\subsection{Volume of tube formula: Intrinsic representation}

In this subsection, we prove Theorem \ref{thm:volume}.

We evaluate the integral (\ref{PU3}).
The following lemma is useful for managing the determinant factor in $p(u,r,s,v)$ of (\ref{p}).

For a finite set $I=\{i_1,\ldots,i_n\}$, the notations $\Sigma(I)$ and $\Pi(I)$ are defined in (\ref{SI}) and (\ref{PI}), respectively.
Recall that $\Sigma_d=\Sigma(\{1,\ldots,d\})$ and $\Pi_d=\Pi(\{1,\ldots,d\})$.

\begin{lemma}
\label{lem:wick}
Let $H=(h_{ij})$ be a $d\times d$ square Gaussian random matrix of mean zero.
Let
\[
 s_{ij;kl} = \E[h_{ik} h_{jl} - h_{il} h_{jk}] = \E\left[\det
 \begin{pmatrix} h_{ik} & h_{il} \\ h_{jk} & h_{jl}\end{pmatrix}\right].
\]
Then,
\begin{equation}
\label{EdetH}
 \E[\det(H)] =
 \begin{cases}
 \displaystyle
 \frac{1}{(d/2)!}\sum_{\pi,\tau\in \Pi_d} \sgn(\pi)\sgn(\tau)
 s_{\pi_1\pi_2;\tau_1\tau_2}s_{\pi_3\pi_4;\tau_3\tau_4}
 \cdots s_{\pi_{d-1}\pi_{d};\tau_{d-1}\tau_{d}} & (\mbox{if $d$ is even}), \\
 0 & (\mbox{if $d$ is odd}).
\end{cases}
\end{equation}
\end{lemma}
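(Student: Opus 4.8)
The plan is to expand $\det(H)$ by the Leibniz formula and take expectations, organizing the resulting sum of products of entries into pairings via Wick/Isserlis. Write $\det(H)=\sum_{\rho\in\Sigma_d}\sgn(\rho)\,h_{1\rho_1}h_{2\rho_2}\cdots h_{d\rho_d}$. Taking expectations and using the fact that a mean-zero Gaussian vector has vanishing odd moments gives $\E[\det(H)]=0$ when $d$ is odd, so assume $d$ even from now on. For even $d$, Wick's theorem expresses $\E[h_{1\rho_1}\cdots h_{d\rho_d}]$ as a sum over perfect matchings of the index set $\{1,\ldots,d\}$ of products of pair-covariances $\E[h_{i\rho_i}h_{j\rho_j}]$. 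Substituting this back and interchanging the two sums, I would reorganize the double sum so that the outer sum runs over perfect matchings $\{(p_1,q_1),\ldots,(p_{d/2},q_{d/2})\}$ of $\{1,\ldots,d\}$ (with each $p_a<q_a$) and the inner sum over $\rho\in\Sigma_d$.

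The key step is the antisymmetrization that converts raw covariances $\E[h_{i\rho_i}h_{j\rho_j}]$ into the $2\times 2$-determinant quantities $s_{ij;kl}=\E[h_{ik}h_{jl}-h_{il}h_{jk}]$. Fix a matching $\{(p_a,q_a)\}_{a=1}^{d/2}$ of the row indices. For $\rho\in\Sigma_d$, the column indices $\{\rho_{p_a},\rho_{q_a}\}$ form a matching of $\{1,\ldots,d\}$ as well; I would group the permutations $\rho$ according to this induced column matching together with, for each block $a$, the choice of which of the two columns is assigned to $p_a$ versus $q_a$. Summing over the two assignments within a single block $a$, with the appropriate relative sign coming from $\sgn(\rho)$ (swapping the two column values assigned to $p_a,q_a$ changes the sign of $\rho$), produces exactly the combination $\E[h_{p_a k}h_{q_a l}]-\E[h_{p_a l}h_{q_a k}]=s_{p_aq_a;kl}$ where $k<l$ are the two columns in that block. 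Carrying this out in every block simultaneously replaces the product of raw covariances by $\prod_a s_{p_aq_a;k_al_a}$, and the residual signs assemble into $\sgn(\pi)\sgn(\tau)$ where $\pi$ lists the row-matching $(p_1,q_1,\ldots)$ and $\tau$ lists the column-matching $(k_1,l_1,\ldots)$; this is precisely the statement that both $\pi$ and $\tau$ range over $\Pi_d$. Finally, since a perfect matching of a $d$-element set with $d/2$ blocks can be written as an ordered pairing in $(d/2)!$ ways (permuting the blocks), while the summand $s_{\pi_1\pi_2;\tau_1\tau_2}\cdots s_{\pi_{d-1}\pi_d;\tau_{d-1}\tau_d}$ is invariant under simultaneously permuting the blocks of $\pi$ and $\tau$, I divide by $(d/2)!$ to pass from unordered matchings to the sum over $\pi,\tau\in\Pi_d$, yielding (\ref{EdetH}).

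The main obstacle is purely bookkeeping: tracking the signs through the reorganization and confirming that the $\sgn(\rho)$ from the Leibniz formula, together with the antisymmetrization sign in each block, collapses cleanly to $\sgn(\pi)\sgn(\tau)$ with no leftover factor beyond the combinatorial $1/(d/2)!$. A clean way to make this rigorous is to first prove the identity when $H$ is built from a single Gaussian vector so that $s_{ij;kl}$ has rank-one-type structure, or alternatively to verify it for $d=2$ directly ($\E[\det H]=\E[h_{11}h_{22}-h_{12}h_{21}]=s_{12;12}$, matching the right side since $\Pi_2=\{(1,2)\}$ and $(d/2)!=1$) and then argue the general even-$d$ case by the matching-expansion above; both are multilinear identities in the entries' covariances, so it suffices to check them on a spanning set of covariance structures. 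I expect the $d=2$ base case plus the block-by-block antisymmetrization argument to suffice, with the sign computation being the only place demanding care.
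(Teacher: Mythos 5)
Your proposal is correct and follows essentially the same route as the paper's proof: Leibniz expansion, Wick's theorem for the Gaussian moments (giving zero for odd $d$), reorganization of the permutation sum by row/column pairings with within-block antisymmetrization producing the $s_{ij;kl}$, and the $1/(d/2)!$ accounting for the block orderings in $\Pi_d$. The only difference is cosmetic—you sum over unordered matchings and divide by $(d/2)!$ at the end, while the paper carries that factor inside the Wick formula from the start.
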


\begin{proof}
The case $d$ is odd is easy.
Suppose that $d$ is even.
\begin{align*}
 \E[\det(H)]
=& \sum_{\tau\in \Sigma_{d}} \sgn(\tau) \E[h_{1\tau_1}h_{2\tau_2}\cdots h_{d,\tau_{d}}] \\
=& \sum_{\tau\in \Sigma_{d}} \sgn(\tau) \frac{1}{(d/2)!}\sum_{\pi\in \Pi_{d}}\E\bigl[h_{\pi_1\tau_{\pi_1}}h_{\pi_2\tau_{\pi_2}}\bigr]\E\bigl[h_{\pi_3\tau_{\pi_3}}h_{\pi_4\tau_{\pi_4}}\bigr]\cdots\E\bigl[h_{\pi_{d-1},\tau_{\pi_{d-1}}}h_{\pi_{d},\tau_{\pi_{d}}}\bigr] \\
=& \frac{1}{(d/2)!} \sum_{\tau\in \Sigma_{d}} \sum_{\pi\in \Pi_{d}}\sgn(\tau) \sgn(\pi)
 \E[h_{\pi_1\tau_1}h_{\pi_2\tau_2}]
 \cdots \E[h_{\pi_{d-1},\tau_{d-1}}h_{\pi_{d},\tau_{d}}] \\
=& \frac{1}{(d/2)!} \sum_{\tau\in \Pi_{d}} \sum_{\pi\in \Pi_{d}}\sgn(\tau) \sgn(\pi)
\Biggl(\sum_{\sigma\in \Sigma(\{1,2\})}\sgn(\sigma) \E\bigl[h_{\pi_1\tau_{\sigma_1}}h_{\pi_2\tau_{\sigma_2}}\bigr]\Biggr)
 \times\cdots \\
& \qquad
 \times\Biggl(\sum_{\sigma\in \Sigma(\{d-1,d\})}\sgn(\sigma)
 \E\bigl[h_{\pi_{d-1},\tau_{\sigma_1}}h_{\pi_{d},\tau_{\sigma_2}}\bigr]\Biggr) \\
=& \frac{1}{(d/2)!} \sum_{\tau\in \Pi_{d}} \sum_{\pi\in \Pi_{d}}\sgn(\tau) \sgn(\pi)
 \bigl(\E[h_{\pi_1\tau_1}h_{\pi_2\tau_2}]-\E[h_{\pi_1\tau_2}h_{\pi_2\tau_1}]\bigr) \times\cdots \\
& \qquad
 \times\bigl(\E[h_{\pi_{d-1},\tau_{d-1}}h_{\pi_{d},\tau_{d}}]-\E[h_{\pi_{d-1},\tau_{d}}h_{\pi_{d},\tau_{d-1}}]\bigr),
\end{align*}
which is the right-hand side of (\ref{EdetH}).
The second equality is due to the moment formula for a product of Gaussian random variables.
\end{proof}

\begin{remark}
Lemma \ref{lem:wick} can be stated in terms of the commutative algebra of double forms.
See \cite{federer:1959},\cite{taylor-adler:2003},\cite{kuriki-takemura:2009}.
\end{remark}

\begin{proof}[Proof of Theorem \ref{thm:volume}]
We apply Lemma \ref{lem:wick} in the evaluation of the integral (\ref{PU3}) with respect to $\dd v$.
Recall that $\dd v$ is the volume element of the ($n-d-2$)-dimensional unit sphere $S(N_u)$,
and hence, $\dd v/\Omega_{n-d-1}$ is the uniform distribution on the sphere.

Suppose that $\eta$ is the standard Gaussian distribution of space $N_u=T_u(M)^\perp\cap T_u(\S^{n-1})$.
Then, $\eta/\Vert \eta\Vert\sim \dd v/\Omega_{n-d-1}$ and $\Vert\eta\Vert^2\sim\chi^2_{n-d-1}$ are independently distributed.
Let $\HH(v)=(\h_{ij}(v))=H(v)(G+C)^{-1}$ for short.
The determinant in $p(u,r,s,v)$ in (\ref{p}) is expanded as
\begin{align}
\label{detI+H}
\det\bigl(I_d+(s/r)\HH(v)\bigr)
 =& \sum_{e=0}^d (s/r)^e \tr_e\bigl(\HH(v)\bigr) \\
 =& \sum_{e=0}^d (s/r)^e \!\!\!\!\sum_{I:\,I\subset\{1,\ldots,d\},\,|I|=e} \det\bigl(\HH_I(v)\bigr), \qquad 
\HH_I(v)=\bigl(\h_{ij}(v)\bigr)_{i,j\in I}.
\nonumber
\end{align}
Hence, the integral with respect to $\dd v$ involved in (\ref{PU3}) is
\begin{align}
\label{EdetHI}
 \int \det\bigl(\HH_I(v)\bigr) \dd v 
=& \Omega_{n-d-1}\E\bigl[\det\HH_I(\eta/\Vert\eta\Vert)\bigr] \\
=& \frac{\Omega_{n-d-1}}{\E\bigl[\Vert\eta\Vert^e\bigr]} \E\bigl[\det\HH_I(\eta)\bigr]
= \frac{\Omega_{n-d-1}}{\E\bigl[(\chi^2_{n-d-1})^{e/2}\bigr]} \E\bigl[\det\HH_I(\eta)\bigr] \nonumber \\
=& \frac{\Omega_{n-d+e-1}}{(2\pi)^{e/2}} \E\bigl[\det\HH_I(\eta)\bigr],
\nonumber
\end{align}
and $\E\bigl[\det\HH_I(\eta)\bigr]$ is expressed by (\ref{EdetH}) with
$s_{ij,kl} = \RR_{ij}^{kl}$ in (\ref{tildeR}),
because the curvature tensor is characterized as
\begin{equation}
\label{curvature}
 R_{ij;kl} = \bigl(g_{ik}g_{jl}-g_{il}g_{jk}\bigr)
 + \E[h_{ik}(\eta)h_{jl}(\eta)-h_{il}(\eta)h_{jk}(\eta)],
\end{equation}
where $H(v)=(h_{ij}(v))$ is the second fundamental form,
and $\eta$ is a standard Gaussian vector in $N_u$
(\cite{weyl:1939}).
(\ref{curvature}) is the Gauss equation for a submanifold of the unit sphere.

On the other hand, the integral with respect to $\dd r$ and $\dd s$ in (\ref{PU3}) is a linear combination of
\begin{align}
\label{int_rs}
& \frac{1}{(2\pi)^{n/2}} \int_0^\infty \int_0^\infty \1_{\bigl\{\frac{r^2(1+\Vert\nabla\ell\Vert^2)}{r^2(1+\Vert\nabla\ell\Vert^2)+s^2}>\frac{1+\Vert\nabla\ell\Vert^2}{\sigma^2}b^2\bigr\}} r^d s^{n-d-2} (s/r)^e e^{-\frac{1}{2}\{r^2(1+\Vert\nabla\ell(u)\Vert^2) + s^2\}} \dd r\,\dd s \\
&= (1+\Vert\nabla\ell\Vert^2)^{-\frac{1}{2}(d-e+1)} \frac{1}{(2\pi)^{n/2}} \int_0^\infty \int_0^\infty \1_{\bigl\{\frac{r^2}{r^2+s^2}>\frac{1+\Vert\nabla\ell\Vert^2}{\sigma^2}b^2\bigr\}} r^{d-e} s^{n-d+e-2} e^{-\frac{1}{2}(r^2 + s^2)} \dd r\,\dd s \nonumber \\
&= (1+\Vert\nabla\ell\Vert^2)^{-\frac{1}{2}(d-e+1)} \frac{1}{2(2\pi)^{n/2}} \int_{\frac{1+\Vert\nabla\ell\Vert^2}{\sigma^2}b^2}^1 t^{(d-e-1)/2} (1-t)^{(n-d+e-3)/2} \dd t \int_0^\infty e^{-\frac{1}{2}R^2} R^{n-1} \dd R \nonumber \\
&= (1+\Vert\nabla\ell\Vert^2)^{-\frac{1}{2}(d-e+1)}
 \frac{1}{\Omega_{d-e+1}\Omega_{n-d+e-1}}
 \bar B_{\frac{1}{2}(d-e+1),\frac{1}{2}(n-d+e-1)}
   \biggl(\frac{1+\Vert\nabla\ell\Vert^2}{\sigma^2} b^2\biggr).
\nonumber
\end{align}
Combining (\ref{detI+H}), (\ref{EdetHI}), and (\ref{int_rs}),
we have (\ref{PU1}).

Moreover, the point $u$ such that the argument of $\bar B_{\frac{1}{2}(d-e+1),\frac{1}{2}(n-d+e-1)}(\cdot)$ is always greater than or equal to 1 makes no contribution to (\ref{PU1}).
Hence, the range of integration $M$ in (\ref{PU1}) can be replaced with $\Mc$.
The proof of Theorem \ref{thm:volume} is completed.
\end{proof}

\subsection{Tail probability of the maximum}
\label{subsec:proof-tail}

In this subsection, we prove Theorem \ref{thm:tube_method}.

\begin{proof}[Proof of Theorem \ref{thm:tube_method}]
The right-hand side of (\ref{PU1}) is denoted by $F(b)$.
Because $\P\bigl(\Ymax >~\cdot\bigr)$ is monotonically decreasing,
Theorem \ref{thm:volume} implies that
\[
 \1(b\ge \bc) F(b) \le \P\bigl(\Ymax > b\bigr) \le \1(b\ge \bc) F(b) + \1(b<\bc),
\]
or
\[
 -\1(b<\bc) F(b) \le \P\bigl(\Ymax > b\bigr)-F(b) \le -\1(b<\bc) F(b) + \1(b<\bc).
\]
We use (\ref{PT}).
By letting $b:=c/\Vert\xi\Vert^2$,
and taking the expectation with respect to $\Vert\xi\Vert^2\sim\chi^2_n$, 
\[
 |\P\bigl(\Xmax > c\bigr)-\E[F(c/\Vert\xi\Vert^2)]| \le (C+1)\P(c/\Vert\xi\Vert^2 < \bc)
 = (C+1)\bar G_n(c/\bc),
\]
where $C=\max_{0\le b\le \sigma_0} |F(b)|<\infty$, $\sigma_0=\max_{u\in M}\sigma(u)$.
Noting that the relationship
\[
 \E\Bigl[\bar B_{\frac{1}{2}k,\frac{1}{2}(n-k)}(c^2/\Vert\xi\Vert^2)\Bigr] = \bar G_k(c^2), \quad \Vert\xi\Vert^2\sim\chi^2_n,
\]
we see that $\E[F(c/\Vert\xi\Vert^2)]$ is $\Ptube\bigl(\Xmax > c\bigr)$ in (\ref{PT2}).
\end{proof}

\subsection{Laplace approximation}

In this subsection, we prove Theorem \ref{thm:laplace} and Corollary \ref{cor:laplace}.

\begin{proof}[Proof of Theorem \ref{thm:laplace} and Corollary \ref{cor:laplace}]
The leading term of the tube formula (\ref{PT2}) is
\begin{align*}
\Ptube\bigl(\Xmax > c\bigr)
\sim& \frac{1}{\Omega_{d+1}}
\int_M \dd u \frac{\det(I_d+CG^{-1})}{(1+\Vert\nabla\ell\Vert^2)^{\frac{1}{2}(d+1)}} \, \bar G_{d+1}\biggl( \frac{1+\Vert\nabla\ell\Vert^2}{\sigma^2} c^2 \biggr) \\
=& \frac{1}{\Omega_{d+1}}
\int_M \dd u \frac{\det(I_d+CG^{-1})}{(1+\Vert\nabla\ell\Vert^2)^{\frac{1}{2}(d+1)}} \int_{t>\frac{1+\Vert\ell\Vert^2}{\sigma^2}c^2} \dd t\,t^{\frac{1}{2}(d+1)-1} e^{-t/2} \\
=& \frac{1}{(2\pi)^{\frac{1}{2}(d+1)}}
 \int_M \dd u \det(I_d+CG^{-1})\int_{r>c} \frac{\dd r \,r^d}{\sigma^{d+1}}
 e^{-\frac{1}{2}\bigl(\frac{r}{\sigma}\bigr)^2 (1+\Vert\nabla\ell\Vert^2) }.
\end{align*}

Here, we treat the two typical cases:
(i) The maximum of the variance is attained at only $M_0=\{u_0\}\subset M$.
(ii) The minimizer of variance $M_0$ forms a $d_0$-dimensional closed manifold.

The values evaluated at $u=u_0$ are denoted by the subscript 0, i.e.,
$\sigma_0=\sigma(u_0)$, $G_0=G_{u_0}$, and $C_0=C_{u_0}$.

Case (i).
Let
\[
 q = \frac{1}{\sigma^2}(1+\Vert\nabla\ell\Vert^2) = e^{-2\ell}(1+\ell_k\ell_l g^{kl}).
\]
The derivative with respect to $t^i$ is denoted by the subscript, for example, $\ell_i=\partial\ell/\partial t^i$.
Because $\ell_i|_{u_0}=0$,
\begin{align*}
\partial_i q
=& -2 \ell_i e^{-2\ell}(1+\ell_k\ell_l g^{kl})
 + e^{-2\ell}(2\ell_{ik}\ell_l g^{kl}+\ell_k\ell_l\partial_i g^{kl}), \\
\partial_i q|_{u_0}
=& 0, \\
\partial_i\partial_j q|_{u_0}
=& -2 \ell_{ij} e^{-2\ell}
 + e^{-2\ell}(2\ell_{ik}\ell_{jl} g^{kl})
= \frac{2}{\sigma^2}(-\ell_{ij}+\ell_{ik}g^{kl}\ell_{lj}).
\end{align*}
In matrix notation,
\[
 \bigl(\partial_i\partial_j q|_{u_0}\bigr)_{1\le i,j\le d} = C_0 + C_0 G_0^{-1} C_0
\]
and about the point $u_0=u(t)|_{t=0}$,
\begin{equation}
\label{q}
 q = \frac{1}{\sigma_0^2} \bigl(1 + t^\top (C_0 + C_0 G_0^{-1} C_0) t\bigr) + o(\Vert t\Vert^2), \ \ t=(t^1,\ldots,t^d)^\top,
\end{equation}
where $u_0=\varphi(0)$.
The volume element at $u_0$ is $\dd u=\det(G_0)^{\frac{1}{2}}\prod_{i=1}^d \dd t_i$.
Therefore, according to the standard arguments of Laplace's method (e.g., \cite{erdelyi:1956}),
\begin{align*}
\Ptube\bigl(\Xmax > c\bigr)
\sim& \frac{1}{(2\pi)^{\frac{1}{2}(d+1)}}
 \det(I_d+C_0 G_0^{-1}) \int_{r>c} \frac{\dd r\,r^d}{\sigma_0^{d+1}} \,
 e^{-\frac{1}{2}\bigl(\frac{r}{\sigma_0}\bigr)^2 } \\
& \times (2\pi)^{d/2}
 \det(C_0+C_0G_0^{-1}C_0)^{-\frac{1}{2}}\det(G_0)^{\frac{1}{2}}
 \biggl(\frac{r}{\sigma_0}\biggr)^{-d} \\
\sim& \det(I_d+C_0 G_0^{-1})^{\frac{1}{2}} \det(C_0 G_0^{-1})^{-\frac{1}{2}}
  \bar\Phi\biggl(\frac{c}{\sigma_0}\biggr),
\end{align*}
where
\[
 C_0 = (-\nabla_i\nabla_j\ell(u_0))_{1\le i,j\le d}
 = (-\nabla_i\nabla_j\log\sigma(u_0))_{1\le i,j\le d}.
\] 

Case (ii).
When $\dim M_0=d_0>0$, the matrix $C+C G^{-1} C$ evaluated at $u\in M_0$ in (\ref{q}) is of rank $d-d_0$.
By redefining the local coordinates $t=(t^{i})$, we can assume that $C$ at $u$ is of the form $C = \diag(0, C_{22})$ where $C_{22}$ is $(d-d_0)\times(d-d_0)$.
Accordingly, we write
\[
  G = \begin{pmatrix} G_{11} & G_{12} \\ G_{21} & G_{22} \end{pmatrix}, \ \ \mbox{and let}\ \ G_{22\cdot 1}=G_{22}-G_{21}G_{11}^{-1}G_{12}.
\]
Then, for $u\approx u_0=u(t)|_{t=0}\in M_0$,
\[
 q(u) = \frac{1}{\sigma_0^2} \bigl(1 + \bar t^\top (C_{22} + C_{22} G_{22\cdot 1}^{-1} C_{22})|_{u_0} \bar t\bigr) + o(\Vert \bar t\Vert^2), \ \ \bar t=\bigl(t^{d_0+1},\ldots,t^d\bigr)^\top,
\]
\[
 \dd u = \dd u_0 \prod_{i=d_0+1}^{d} \dd t^i \det(G_{22\cdot 1})^{\frac{1}{2}}, \quad \dd u_0=\det(G_{11})^{\frac{1}{2}}\prod_{i=1}^{d_0} \dd t^i,
\]
and
\begin{align*}
\Ptube\bigl(\Xmax > c\bigr)
\sim& \frac{1}{(2\pi)^{\frac{1}{2}(d+1)}}
 \int_{M_0} \dd u_0 \det(I_{d-d_0}+C_{22}G_{22\cdot 1}^{-1})\int_{r>c} \frac{\dd r\,r^d}{\sigma_0^{d+1}} 
 e^{-\frac{1}{2}\bigl(\frac{r}{\sigma_0}\bigr)^2 } \\
& \times (2\pi)^{\frac{1}{2}(d-d_0)}
 \det(C_{22}+C_{22}G_{22\cdot 1}^{-1}C_{22})^{-\frac{1}{2}}\det(G_{22\cdot 1})^{\frac{1}{2}}
 \biggl(\frac{r}{\sigma_0}\biggr)^{-(d-d_0)} \\
\sim&
 \frac{1}{\Omega_{d_0+1}}
 \int_{M_0} \dd u_0 \frac{\det(I_{d-d_0}+C_{22}G_{22\cdot 1}^{-1})^{\frac{1}{2}}}{ \det(C_{22}G_{22\cdot 1}^{-1})^{\frac{1}{2}}} \times \bar G_{d_0+1}\biggl(\frac{c^2}{\sigma_0^2}\biggr) \\
=&
 \frac{1}{\Omega_{d_0+1}}
 \int_{M_0} \dd u_0 \frac{\det(I_d+C G^{-1})^{\frac{1}{2}}}{ \tr_{d-d_0}(C G^{-1})^{\frac{1}{2}}} \times \bar G_{d_0+1}\biggl(\frac{c^2}{\sigma_0^2}\biggr).
\end{align*}
The asymptotic equivalence on the last line is due to the uniformity of the Laplace approximation on the compact set $M_0$.
\end{proof}

\subsection{Approximation error of the Bonferroni method}

In this subsection, we prove Theorem \ref{thm:bonferroni} and Proposition \ref{prop:bonferroni-validity}.
For this purpose, we will provide analogues to Theorems \ref{thm:volume} and \ref{thm:critical} when $M$ is a finite set.

\begin{proof}[Proof of Theorem \ref{thm:bonferroni}]
Recall that
$X_i=\sigma_i\langle\varphi^{(i)},\xi\rangle$,
$Y_i=\sigma_i\langle\varphi^{(i)},\xi/\Vert\xi\Vert\rangle$,
 $\xi\sim\mathcal{N}_n(0,I_n)$,
and $\rho_{ij}=\langle\varphi^{(i)},\varphi^{(j)}\rangle$.
If $b$ is sufficiently large, say $b\ge\bc$, the events $\{Y_i>b\}$ are mutually exclusive, and
\[
 \P\bigl(\max\{Y_1,\ldots,Y_K\}>b\bigr) = \sum_{i=1}^K \P(Y_i>b),
\]
which is (\ref{PU4}).

To identify the threshold $\bc$, we will follow the proof in Section \ref{subsec:proof-critical}.
Let $i$ be fixed.
Let
\[
 N_i = T_{\varphi^{(i)}}(\S^{n-1}) 
\]
be the space in $\R^n$ orthogonal to $\varphi^{(i)}$.
Almost all $z\in\R^n$ is written as
\[
 z = r \varphi^{(i)} + s v, \ \ (r,s,v)\in Q_i,
\]
where
\[
 Q_i = \{(r,s,v) \mid r>0,\ s\ge 0,\ v\in S(N_i)\}.
\]
Let
\[
 V_i = \{ z \in \R^n \mid \sigma_j\langle \varphi^{(j)},z\rangle
 < \sigma_i\langle \varphi^{(i)},z\rangle,\ z = r\varphi^{(i)} + s v,
 \ \forall j\ne i \}.
\]
Instead of (\ref{rsv}), we will find the threshold $\bc'(i)$ of $b$ satisfying 
$A_i(b)\subset B_i$ a.s., where
\begin{align*}
A_i(b) =&\, \{(r,s,v)\in Q_i \mid \sigma_i\langle \varphi^{(i)},z/\Vert z\Vert\rangle > b,\,z=r\varphi^{(i)}+sv,\ \forall j\ne i \}, \\
B_i   =&\, \{(r,s,v)\in Q_i \mid z=r\varphi^{(i)}+sv\in V_i\}.
\end{align*}
The threshold $\bc$ in Theorem \ref{thm:bonferroni} is given by $\max_{i\in I}\bc'(i)$.

As in (\ref{iii}), the set $A_i(b)$ is
\[
A_i(b)=\bigl\{ (r,s,v)\in Q_i \mid  (s/r)^2 < \sigma_i^2/b^2-1 \bigr\}.
\]

Let $I=\{1,\ldots,K\}$.
We divide the index set $I\setminus\{i\}$ as $I_i^+\sqcup I_i^-$, where
\[
 I_i^+ = \bigl\{ j\in I \mid \sigma_i/\sigma_j
   - \rho_{ij} > 0 \bigr\}, \qquad
 I_i^- = \bigl\{ j\in I \mid \sigma_i/\sigma_j
   - \rho_{ij} \le 0 \bigr\} \setminus\{i\}.
\]

(i) When $I_i^-=\emptyset$,
\begin{align*}
B_i
=&
\biggl\{ (r,s,v)\in Q_i \mid 
 \frac{s}{r} <
 \min_{j\in I\setminus\{i\}:\langle \varphi^{(j)},v\rangle > 0}\frac{\sigma_i/\sigma_j - \rho_{ij}}{\langle \varphi^{(j)},v\rangle} \biggr\}.
\nonumber
\end{align*}
Therefore, $A_i(b)\subset B_i$ iff
\begin{align*}
 \sqrt{\frac{\sigma_i^2}{b^2}-1}
<&
 \inf_{v\in S(N_i)}\min_{j\in I\setminus\{i\}:\langle \varphi^{(j)},v\rangle > 0}\frac{\sigma_i/\sigma_j - \rho_{ij}}{\langle \varphi^{(j)},v\rangle} \\
=&
 \min_{j\in I\setminus\{i\}}\inf_{v\in S(N_i):\langle \varphi^{(j)},v\rangle > 0}\frac{\sigma_i/\sigma_j - \rho_{ij}}{\langle \varphi^{(j)},v\rangle}
=
 \min_{j:j\ne i}\frac{\sigma_i/\sigma_j - \rho_{ij}}{\sqrt{1-\rho_{ij}^2}}.
\end{align*}
Here we used
\[
 \sup_{v\in S(N_i):\langle \varphi^{(j)},v\rangle>0}\langle \varphi^{(j)},v\rangle = \sqrt{1-\rho_{ij}^2}.
\]

(ii) When $I_i^-\ne\emptyset$,
\[
B_i=
\biggl\{(r,s,v)\in Q_i \mid
 \mbox{``}\langle \varphi^{(j)},v\rangle < 0\ \mbox{and}\ %
 \frac{s}{r} > \frac{- \sigma_i/\sigma_j + \rho_{ij}}
            {-\langle \varphi^{(j)},v\rangle}\mbox{''},\ \ \forall j\in I\setminus\{i\}
\biggr\}.
\]
$A_i(b)\subset B_i$ holds iff $A_i(b)=\emptyset$.
That is,
\[
 \frac{\sigma_i^2}{b^2}-1 \le 0.
\]

Combining (i) and (ii), $\bc'(i)$ is given by
\[
 \sqrt{\frac{\sigma_i^2}{\bc'(i)^2}-1} = \min_{j\in I\setminus\{i\}}\frac{(\sigma_i/\sigma_j - \rho_{ij})_+}{\sqrt{1-\rho_{ij}^2}},
\]
or equivalently,
\[
 \bc'(i)^2 = \frac{\sigma_i^2}{1+\min_{j\in I\setminus\{i\}}h(i,j)}, 
\]
where $h(i,j)$ is given in (\ref{hij}).
The threshold in Theorem \ref{thm:bonferroni} is $\bc=\max_{i\in i}\bc'(i)$.
In (\ref{PU4}), the term $\P(Y_i>b) = (1/2)\bar B_{\frac{1}{2},\frac{1}{2}(n-1)}(b^2/\sigma_i^2)$ for $\sigma_i<\bc$ is always zero and can be omitted.

Finally, we apply the proof of Theorem \ref{thm:tube_method} in Section \ref{subsec:proof-tail} to (\ref{PU4}) to prove (\ref{bonferroni}) of Theorem \ref{thm:bonferroni}.

\end{proof}

\begin{proof}[Proof of Proposition \ref{prop:bonferroni-validity}]
We evaluate
\begin{equation}
\label{(1+h)/s}
 \frac{1+h(i,j)}{\sigma_i^2} =
 \frac{1}{\sigma_i^2} \biggl(1 + \frac{(\sigma_i/\sigma_j - \rho_{ij})_+^2}{1 - \rho_{ij}^2} \biggr).
\end{equation}

(i) In the case $\rho_{ij}<1\le\sigma_i/\sigma_j$,
\[
 (\ref{(1+h)/s}) \ge \frac{1}{\sigma_i^2}\biggl(1 + \frac{(1 - \rho_{ij})^2}{1 - \rho_{ij}^2} \biggr) = \frac{1}{\sigma_i^2} \frac{2}{1+\rho_{ij}}.
\]

(ii) In the case $\rho_{ij}<\sigma_i/\sigma_j<1$,
\begin{align*}
 (\ref{(1+h)/s})
= \frac{1}{\sigma_i^2} \biggl(1 + \frac{(\sigma_i/\sigma_j-\rho_{ij})^2}{1-\rho_{ij}^2} \biggr)
=& \frac{1}{\sigma_j^2} \biggl(1 + \frac{(\sigma_j/\sigma_i-\rho_{ij})^2}{1-\rho_{ij}^2} \biggr) \\
>& \frac{1}{\sigma_j^2} \biggl(1 + \frac{(1-\rho_{ij})^2}{1-\rho_{ij}^2} \biggr)
= \frac{1}{\sigma_j^2} \frac{2}{1+\rho_{ij}}.
\end{align*}

(iii) In the case $\sigma_i/\sigma_j\le\rho_{ij}<1$,
\[
 (\ref{(1+h)/s}) = \frac{1}{\sigma_i^2} \ge \frac{1}{\sigma_j^2\rho_{ij}^2} > \frac{1}{\sigma_j^2} \frac{2}{1+\rho_{ij}}.
\]
In each case,
\[
 (\ref{(1+h)/s}) \ge \min\biggl\{\frac{1}{\sigma_i^2},\frac{1}{\sigma_j^2}\biggr\} \frac{2}{1+\rho_{ij}},
\]
and hence
\[
 \frac{1}{\bc^2} \ge \min_{i\ne j} \frac{1}{\sigma_i^2} \frac{2}{1+\rho_{ij}} \ge \biggl(\min_i \frac{1}{\sigma_i^2}\biggr)\biggl(\min_{i<j}\frac{2}{1+\rho_{ij}}\biggr).
\]
\end{proof}

\subsection*{Acknowledgments}

The authors are grateful to Robert Adler for bringing \cite{rosmarin:2015} to their attention.

\bibliographystyle{amsalpha}
\bibliography{inhomogeneous-bib.bib}

\end{document}